\newtheorem{thm}{Theorem}[section]
\newtheorem{prop}[thm]{Proposition}
\newtheorem{convention}[thm]{Convention}
\theoremstyle{definition}
\newtheorem{fact}[thm]{Fact}
\newtheorem{cor}[thm]{Corollary}
\newtheorem{Regles}[thm]{Rules}
\newtheorem{lem}[thm]{Lemma}
\newtheorem{defn}[thm]{Definition}
\newtheorem{factdef}[thm]{Fact/Definition}
\newtheorem{revision}[thm]{Revision}
\newtheorem{examples}[thm]{Examples}
\newtheorem{setup}[thm]{Set Up}
\newtheorem{claim}[thm]{Claim}
\newtheorem{subclaim}[thm]{Sub-Claim}
\newtheorem{not/clar}[thm]{Notation/Clarification}
\newtheorem{simplification}[thm]{Simplifications}
\newtheorem{defrev}[thm]{Definition/Revision}
\newtheorem{warning}[thm]{Warning}
\newtheorem{bonus}[thm]{Bonus}
\newtheorem{babyCrit}[thm]{Baby Criterion}
\newtheorem{PractCrit}[thm]{Practical Criteria}
\newtheorem{CheckList}[thm]{Check List}
\theoremstyle{remark}
\newtheorem{triviality}[thm]{Triviality}
\newtheorem{rmk}[thm]{Remark}
\numberwithin{equation}{section}
\def\dar[#1]{\ar@<2pt>[#1]\ar@<-2pt>[#1]}
\newcommand\Atop[2]{\genfrac{}{}{0pt}{}{#1}{#2}}
\begin{document}

\title{Flattening and algebraisation}
\author{Michael McQuillan}
\address{Universit\'a degli studi di Roma 'Tor Vergata' \& HSE University, Moscow}
\date{\today. Support provided within the framework of the HSE University basic research programme.}

\begin{abstract}
To, say, a proper algebraic or holomorphic space $X/S$, and a coherent sheaf ${\mathscr F}$ on $X$ we identify a functorial ideal, the fitted flatifier, blowing up sequentially in which leads to a flattening of the proper transform of ${\mathscr F}$. As such,  this is a variant on theorems of Raynaud \& Hironaka, but it's functorial nature allows its application to a flattening theorem for formal algebraic spaces or Artin champs. An immediate application is the (practical \& drop in) replacement anticipated in \cite{artin} for the definition of surjectivity therein. A more involved application is close to optimal algebraisation theorems for formal deformations. En passant, contrary to what is asserted in \cite{ega3}[Remarque 5.4.6], we give an example of an adic Noetherian formal scheme whose nil radical is not coherent and establish the equivalence conjectured therein between arbitrary algebraisability and that of the reduction.
\end{abstract}

%%%

\maketitle

\section{Introduction}\label{Intro}

A priori Raynaud's proof of flattening by way of a sequence of blow ups, \cite{ray} [Th\'eor\`eme 5.2.2], makes such an essential use of algebraicity to extend from a local to a global centre, op. cit. 5.3.1, that Hironaka, \cite{hironaka}[Introduction], reckons his proof in the analytic case substantially different. Locally, however, both employ a centre defined by a certain coefficient ideal, which is often the same, but Hironaka's analysis, \cite{hironaka}[Theorem 1.14] is more precise and allows him to identify the said centre with the functorial notion of a pointed flatifier, \thref{flatP:defn1}. Nevertheless this doesn't immediately solve the problem since the global notion of a flatifier, \thref{flatG:rev1}, is only net, so, cf. \cite{hironaka}[Example 2], there may be no global closed sub-space restricting to the desired local centre. As such Hironaka's proof of the analytic flattening theorem is by way of a non-trivial double induction, op. cit. Theorems I$_n$ \& II$_n$, in order to exploit the local calculation. A more systematic use, however, of the flatifier as developed by Grothendieck in \cite{mur} reduces this to an easy induction, \thref{end:fact2}, of a couple of lines. Specifically, in the analytic category, the flatifier is not a priori compactifiable, and one uses, inductively, the flattening theorem together with \cite{ega4}[Th\'eor\`eme 11.4.1] (which could have been written for exactly this purpose) to conclude that a compactification exists, from which one finds a global centre, the fitted flatifier, \thref{end:defn1}, whose ideal is locally a product of the ideal of the pointed flatifier (of the \'etale local components where it isn't schematically dense) with some a priori unknown stuff, but, from the point of view of flatness, this is more than adequate to improve the situation on blowing up in it.

\smallskip

At which point the principle obstruction to proving a flattening theorem for formal schemes or algebraic spaces is that it should have sense. Specifically the correct statement is, already for schemes, not that one can achieve a flattening after a sequence, say, $\rho : \widetilde S \to S$, of blow ups, but that the flatifier over $\widetilde S$ is an everywhere dense closed embedding. As such if $S$ is reduced then the flatifier over $\widetilde S$ is $\widetilde S$, but if, say, $S$ were Artinian, then $\widetilde S = S$ and there is no possible improvement. Even, however, for affine formal, and indeed adic Noetherian, schemes the nil radical needn't be coherent, \thref{cor:nil2}, and one needs to impose a condition, \thref{con:defn1}, the contrary assertion in \cite{ega3}[Remarque 5.4.6] notwithstanding, which we have called {\it consistency}, and whence it's pertinent to make,

\begin{rmk}\thlabel{rmk:intro}
All algebraic spaces, and even Deligne Mumford champs, are consistent. In the formal category however it is a non-empty condition implied by, for example, all local rings excellent, \thref{con:ex1}, which in any geometrically meaningful situation, i.e. the trace is quasi-excellent, is always true by a theorem, \cite{gabber}, of Gabber. Better still for the purpose of flattening a still weaker condition {\it consistent in a Japanese way}, \thref{con:defn1}, is all that's required.
\end{rmk}

This said we arrive to the main,

\begin{thm}\thlabel{main}
Let $S = {\rm Spf} (A)$ be an adic Noetherian formal scheme which is universally consistent in a Japanese way, and $f : {\mathcal X} \to S$ a proper map from an adic formal champ in the smooth topology, then any coherent sheaf, ${\mathscr F}$, on ${\mathcal X}$ determines an ideal, ${\mathcal I}_S ({\mathscr F})$, the fitted flatifier, \thref{end:defn1}, no associated prime of which is minimal, respecting smooth base change, i.e. if $T \to S$ is formally, cf. \thref{hensel:factdef1}, smooth,
\begin{equation}
\label{intro1}
{\mathcal I}_S ({\mathscr F}) \vert_T = {\mathcal I}_T ({\mathscr F} \vert_T) \, .
\end{equation}
As such if $f : {\mathcal X} \to {\mathcal S}$ were a proper map of adic formal champ in the smooth topology and ${\mathscr F}$ is a coherent sheaf on ${\mathcal X}$, these ideals glue to an ideal ${\mathcal I}_{\mathcal S} ({\mathscr F})$ and a blow up, in the fitted flatifier,
\begin{equation}
\label{intro2}
 \rho_{\mathcal S} : {\mathcal S}^{\#} \to {\mathcal S}
\end{equation}
with exceptional divisor $D$, and, whence, 
for ${\mathscr H}_E^0$ torsion along the same
(rather than the formal support in $D$ functor) 
a sequence of blow ups and coherent sheaves,
\begin{equation}
\label{intro3}
{\mathcal S}_0 = {\mathcal S} \, , \ {\mathscr F}_0 = {\mathscr F} \, , \ {\mathcal S}_{i+1} = {\mathcal S}_i^{\#} \, , \ {\mathscr F}_{i+1} = \rho_S^* {\mathscr F} / {\mathscr H}_D^0 (\rho_S^* {\mathscr F})
\end{equation}
such that for some $n \gg 0$, the flatifier of ${\mathscr F}_n$ over ${\mathcal S}_n$ is an everywhere closed subspace with the same reduction, or, equivalently ${\mathscr F}_n$ is flat over $({\mathcal S}_n)_{\rm red}$, \thref{end:cor1}.
\end{thm}

Here for ease of of exposition we've supposed proper, but the necessary and sufficient condition for the existence of the flatifier, \thref{flatG:fact2}, is that the {\it non-flatness of ${\mathscr F}$ is universally pure}, \thref{flatG:def2}, and when, in the discrete case, this holds at every one of the steps in \eqref{intro3}, \thref{main} holds too,  although
such inductive purity isn't easily verfied without some properness, \cite{dave}[Example 1.21]. Irrespectively, neither purity nor properness are
necessary since \thref{main} implies any sub-case where a compactification is known to exist, e.g. $f$ a quasi-compact map of finite type between algebraic spaces, \cite{conrad}. In
the formal case, however, the matter is more subtle and it's appropriate to spell out the
issue:

\begin{rmk}\thlabel{rmk:intro2}
Just as in \cite{hironaka}, the openness of flatness is key to building the centres in \thref{main}, which is awkward since this is more naturally a statement about schemes rather than formal schemes, cf. \thref{flatG:lem1}--\thref{flatG:cor1}. Locally, on $\mathcal{X}$, this is a trivial variant of an argument of Reinhardt Kiehl, \cite{kieh}, in the holomorphic setting, which in turn is the only known proof for analytic sheaves -- critically Lemma IV.10 of the usual citation \cite{rubbish} asserts what has to be proved, i.e. middle of the proof, notation therein, $B(K,{\mathscr C})$ is acyclic. Similarly, the literature on purity, although not as bad as that on analytic flatness, could be better since if ${\mathscr F}$ were a pure module then its non-flatness would be pure too, \thref{A:formalpureT}, but there are plenty, \thref{flatG:rmk1}, of flat modules which aren't pure. Irrespectively, in the discrete case, purity translates the closed non-flat locus on $\mathcal{X}$ into a closed subspace of $S$, but exactly how to do this in the formal case without supposing the said locus proper is far from clear. 
\end{rmk}

All of this said, putting some functorial order to the literature on flattening was a by product of our motivation, namely, algebraisation of formal deformations of foliated surfaces of general type within a larger project of a Deligne-Mumford theorem for their moduli. Normally, for varieties of general type, this is an easy consequence, \cite{ega3}[Th\'eor\`eme 5.4.5] of the existence of canonical models, since they naturally have an ample line bundle. However, this is by no means the case for foliations, \cite{canmod}[Corollary IV.2.3], where one needs something like,

\begin{thm}\thlabel{two}(\thref{alg:real1})
Let ${\mathfrak X} \to S$ be a proper adic formal algebraic space over an adic Noetherian formal scheme which is universally consistent in a Japanese way, \thref{con:defn1}, and  for which 
every irrdeucible component ${\mathfrak X}_i$ admits
a non-empty $S$-flat  open subspace $U_i \hookrightarrow {\mathfrak X}$ 
together with a line sheaf ${\mathscr L}_i$
such that,

(a) On some, and whence any, modification, \thref{A:mod}, $\varphi_i : \widetilde{\mathfrak X}_i \to {\mathfrak X}_i$ with exceptional 
locus $E$, $\varphi_i^* \, {\mathscr L}_i$ modulo $E$-torsion is a nef line bundle $L_i$;

(b) For $X_i \hookrightarrow {\mathfrak X}_i$ the trace, the map afforded by global sections,
\begin{equation}
\label{aaalg3}
\mathrm{H}^0 (X_i , {\mathscr L}_i \mid_{X_i}) \otimes_{{\mathscr O}_{S_0}} {\mathscr O}_{X_i} \longrightarrow {\mathscr L}_i
\end{equation}
embeds a generic point of $X_i\cap U_i$;

\noindent then ${\mathfrak X}$ is algebraisable, i.e. it is the completion of a proper algebraic space $X/S$.
\end{thm}

The technique isn't to count sections, but their growth, \thref{setup:count1}, modulo powers of the maximal ideal, which, arguably gives, en passant, alternative and less ``schemy'' proofs of the invariance of the Euler characteristic, and upper semi-continuity theorem, \thref{count:fact2}, for the cohomology of coherent sheaves. It is, therefore, wholly necessary to first establish \thref{alg:lem0}, that a space is algebraisable iff its reduction is, as conjectured in \cite{ega3}[Remarque 5.4.6].

In the same vein, and indeed as a much more immediate corollary of \thref{main},
the Stein fatorisation theorems of \cite{ega3}[\S 4.3] can now be made to work
for formal schemes, so, as wholly anticipated in \cite{artin}[Remarks, post 1.7],
op. cit.'s (often difficult to verify) definition of surjectivity reduces to what
it should be,

\begin{thm}\label{three}
Let $f:{\mathfrak X} \to S$ be a proper adic formal algebraic space over an adic Noetherian formal scheme which is universally consistent in a Japanese way, \thref{con:defn1}, then $f$ is surjective in
the sense of Artin, \thref{A:mod}, iff $\mathscr{O}_{\mathcal{S}_{\mathrm{red}}}\to f_*\mathscr{O}_{\mathfrak{X}_{\mathrm{red}}}$ is injective.
\end{thm}
\smallskip

The manuscript was written while visiting Bures in February 2024, but its typesetting is the careful and precise work of C\'ecile, while Callum Spicer and David Ryhd made invaluable contributions to correcting some appalling errors.

%\tableofcontents

%\end{document}

%%%%%%%%%%%%%%%

\vglue 1cm

\section{The flatifier in general}\label{Flatifier}

We'll require to generalise from finite type in the discrete
sense to finite type in the formal sense, \thref{formal:rev1}, 
various well-known results beginning with,

\begin{lem}\thlabel{flatG:lem1}
Let $S = {\rm Spec} \, (A)$ be the spectrum of an $I$-adically complete Noetherian ring and $f : X = X^n_S \to S$ the spectrum of the ring of restricted $n$-dimensional formal power series, \thref{formal:rev1}, then for any coherent sheaf, $\mathscr{F}$, of $\mathscr{O}_X$-modules the set,
\begin{equation}
\label{flatG.1}
\{x \in X \mid \mathscr{F}_x \ \text{is} \ \mathscr{O}_{S,f(x)} \ \text{flat}\}
\end{equation}
is open in $X$.
\end{lem}

\begin{proof}
Following \cite{kieh}, immediately after his lemma 3, consider the diagram,
\begin{equation}
\label{flatG.2}
\xymatrix{
&X_S^{2n} \ar[ld]^-{p_2} \ar[dd]^{p_1} &&X \ar@{_{(}->}[ll]^-{\Delta_{X/S}} \\
X = X_S^n \ar[dd]_f \\
&X = X_S^n \ar[dl]^f \\
S
}
\end{equation}
where all of $f , p_1 , p_2$ are flat by \cite{matsumura}[Theorem 22.3], 
and \eqref{flatG.1} is topologically fibred, so, by \thref{flatG:rule3},
$$
\mathscr{F}_x \ \text{is} \ \mathscr{O}_{S,f(x)} \ \text{flat} \Longleftrightarrow (p_2^* \mathscr{F})_{\Delta (x)} \ \text{is} \ \mathscr{O}_{X,x} \ \text{flat}
$$
wherein the respective $\mathscr{O}_{X,x}$ module structures are by way of $p_1^*$. As such we may immediately conclude by \cite{kieh}[Satz 1].
\end{proof}

Which offers the immediate,

\begin{cor}\thlabel{flatG:cor1}
Let everything be as in \thref{flatG:lem1} but suppose more generally that $f : X = {\rm Spec} \, B \to S$ is only embedded in some $X_S^n$, i.e. $B/A$ is (topologically) of finite type, \thref{formal:rev1}, then the same conclusion holds.
\end{cor}

This suggests that we introduce the following,

\begin{defn}\thlabel{flatG:def1}
Let $B$ be of (topologically) finite type over an $I$-adically complete Noetherian ring $A$, and $M$ a finitely generated $B$ module, then $\Theta_{B/A} (M)$ is the ideal of the closed (reduced) subset complementary to the open set \eqref{flatG.1} for the $\mathscr{O}_S$-module $\mathscr{F}$ associated to $M$. Alternatively, therefore, if we employ the geometric notation of \thref{flatG:lem1} then we will write $\Theta_f (\mathscr{F})$ for the resulting sheaf of ideals.
\end{defn}

Observe that the ideals $\Theta$ satisfy,

\bigskip

\begin{Regles}\thlabel{flatG:rule1}
If,
\begin{equation}\label{flatG.3}
\xymatrix{
X \ar[rd]_h \ar[rr]_g &&Y \ar[ld]^f \\
&S
}
\end{equation}
is a commutative diagram of Noetherian affine schemes, wherein $\Gamma(X)$, resp. $\Gamma(Y)$, are (topologicaly) of finite type, \thref{formal:rev1}, over an $I$-adically complete Noetherian ring, $\Gamma(S)$, and $\mathscr{F}$ is a coherent sheaf of $\mathscr{O}_Y$-modules, then if $g$ is flat,
\begin{equation}
\label{flatG.4}
\sqrt{g^* \Theta_f (\mathscr{F})} = \Theta_h (g^* \mathscr{F}) \, .
\end{equation}
\end{Regles}

\begin{Regles}\thlabel{flatG:rule3}
If
\begin{equation}
\begin{CD}\label{flatG.444}
Y@<<{g}< X \\
@V{f}VV @VV{h}V \\
S@<{g}<< T
\end{CD}
\end{equation}
is a commutative diagram of Noetherian affine schemes, wherein $\Gamma(X)$, resp. $\Gamma(Y)$, are (topologically) of finite type, \thref{formal:rev1}, over an $I$-adically complete Noetherian ring, $\Gamma(S)$, and $\mathscr{F}$ is a coherent sheaf of $\mathscr{O}_Y$-modules, then if $g$ is flat, and \eqref{flatG:rule3} is topologically
fibred,
\begin{equation}
\label{flatG.44}
\sqrt{g^* \Theta_f (\mathscr{F})} = \Theta_h (g^* \mathscr{F}) \, .
\end{equation}
\end{Regles}

\begin{proof}
To begin with in \thref{flatG:rule1}, $\mathscr{F}$ is coherent, so,
\begin{equation}\label{base1}
g^* \mathscr{F}:= \mathscr{O}_X{\otimes}_{\mathscr{O}_Y}\mathscr{F}
= \mathscr{O}_X\widehat{\otimes}_{\mathscr{O}_Y}\mathscr{F}
\end{equation}
while, \thref{fact:flatR1}, flatness and topological flatness coincide for coherent modules,
 so $\Theta_h (g^* \mathscr{F})$ $\hookleftarrow g^* \Theta_f (\mathscr{F})$ in \eqref{flatG.3}. Conversely if for some points $y = g(x)$, lying over $s$, $\mathscr{F}_y$ is not a flat $\mathscr{O}_{S,s}$ module, then there is an ideal $J$ of the latter and a non-zero module $K$ such that,
\begin{equation}
\label{flatG.7}
0 \to K \to J \otimes_{\mathscr{O}_{S,s}} \mathscr{F}_y \twoheadrightarrow J \mathscr{F}_y \hookrightarrow \mathscr{F}_y
\end{equation}
is exact. We can however pull this back along the faithfully flat map $\mathscr{O}_{Y,y} \to \mathscr{O}_{X,x}$ to obtain,
\begin{equation}
\label{flatG.8}
0 \to K \otimes_{\mathscr{O}_{Y,y}} \mathscr{O}_{X,x} \ne 0 \to J \otimes_{\mathscr{O}_{S,s}} 
g^*\mathscr{F}_x  \hookrightarrow g^*\mathscr{F}_x
\end{equation}
so $\mathscr{F}_x$ isn't flat either. 

As to \thref{flatG:rule3}, since \eqref{flatG.444} is topologically fibred, the
upper horizontal is topologically flat, so it's flat tout court. Consequently, the
left hand side of \eqref{flatG.44} is equally,
\begin{equation}\label{onus2}
\Theta_{fg=gh} (g^* \mathscr{F}), 
\end{equation}
and the right hand side of \eqref{flatG.44} is included in the left. The
other direction is stability of (discrete) flatness under topological base change, 
and needs a little work. Specicially, 
for $\mathscr{F}$, resp. $\mathscr{H}$,
coherent on $Y$, resp. $T$,
\begin{equation}\label{base2}
%\begin{split}
g^* \mathscr{F}{\otimes}_{\mathscr{O}_T}\mathscr{H}
=g^* \mathscr{F}{\otimes}_{\mathscr{O}_X}h^*\mathscr{H} 
=g^* \mathscr{F}\widehat{\otimes}_{\mathscr{O}_X}h^*\mathscr{H}
=f_*\mathscr{F}\widehat{\otimes}_{\mathscr{O}_S}g_*\mathscr{H}
%\end{split}
\end{equation}
which, more correctly, should be understood at the level of global sections
rather than sheaves.
In particular, continuing with the same abuse of notation, 
for $\mathscr{H}'\hookrightarrow \mathscr{H}$, any inclusion of
coherents on $T$, we have exact sequences,
\begin{equation}\label{base3}
\begin{split}
0&\rightarrow\widehat{\mathscr{N}}
\rightarrow g^*\mathscr{F}\widehat{\otimes}_{\mathscr{O}_T}\mathscr{H}'
\rightarrow
g^* \mathscr{F}\widehat{\otimes}_{\mathscr{O}_T}h^*\mathscr{H}\\
0&\rightarrow{\mathscr{N}}
\rightarrow g^*\mathscr{F}{\otimes}_{\mathscr{O}_T}\mathscr{H}'
\rightarrow
g^* \mathscr{F}{\otimes}_{\mathscr{O}_T}h^*\mathscr{H}
=f_*\mathscr{F}{\otimes}_{\mathscr{O}_S}g_*\mathscr{H}
\end{split}
\end{equation}
in which the topologically coherent sequence of $\mathscr{O}_X$ modules 
on the top line is the completion
of, the very far from coherent, sequence of $\mathscr{O}_T$ modules on the bottom, so
we need \cite{formal}[Claim 2.6] to make sense of this. In any case, 
for 
$Q$ a prime ideal of  $\Gamma(\mathscr{O}_Y)$ lying over
a prime ideal $P$ of  $\Gamma(\mathscr{O}_S)$, flatness
of $\mathscr{F}_Q$ over $\mathscr{O}_{S,P}$ is equivalent
to anything in $\mathscr{N}$ being annihilated by something
in the complement of $Q$. Equally for any finite 
$\Gamma(\mathscr{O}_X)$ module the sub-module of
elements annihilated by an element 
of  $\Gamma(\mathscr{O}_Y)\backslash Q$
is closed, so once this is true of $\mathscr{N}$,
it's true of $\widehat{\mathscr{N}}$ too.
As such, to deduce that
$g^*\mathscr{F}$ is a flat map of local rings at any prime $Q'$
lying over $P'$ in   $\Gamma(\mathscr{O}_T)$, we
just take $\mathscr{H}'\hookrightarrow\mathscr{H}$ to be the
inclusion of the Zariski closure of an ideal in $\Gamma(\mathscr{O}_T)_{P'}$ 
in the structure sheaf.
\end{proof}

En passant we have, usefully, picked up

\begin{bonus}\thlabel{flatG:bonus1}
The various hypothesis in \thref{flatG:rule1} and \thref{flatG:rule3} that $\Gamma(X)$, resp. $\Gamma (Y)$, was of finite type, \thref{formal:rev1}, over the $I$-adically complete Noetherian ring $\Gamma (S)$ or $\Gamma (T)$ were only used to guarantee that $\Theta_f (\mathscr{F})$ etc. existed. Alternatively, however, if for example, in \thref{flatG:lem1} we simply denote by $Z_f (\mathscr{F})$ the complement of \eqref{flatG.1} then we've actually established, independently of such hypothesis,
\begin{enumerate}
\item[(1)] In the situation of \eqref{flatG.3}: if $Z_f (\mathscr{F})$ is closed, and $g$ is flat, then $Z_h (g^* \mathscr{F}) = g^{-1} Z_f (\mathscr{F})$.
\item[(2)] Similarly, in the situation of \eqref{flatG.444}:if $Z_f (\mathscr{F})$ is closed, and $T\to S$ is flat, then $Z_h (g^* \mathscr{F}) = g^{-1} Z_f (\mathscr{F})$.
\end{enumerate}
\end{bonus}

Plainly if such considerations allow us to conclude that $Z_f (\mathscr{F})$ is closed then we continue to denote the resulting ideal by $\Theta_f (\mathscr{F})$. As such,

\begin{fact}\thlabel{flatG:fact1}
Let $f : {\mathcal X} \to {\mathcal S}$ be a map of 
finite type, \thref{formal:rev1}, between formal champs in
in the smooth topology 
with $\mathcal{X}$ consistent in a Japanese way, \thref{con:defn1}, 
then for any coherent sheaf $\mathscr{F}$ on $\mathcal{X}$ there is a coherent sheaf of ideals $\Theta_f (\mathscr{F})$ such that for any commutative, in which the square is fibred, diagram:
\begin{equation}
\label{flatG.12}
\xymatrix{
\mathcal{X} \ar[d]_f &\mathcal{X}_S \ar[l]^{\sigma} \ar[d]^{\varphi} &X \ar[l]^{\xi}\\
\mathcal{S} &S \ar[l]^{\sigma}
}
\end{equation}
with $\xi , \sigma$ flat, and $X$, $S$ disjoint unions of affines,
\begin{equation}
\label{flatG.412}
\sqrt{\sigma^* \Theta_f (\mathscr{F})}
=\Theta_\phi (\sigma^*\mathscr{F}),\quad
\sqrt{\xi^*\sigma^* \Theta_f (\mathscr{F})}
=\Theta_{\phi\xi} (\xi^*\sigma^*\mathscr{F}).
\end{equation}
In particular, therefore, \thref{flatG:rule1} and \thref{flatG:rule3} hold for arbitrary
diagrams of champ.
\end{fact}
\begin{proof}
Starting from \eqref{flatG.412} the in particular is a straightforward diagram chase;
op. cit. itself, however is a litlle fastidious since a composition of non-flat can
be flat. As such, consider first the case of \eqref{flatG.412} where $\mathcal{X}$
is a disjoint union $U/S$ of formal affinoids, and $\mathcal{S}$ the classifier of a groupoid 
$(s,t):R\rightrightarrows S$, then the fibre square in \eqref{flatG.12} becomes,
\begin{equation}
\label{flatG.413}
\xymatrix{
U \ar[d]_f & \,\, R_U:=R_{s}\times_S U \ar[l]^{\sigma} \ar[d]^{\varphi} \\
[S/R] &S \ar[l]^{\sigma}
}
\end{equation}
As such, the good candidate is $\Theta_\phi (\sigma^*\mathscr{F})$ which 
could be very different from $\Theta_{U/S}$. Consequently in the first 
instance we need to descend this to $U$, but this is clear from the
source and sink in, $R_U\times_U R_U\rightrightarrows R_U$ flat and
\thref{flatG:rule1}. After which, we need independence of the presentation
in \eqref{flatG.413}, so, say $S'\to S$ a smooth (flat would do if 
$\mathcal{S}$ were universally consistent in a Japanese way) refinement, then for $U'$
the fibre over $U$, we have a diagram of fibred squares,
\begin{equation}
\label{flatG.414}
\xymatrix{
R_U \ar[d] & \,\, R_{U'}:=R_{s}\times_S U' \ar[l]^{} \ar[d]^{} & R'_{U'}
:=R'_s\times_{S'} U' \ar[l]\ar[d]\\
R   \ar[d] & R_{S'} \ar[l]^{}\ar[d] &R':=R\times_{S\times S} {S'\times S'}\ar[l] \\
S &S' \ar[l]^{}
}
\end{equation}
in which all the horizontals are flat, and we need to compare $\Theta_{R_U/S}$
with $\Theta_{R'_{U'}/S'}$. However by \thref{flatG:rule3}, the former pulls
back to $\Theta_{R_{U'}/S'}$, which, in turn, pulls back to the latter by
\thref{flatG:rule1}.
Now, given this case, the well definedness of $\Theta_f$, for $f$ an
arbitrary map of champ, is immediate from \thref{flatG:rule1}, and 
any refinement of $U$ in \eqref{flatG.413}. Similarly, there's nothing
to be done to get the first part of \eqref{flatG.412} from 
the well definedness in \eqref{flatG.413}, while again, \thref{flatG:rule1}
gives the rest.
\end{proof}

Now while one needs that $U$ in \eqref{flatG.12} is consistent in a Japanese way to guarantee that $\Theta_f$ is a coherent sheaf of ideals, or equivalently, the closed complements to \eqref{flatG.1} admit, globally, a reduced formal scheme structure, this is automatic in the discrete, a.k.a. scheme, case so,

\begin{rmk}\thlabel{flatG:rmkE1}
Even when $\mathcal{X}$ of \eqref{flatG.12} is not consistent in a Japanese way, \thref{con:defn1}, it still follows,in the notation of \eqref{flatG:bonus1}, that we have an identity of closed subsets
of the trace,
\begin{equation}
\label{flatG.E1}
s^{-1} Z_f ({\mathscr F} \vert_U) = t^{-1} Z_f ({\mathscr F} \vert_U) = Z_f ({\mathscr F} \vert_R)
\end{equation}
and this, i.e. a well defined trace, is, all we need for \thref{flatG:def2} to have sense, albeit when we come to flattening by blowing up, \S \ref{S:end}, we will need consistency hypothesis, but this is on the image of \eqref{flatG.E1} in $S$, rather than on ${\mathcal X}$ itself.
\end{rmk}

In any case, for convenience let us recall, \cite{mur}[\S3. Thm 2], \cite{ray}[4.1.2],

\begin{revision}\thlabel{flatG:rev1}
Let $f : \mathcal{X} \to S = {\rm Spf} (A)$ be a formal champ in the smooth topology over a Noetherian affinoid base and $\mathscr{F}$ a coherent sheaf on $\mathcal{X}$ then on the category, ${\rm Aff}/S$, of affine formal $S$-schemes, the flatifier, $F_{\mathscr{F}}$, is the functor,
\begin{equation}
\label{flatG.15}
F_{\mathscr{F}} (T) = \left\{\begin{matrix}
\{\emptyset \} &\text{if} \ \mathscr{F}_T/T \ \text{is flat}, \hfill \\
\emptyset &\text{otherwise.} \hfill
\end{matrix}\right.
\end{equation}
\end{revision}

Closely related to this is,

\begin{defn}\thlabel{flatG:def2}
Let everything be as in \thref{flatG:rev1} and denote by $\vert Z \vert_f(\mathscr{F})$ the points $x$ of $\mathcal{X}$ where $\mathscr{F}_x$ fails to be a flat $A$-module then we say that {\it non-flatness of $\mathscr{F}$ is pure} 
if $\vert Z \vert_f (\mathscr{F})$, resp. $f(\vert Z \vert_f (\mathscr{F}))$,
are closed in the trace of $\mathcal{X}$, resp. $S$.
Similarly,
we say that {\it non-flatness of $\mathscr{F}$ is universally pure} if non-flatness of every base change of $\mathscr{F}$ in ${\rm Aff}/S$ is pure.
\end{defn} 

As one might  imagine this is exactly what is required to establish,

\begin{fact}\thlabel{flatG:fact2}
Let everything be as in \thref{flatG:rev1}, and suppose further that $f$ is quasi-compact of finite type, 
\thref{formal:rev1}, then $F_{\mathscr{F}}$ is representable by a formal $S$-scheme if the non-flatness of $\mathscr{F}$ is universally pure, while, conversely, universal purity in
the sense of \eqref{flatG:def2}, is necessary for representability, and, irrespective of the hypothesis on $f$, should the functor be representable then $F_{\mathscr{F}} \to S$ is of finite type and everywhere net.
\end{fact}

\begin{proof}
In the first instance consider the discrete case, i.e. $S$ a scheme,
then, \cite{mur}[Theorem 2], we begin with the equivalence of the
purity condition of \thref{flatG:def2} with the algebrisation axiom,
(F3) of op. cit., i.e. \eqref{flatG.17} below. Specifically, if (F3) holds then
$F_{\mathscr{F}} \to S$ is representable. 
If, however,
$f$ is of finite type, and the non-flatness of $\mathscr{F}$ is not universally pure, 
then, after an implicit base change, by \thref{flatG:rmkE1},  $\vert Z \vert_f (\mathscr{F})$ is closed 
but the constructible set $f(\vert Z \vert_f (\mathscr{F}))$
is not.
In particular if $s \in \overline{f(\vert Z \vert_f (\mathscr{F}))}\backslash 
f(\vert Z \vert_f (\mathscr{F}))$, 
then, the completion, $\widehat{\mathscr{F}}$ on $\widehat{\mathcal{X}}_s$, of $\mathcal{X}_s$ in the $\mathfrak{m}(s)$-adic topology is flat over the completion $\widehat A$ in the same. Consequently, there is a map,
\begin{equation}
\label{flatG.16}
{\rm Spec} \, \widehat A \longrightarrow F_{\mathscr{F}}
\end{equation}
while by items (1) and (2) of \thref{flatG:bonus1}, $\mathscr{F}$ isn't flat at every prime of $\widehat A$ lying over $f(\vert Z \vert_f (\mathscr{F}))$, so the existence of the map \eqref{flatG.16} is false.

Suppose conversely that the non-flatness of $\mathscr{F}$ is universally pure,
then we require an isomorphism,
\begin{equation}
\label{flatG.17}
F_{\mathscr{F}} (\varprojlim R/\mathfrak{m}^n) \xrightarrow{ \ \sim \ } \varprojlim F_{\mathscr{F}} (R/\mathfrak{m}^n)
\end{equation}
To this end, base changing as necessary, we may suppose that $A = \widehat A$ is also $\mathfrak{m} (s)$ complete in the notation immediately preceding \eqref{flatG.16}, while the right hand side of \eqref{flatG.17} is non-empty, so, by \thref{flatG:bonus1}, $\mathcal{X}_s \cap \vert Z \vert_f (\mathscr{F})$ is empty. As such, by \thref{flatG:def2} for each of the finitely many generic points $\zeta$ of $\vert Z \vert_f (\mathscr{F})$, $s \notin \overline{f(\zeta)}$, and $\widehat A = A$ is a local ring so $\mathscr{F}$ is flat.

The passage from the discrete case to the formal case is straightforward. 
Specifically for each $n\geq 0$, let $S_n$ be the spectrum of $A/I^{n+1}$,
$\mathscr{F}_n:=\mathscr{F}\mid_{S_n}$,
then we have flatifiers,
\begin{equation}
\label{flatG.101}
F_n:=F_{\mathscr{F}_n}\to S_n,\quad F_n=F_{n+1}\times_{S_{n+1}} S_n
\end{equation}
and, of course, $\mathscr{F}_T$ is flat iff $\mathscr{F}_{T_n}$ is flat for all $n$,
so the formal champ,
\begin{equation}
\label{flatG.102}
\varinjlim_{n} F_n\longrightarrow S
\end{equation}
certainly represents $F_{\mathscr{F}}$. Better still, by \eqref{flatG.101},
it satisfies the conditions of \thref{fact:quasi4}, so it's of
finite type, and it's net because any finite type map is net as
soon as its trace is, \thref{fact:quasi3}.
\end{proof}

By way of clarity it's therefore appropriate to note, 

\begin{rmk}\thlabel{flatG:rmk1}
Plainly if $f : \mathcal{X} \to S$ is proper then, by \thref{flatG:cor1}, the non-flatness of $\mathscr{F}$ is universally pure. Slightly less obviously, \thref{A:formalpureT}, if $\mathscr{F}$ is formally universally pure then its non-flatness is universally pure. The converse is, however, manifestly false, e.g. just remove a fibre of something flat. 
\end{rmk}

In any case, in the discrete case, the purity of non-flatness is the same thing as 
$f(Z_f(\mathscr{F}))$ closed, but exactly what this means in the general case without
some properness assumption is far from clear. As such we make,

\begin{defn}\thlabel{flatG:defn100}
Let everything be as in \thref{flatG:fact1}, with $Z_f(\mathscr{F})\hookrightarrow\mathcal X$
the closed sub-champ defined by $\Theta_f(\mathscr{F})$ of op. cit., then for 
$Z_f(\mathscr{F})/S$ proper we deonote by $\mathfrak{Z}(\mathscr{F})\hookrightarrow S$,
or just $\mathfrak{Z}$ if there is no confusion, the closed formal subscheme defined
by the kernel of,
\begin{equation}\label{flatG.103}
\mathscr{O}_S\rightarrow f_* \mathscr{O}_{Z_f(\mathscr{F})}
\end{equation}
\end{defn}

Irrespectively, to draw global consequences from \thref{flatG:def2}, it suffices to observe,

\begin{rmk}\thlabel{flatG:rmk2}
By its very definition, \thref{flatG:rev1}, if $S' \to S$ is in ${\rm Aff}/S$ and $\mathscr{F}'$ the base change of $\mathscr{F}$ to $S'$ then,
\begin{equation}
\label{flatG.19}
F_{\mathscr{F}'} = (F_{\mathscr{F}}) \times_S S' \, .
\end{equation}
Better still if there is a map to $F_{\mathscr{F}}$ it is, by \eqref{flatG.15}, unique.
\end{rmk}

Applying this we therefore obtain,

\begin{cor}\thlabel{flatG:cor2}
Let everything be as in \thref{flatG:fact1} with $f$ quasi compact then there is a finite type, \thref{formal:rev1},  representable net map $F_{\mathscr{F}} \to {\mathcal S}$ representing the $1$-functor on champs over ${\mathcal S}$ given by,
\begin{equation}
\label{flatG.20}
T \mapsto \left\{\begin{matrix}
\{\emptyset \} &\text{if} \ \mathscr{F}_{T}/T \ \text{is flat}, \hfill \\
\emptyset &\text{otherwise} \hfill
\end{matrix}\right.
\end{equation}
if $f$ is proper, or more generally $\mathscr{F}$ is pure, or, more generally still the closed subsets of the trace defined by the ideals $\Theta_f (\mathscr{F})$ of \thref{flatG:def1} are universally pure.
\end{cor}

\begin{proof}
By \thref{flatG:rmk2} the functors $F_{\mathscr{F}}$ of \thref{flatG:rev1} globalise to a sheaf on ${\mathcal S}$, so $F_{\mathscr{F}}$ is representable by a representable map iff it is so everywhere locally. The necessary and sufficient condition for this, \thref{flatG:fact2}, that the non-flatness of $\mathscr{F}$ is universally pure simplifies, in the presence of the existence of the ideals $\Theta_f$, to the universal purity of an irreducible component of their support.
\end{proof}

%\tableofcontents

%\end{document}

%%%%%%%%%%%%%%%

\vglue 1cm

\section{The pointed flatifier}\label{PointFlatifier}

The essential local description of the flatifier has been provided by Hironaka in which the starting point is a trivial variant of \cite{hironaka}[Thm 2.1], to wit:

\begin{lem}\thlabel{flatP:lem1}
Let $A$ be a complete local Noetherian ring with maximal ideal ${\mathfrak m}$ and infinite residue field $k=k({\mathfrak m})$ with $M$ a finitely generated module over $B = A [[T_1 , \cdots , T_n]]$ then, possibly after an $A$-linear change of coordinates $T_i$, if $B_m = A [[T_1 , \cdots , T_m]]$, $1 \leq m \leq n$, there are $B_m$ linear maps $\theta_{m\ell} : B_m \to M$, $1 \leq \ell \leq \nu_m$ such that,
\begin{equation}
\label{flatP.1}
\theta := \coprod_{1 \leq m \leq n} \ \coprod_{1 \leq \ell \leq \nu_m} \theta_{m\ell} : \coprod_{1 \leq m \leq n} B_m^{\oplus \nu_m} \to M
\end{equation}
is surjective with kernel contained in the $A$-module generated by ${\mathfrak m}$.
\end{lem}

\begin{proof}
The case $n=0$ is Nakayama's lemma, and we proceed by induction on the same. Similarly, in a minor variant of Nakayama, we may suppose that we have a presentation,
\begin{equation}
\label{flatP.2}
0 \to K \to E := B^{\oplus e} \to M \to 0
\end{equation}
which doesn't factor through any vector bundle quotient of $E/{\mathfrak m}$ modulo ${\mathfrak m}$. Equally plainly, for any vector bundle quotient $E \twoheadrightarrow E''$ we get a diagram,
\begin{equation}
\label{flatP.3}
\xymatrix{
&0 \ar[d] &0 \ar[d] &0 \ar[d] \\
0 \ar[r] &K' \ar[r] \ar[d] &E' \ar[r] \ar[d] &M' \ar[r] \ar[d] &0 \\
0 \ar[r] &K \ar[r] \ar[d] &E \ar[r] \ar[d] &M \ar[r] \ar[d] &0 \\
0 \ar[r] &K'' \ar[r] \ar[d] &E'' \ar[r] \ar[d] &M'' \ar[r] \ar[d] &0 \\
&0 &0 &0
}
\end{equation}
In particular, therefore, if we take $E''$ to be a quotient of maximal rank such that $K''$ is zero in $M''$ modulo ${\mathfrak m}$, it will suffice to prove \thref{flatP:lem1} for $M'$. Similarly if there is a sub-bundle $E'$ such that $K'$ is zero $\mod {\mathfrak m}$ it will suffice to prove op. cit. for $M''$. Thus, without loss of generality, we may suppose that for every diagram of the form \eqref{flatP.3} everything in the left hand column has non-zero image in the middle column mod ${\mathfrak m}$. 

As such by induction on the rank $e$ of $E$, without loss of generality, there is a function $f \in B$ which is non-zero modulo ${\mathfrak m}$ such that the support of $M$ is contained in the hypersurface $f=0$.  Indeed the case $e=1$ is the definition of $K$, \eqref{flatP.3}, not contained in ${\mathfrak m} E$, and the induction is automatic as soon as $K',K''$ in op. cit. have the same property. Now let $s$ be the order of such a $f$ modulo ${\mathfrak m}$, and consider the leading term,
\begin{equation}
\label{flatP+1}
f_s := \sum_{\vert I \vert = s} a_I \, T^I \in k [T_1 , \cdots , T_n]
\end{equation}
then we have  a wholly algebraic situation, and, since $k$ is infinite, a generic $k$-projection of $f_s = 0$ is finite over its image, i.e. after a generic change of coordinates the coefficient of $T_n^s$ in \eqref{flatP+1} is non-zero. As such the preparation theorem of \cite{bourbaki}[VII.3.8, Prop. 6] applies to conclude that,
\begin{equation}
\label{flatP+2}
B_{n-1} \longrightarrow B_n = B/f
\end{equation}
is finite, so $M$ is finite over $B_{n-1}$ and we conclude by induction on $n$.
\end{proof}

The relevance of this is that the ``coefficient ideal'' of \cite{hironaka}[Thm 1.14] which will define the local flatifier is non-trivial, i.e.

\begin{factdef}\thlabel{flatP:defn1}
In the notation of \thref{flatP:lem1} let $E_m$ be the free $B_m$ module $B_m^{\oplus \nu_m}$ then the coefficient ideal, $J_{\theta}$, of the kernel of \eqref{flatP.1} is the ideal generated by,
\begin{equation}
\label{flatP.4}
\left( D(e)(0) \mid D = \sum_{m=1}^n D_m \in {\rm Diff}_{B_m/A} (E_m , B_m) , e \in {\rm Ker} (\theta) \right).
\end{equation}
In particular, therefore, in the presence of the conclusion of \thref{flatP:lem1}, $J_{\theta} \subseteq {\mathfrak m}$.
\end{factdef}

As promised we can employ this to describe,

\begin{defrev}\thlabel{flatP:defn2}
Let everything be as in \thref{flatG:rev1} with $s$ a point of $S$ and ${\rm Aff}^* / (S,s)$ the category of pointed affinoids over $S$, then $F_{\mathscr F}^*$ is the set valued functor,
\begin{equation}
\label{flatP.5}
F_{\mathscr F}^* (T,t) = \left\{\begin{matrix}
\{\emptyset \} &\mbox{if} \ \mathscr{F}_T/T \ \mbox{is flat}, \hfill \\
\emptyset &\mbox{otherwise.} \hfill
\end{matrix}\right.
\end{equation}
\end{defrev}

Of which the precise relation to the coefficient ideal is,

\begin{fact}\thlabel{flatP:fact2}
cf. \cite{hironaka}[Thm 1.14] Let everything be as in \thref{flatG:fact2} and \thref{flatP:defn2}, and suppose moreover that $S$ is strictly Henselian then $F_{\mathscr F}^*$ is represented by a non-empty closed sub-scheme $F_{\mathscr F}^* \hookrightarrow S$ pointed in $s$. Better still if ${\mathcal X}$ of \thref{flatG:fact2} is the spectrum of a strictly Henselian local ring smooth, \thref{con:cor1}, over $S$, then for $M = \Gamma ({\mathscr F})$ and any presentation satisfying \thref{flatP:defn1} the completion of $F_{\mathscr F}^*$ in $s$ is the sub-scheme defined by the coefficient ideal $J_{\theta}$ of \eqref{flatP.4}.
\end{fact}

\begin{proof}
Since everything is flat over a field, the fibre of the flatifier $F_{\mathscr F}$ of \thref{flatG:fact2} over $s$ is non-empty, and, around each point $*_j$ of the fibre $F_{\mathscr F}$ is \'etale locally isomorphic to a closed sub-scheme $F_j$ of $A$, so, plainly $F_{\mathscr F}^*$ is represented by
\begin{equation}
\label{flatP.6}
\coprod_j (F_j , *_j) \to (S,s) \, .
\end{equation}

Over any ring, however, a diagram chase reveals that if a module is flat modulo ideals $I$ and $J$ then it's flat modulo $I \cap J$, so, by definition the left hand side of \eqref{flatP.6} is also the closed sub-scheme defined by the intersection of the ideals of the $F_j$, i.e. there was only ever one $*_j$ to begin with.

As to the better still. Plainly, by definition, the completion $\widehat F_{\mathscr F}^*$ of the pointed flatifier is the flatifier, $F_{\widehat{\mathscr F}}^*$, of the completion whose ideal $J$ in the complete local ring of ${\mathcal X}$, certainly contains $J_{\theta}$ since each $B_m$ in \eqref{flatP.1} is flat over $A$. The converse, is more difficult, but is exactly \cite{hironaka}[Lemma 1.14.5].
\end{proof}

This brings us to the key point of the discussion,

\begin{prop}\thlabel{flatP:prop1} \emph{\cite{hironaka}[Theorem 2.4, item 3]}
Again for everything  as in \thref{flatG:fact2} and \thref{flatP:defn2} with $S$ strictly Henselian; ${\mathcal X}/S$ of finite type, \thref{formal:rev1}, and suppose that the pointed flatifier $F_{\mathscr F}^* \hookrightarrow S$ is a Carter divisor then the composition,
\begin{equation}
\label{flatP.7}
{\rm H}^1_{f^{-1} F_{\mathscr F}^*} ({\mathcal X} , {\mathscr F}) \hookrightarrow {\mathscr F} \twoheadrightarrow {\mathscr F} \vert_{{\mathcal X}_s}
\end{equation}
is not zero at some point of the fibre ${\mathcal X}_s$.
\end{prop}

\begin{proof}
By \thref{flatP:fact2}, for every point $x$ of ${\mathcal X}_s$, there is an ideal $I_x$ of ${\mathscr O}_{S,s}$ defining the pointed flatifier $F_{{\mathscr F}_x}^*$ as a closed sub-scheme of $S$, and plainly,
\begin{equation}
\label{flatP.8}
F_{\mathscr F}^* \hookrightarrow \underset{x \in {\mathcal X}_s}{\cap} F_{{\mathscr F}_x}^* \, .
\end{equation}

Conversely, however, the sub-scheme on the right of \eqref{flatP.8} defines the flatifier after completion in ${\mathfrak m} (s)$, so, just as in \eqref{flatG.16} et seq., the purity condition in \thref{flatG:def2} ensure that it's the pointed flatifier over $S$. As such the Cartier hypothesis of \thref{flatP:prop1} amounts to the existence of $\gamma$ in ${\mathscr O}_{S,s}$ such that,
\begin{equation}
\label{flatP.9}
{\mathfrak m} (s) \supseteq (\gamma) = (I_x \mid x \in {\mathcal X}_s)
\end{equation}
and whence,  by \thref{flatP:fact2}, for some point $x$, the coefficient ideal $J_{\theta} = \widehat{I}_x$, for an appropriate presentation of $\widehat{\mathscr F}_x$ as a module over the complete local ring $\widehat{\mathscr O}_{X,x}$, is Cartier. Thus following Hironaka, \cite{hironaka}[1.14.3], albeit in the notation of \eqref{flatP.3} et seq., there are elements $e,\varepsilon$ of the presentation \eqref{flatP.1} such that $e = \gamma \varepsilon$, with $\theta(e) = 0$, but $\theta(\varepsilon) \notin {\mathfrak m} (s) {\mathscr F}_x$, so the element $\theta(\varepsilon)$ in the left most group of \eqref{flatP.7} is non-zero in the rightmost group.
\end{proof}

Before continuing this discussion we may usefully make,

\begin{warning}\thlabel{flatP:warn1}
Already for algebraic spaces, even under the hypothesis of \thref{flatG:fact2} and $S$ strictly Henselian, the pointed flatifier of \thref{flatP:defn2} may very well fail to be the flatifier of \thref{flatG:rev1}  because the latter has components which don't contain $s$. An example is provided in \cite{hironaka}[Example 2].
\end{warning}

In any case, the principle
further complication, as David Rydh pointed 
out to me, is dense components of the
pointed flatifier of \thref{flatP:fact2}, and whence we need:

\begin{factdef}\thlabel{flatP:fact4}
Let everything be as in \thref{flatG:fact1} \& \thref{flatG:fact2}, 
with $Z_f(\mathscr{F})/S$ proper,
then there is a unique closed subspace $E_{\mathscr F}$ of $F_{\mathscr F}$,
{\it the fitted flatifier},
such that in the (strictly Henselian) local ring $\mathscr{O}_{F,s}$
the ideal of $E_{\mathscr F}$ is the pull-back 
of the ideal of $V'$, 
where in any sufficiently small \'etale neighbourhood $V\to S$ of $s$,
$V_{\mathrm{red}}=V'\cup V''$ is the
decomposition into components (whether of $V$ or the local ring) 
over which ${\mathscr F}$ is not, resp. is, flat.
In particular the reduction of  $E_{\mathscr F}$ factors through
$\mathfrak{Z}({\mathscr F})$
of \thref{flatG:defn100}, while respecting smooth base change $S'\rightarrow S$, i.e.
\begin{equation}\label{flatP.101}
E_{\mathscr F}\mid_{S'}=E_{\mathscr F_{S'}}
\end{equation}
\end{factdef}
\begin{proof}
Plainly if $E_{\mathscr{F}}$ is well defined, then its definition
respects smooth base change. Similarly,
if $\mathfrak{Z}({\mathscr F})$ is empty, then $E_{\mathscr F}$ is empty, and
there is nothing to do, while, otherwise,
we proceed by way of induction on the dimension of the trace, $\vert Z\vert$, of, 
$\mathfrak{Z}:=
\mathfrak{Z}({\mathscr F})$, so,
it's closed by hypothesis.
Before beginning, however,
observe, independently of the dimension of $\vert Z\vert$, 
there is a dichotomy,
\begin{claim}\label{flatP:claim11}
Let everything be as above then the reduction of the pointed flatifier
either factors through $\mathfrak{Z}$, or contains a component of $V$. 
\end{claim}
\begin{proof}[proof of \thref{flatP:claim11}]
Indeed if the former
is false then in, say, $B={\mathscr O}_{S_{\mathrm{red}},s}$, its ideal, $J$, is supported
at a prime $\mathfrak{p}$ which isn't in the support of $\mathfrak{Z}$. As such, there is a function
$g\in \mathscr{O}_{S,s}$ whose pull-back vanishes on $Z_f(\mathscr{F})$, but
$g(\mathfrak{p})\neq 0$. Consequently, by \thref{flatG:fact1}, for any $\mathcal{X}\ni x\mapsto s$, in the
local, \thref{hensel:factdef1} sense, ring $\mathscr{O}_{\mathcal{X},x}$, $(\mathscr{F}_x)_g$ is flat
over $B_g$, so, a fortiori
$({\mathscr F}_x)_{\mathfrak{p}}$ is flat at $B_{\mathfrak{p}}$.
Now we'd like to apply \cite{ega4}[Th\'eor\`eme 11.2.6] to the directed limit of
\eqref{hensel2}, but, a priori, op. cit. supposes finite type in the
discrete sense rather than the more general sense of \thref{formal:rev1}.
However, this is only used to ensure that non-flatness is a closed condition,
and we have this from \thref{flatG:fact1}, so, for a suitably small \'etale
neighbourhood $V\to S$ of $s$, there is a cover $U\to \mathcal{X}$, such
that $\Gamma(U,\mathcal{F}_U)_{\mathfrak{p}}$ is flat over $\Gamma(V)_\mathfrak{p}$,
and whence 
by \cite{ega4}[Th\'eor\`eme 11.4.1], ${\mathscr F}$ is flat along.
\begin{equation}\label{flatP.104}
B/Q:=\cap_n \mathfrak{p}^{(n)},\,\text{i.e. n{\it th} symbolic power},
\end{equation}
so $Q\supseteq J$, while $Q$ is zero in $B_\mathfrak{p}$, from which the
annihilator of $J$ is non-zero, and whence, \eqref{rank55}, $J$ is supported
on a component. 
\end{proof}

In particular, if the definition of $E_{\mathscr F}$ is non-empty at $s\in \vert Z\vert$,
then from $Z_f({\mathscr F})=Z_f({\mathscr F\mid_{V'}})$, its fibre through $s$
is the pointed flatifier of ${\mathscr F\mid_{V'}}$ which, in turn, is a
thickening of  that of $\mathscr{F}\mid_{\mathfrak{Z}}$. 
Turning to the induction, the exact statement will be that for $d\geq 0$,
there is an open $\Sigma\hookrightarrow S$ such that the co-dimension of
$S\backslash\Sigma$ intersected with the trace of $\mathfrak{Z}$ is at least $d$,
and \thref{flatP:fact4} holds over $\Sigma$. We can, therefore, start
from $d=0$ with $\Sigma$ empty, and we fix $\zeta$ in the trace of $\mathfrak{Z}$
of co-dimension $d\geq 0$. As such by \thref{flatG:fact2} 
we may suppose that the ideal of the pointed flatifier at $\zeta$ is defined
in an \'etale neighbourhood $V\rightarrow S$ of
the same, whose components are equally those of the Henselian local ring,
and we can define $V''$ to be any components of $V$ on which it is supported
with $V'$ the rest. Consequently, if we gain again denote the ideal of
the pointed flatifier of
${\mathscr F}_{V}$ at $\zeta$ by $J$, with $I'$ the ideal
of $V'$, then $J$ mod $I'$ has no support on minimal primes
of $V'$,
so, cf. \eqref{rank55}, this is still true in any finer \'etale
neighbourhood of $V'$. At the same time, if for ease of notation, we suppose
$V\rightarrow S$ is onto, with $S^\times=S\backslash \{\zeta\}$ then, 
by the induction, $E_{\mathscr{F}}\mid_{S^\times}$
is a well defined closed subspace of $F_{\mathscr{F}}\mid_{S^\times}$, 
with ideal $K$. Furthermore, 
for $V$ sufficiently small, with   $V^\times=V\backslash\{\zeta\}$,
the decomposition of $F_{\mathcal{F}}$ into $\overline{F}\hookrightarrow V$ and $F^\times\to V^\times $ of 
\thref{fact:quasi2} affords  the
following cases,

\begin{enumerate}
\item[(0)] Over $F^\times$, the definition of \thref{flatP:fact4} implies that
we do nothing, and $K$ continues to define $E_{\mathscr{F}}\hookrightarrow
F_{\mathscr{F}}$.

\item[(1)] At any points of $\overline{F}\backslash V'$, 
 $\mathscr{F}\mid_{V_{\mathrm{red}}}$ is flat,
so both $K$ and the ideal of $I'$ are the whole ring.

\item[(2)] Points of the fibre $\overline{F}_{V'}$, may be identified with the support 
of the ideal
$J+I'$ of the pointed flatifier of $\mathscr{F}_{V'}$, which, as we've said, is 
nowhere schematically dense in $V'$, and whence, again
$K$ coincides with the ideal of $V'$ restricted to $V^\times$.
\end{enumerate}

\noindent Moreover, in order to glue from $V$ to its image in $S$,
which for ease of notation we confuse with $S$ itself,
either,

\noindent (a) The restriction to  $V^\times$   
of \thref{flatP:fact4}  defines
a non-empty sub-scheme of the pointed flatifier of $\mathscr{F}_V$,
and is equally the pointed flatifier of $\mathscr{F}_{V'}$.
Consequently, 
by
\cite{ega4}[Th\'eor\`eme 11.4.1], for $j:S^\times\hookrightarrow S$
the inclusion, we have a fibre square,
\begin{equation}\label{flatP.102}
\begin{CD}
(j_* K)\mid_{V} @>>> (j_*\mathscr{O}_{F_{\mathscr{F}}})\mid_V \\
@AAA @AAA \\
I'  \mathscr{O}_{F_\mathscr{F}}\mid_V @>>> \mathscr{O}_{\overline{F}_\mathscr{F}}\mid_V,
\end{CD}
\end{equation}
where $\widetilde{F}_\mathscr{F} \hookrightarrow F_\mathscr{F}$
is the closed subspace of \thref{rmk:jap1} no associated prime
of which factors through $S\backslash\Sigma$. In particular,
(the not necessarily coherent in the non-discrete
case) ideal $(j_* K)\cap  \mathscr{O}_{F_\mathscr{F}}$ 
is, in fact, coherent, and defines a closed subspace,
\begin{equation}\label{sat0}
\widetilde{E}_{\mathscr{F}}\hookrightarrow \widetilde{F}_{\mathscr{F}}
\end{equation}
over all of $S$.
Equally, \thref{flatP:fact4} defines $E_{\mathscr{F}}$ independently of
any Galois action at the separable point, so for $\hat{\bullet}$ completion
at $\zeta$, we have a well defined sub-scheme,
\begin{equation}\label{sat1}
\widehat{E}_{\mathscr{F}}\hookrightarrow \widehat{F}_{\mathscr{F}}
\end{equation}
which we can glue to \eqref{sat0} by way of,
\begin{equation}\label{sat2}
\widehat{\widetilde{E}}_{\mathscr{F}}\rightrightarrows \widehat{E}_{\mathscr{F}}\coprod\widetilde{E}_{\mathscr{F}}
\twoheadrightarrow {E}_{\mathscr{F}}
\end{equation}
to complete the verification that $E_{\mathscr{F}}\hookrightarrow
F_{\mathscr{F}}$ is well defined.

\noindent (b) In the discrete case the alternative that
$J':=J+I'$ has empty support in $V^\times$ is equivalent it to its radical
being $\mathfrak{m}(\zeta)$, so it is immediate that $J'$
is already defined in $S$, and, indeed, \thref{flatP:rmk1}, more
is true, but in general,
we need to exhibit a descent datum for $J'$ along $(s,t):R=V\times_S V
\rightrightarrows S$. By hypothesis, however, if the support of $J'$ is 
non-empty at $\rho\in R$, then $s(\rho)=t(\rho)=\zeta$, so $s^*J'$, resp. $t^*J'$,
is the pointed flatifier of $\mathscr{F}\mid_{s^{-1}(V')}$, resp.
$\mathscr{F}\mid_{t^{-1}(V')}$, and since neither is schematically
dense they both coincide with the pointed flatifier 
$\mathscr{F}\mid_{R'}$ at $\rho$. As such $J'$ defines a closed
sub-scheme $T\hookrightarrow S$, with trace exactly $\zeta$, and 
the connected component of
$E_{\mathscr{F}}$ through $\zeta$ is, identically, the fibre of 
$F_{\mathscr{F}}\mid_T$.
\end{proof}

Since, plainly, the definition of $V'\hookrightarrow V$ in 
\thref{flatP:fact4} isn't global this merits,

\begin{rmk}\label{flatP:rmk1}
In the concluding case (b), even $s^{-1}(V')$ and $t^{-1}(V')$
coincide in $\mathscr{O}_{R,\rho}$ with $R'$. Nevertheless 
this doesn't constitute a descent datum for the closed embedding
$V'\hookrightarrow V$ since, for example. $s^{-1}(\zeta)$ can
perfectly well be empty where $s^{-1}(V')$ is non-empty. More 
explicitly in the discrete case, there is a strictly decreasing,
i.e. no common generic points,
filtration,
\begin{equation}\label{flatP103}
Z^0=V',\,\, Z^{i+1}=\mathfrak{Z}(\mathscr{F}_{Z^i}),\,\, i\geq 0
\end{equation}
and case (b) only occurs if $\zeta$ is a generic point of
some $Z^{i}$, $i\geq 1$, whose pointed flatifier is supported
in the same. Indeed if $i$ were maximal such that $Z^i\ni\zeta$,
then its pointed flatifier contains at least $Z^i$, so it must
be generic to be in case (b), while, equally, it's pointed flatifier 
has no support off $\zeta$, so, in the notation of op. cit.
the radical of $J'=J+I'$ is $\mathfrak{m}(\zeta)$, and it's
sufficient that $J'$ is a priori
well defined after strict Henselisation, or, indeed completion.
\end{rmk}

Irrespectively, in a critical respect the fitted flatifier mimics
the flatifier, i.e.

\begin{claim}\label{flatP:fact5}
In a sufficiently small \'etale neighbourhood $V\to S$, 
of $s\in S$, identify the
(implicitly non-empty) piece,
$E^*_{\mathscr{F}}$ 
of the fitted flatifier of \thref{flatP:fact4}
through $s$ 
with a closed sub-scheme, while denoting by
$\rho:W\to V$ the blow up in the same, $D$ the
exceptional divisor, and $W'$ the proper transform
of $V'$ of op. cit., then, at every $w\in W'\cap D$,
the pointed flatifier of $\mathscr{F}_W$ is $D$,
and whence
the conclusion of
\thref{flatP:prop1} holds, i.e. the composition,
\begin{equation}
\label{flatP.107}
{\rm H}^1_{D} ({\mathcal X}_W , {\mathscr F}_W) \hookrightarrow {\mathscr F}_W \twoheadrightarrow {\mathscr F} \vert_{{\mathcal X}_w}
\end{equation}
is not zero at some point of the fibre ${\mathcal X}_w$.
\end{claim}
\begin{proof}
The further conclusion of \eqref{flatP.107} is clear, and otherwise, 
again,
for $I'$, resp. $J$, the ideal in $\mathscr{O}_{S,s}$ of $V'$, 
resp. the pointed flatifier,
$F^*_{\mathscr{F}}$, 
with $\rho^*: \mathscr{O}_{S,s} \to \mathscr{O}_{W,w}$, then,
by definition
$\rho^*(J +I')$ is defined by a single function
$\gamma$. Consequently, there are functions 
$x,y$, resp. $\xi, \eta$, on $S$ at s, resp. $W$ at $w$, such that,
\begin{equation}\label{flatP104}
\gamma=x+y,\, J\ni x, \, I'\ni y,\, \rho^*x =\gamma\xi,\, \rho^* y=\gamma^n\eta,
\end{equation}
where, given $y$, there is a maximal $n\geq 1$ such that $y\in (I'+J)^n$,
and $\eta$ vanishes in $W'$. In particular therefore, $\eta\in\mathfrak{m}(w)$,
and whence $\rho^*(J+I')=(\rho^*x)=\rho^*J$.
\end{proof}

En passant this discussion allows us to establish that both the 
flatifier and the fitted flatifier are not too far from algebraic,
i.e.

\begin{fact}\label{flatP:fact6}
Let everything be as in \thref{flatG:fact1} \& \thref{flatG:fact2}, 
with $Z_f(\mathscr{F})/\mathcal{S}$ proper, and $\mathcal{S}$ universally consistent in
a Japanese way,
then for every irreducible 
component, $F$, 
of $F_{\mathscr F}$, resp. $E_{\mathscr F}$,
there is a closed formal sub-champ $\mathcal{T}\hookrightarrow \mathcal{S}$ such that,
$F_{\mathrm{red}}\to \mathcal{T}_{\mathrm{red}}$ is {\it almost a birational modification},
i.e. conditions (1)-(2) of \thref{A:mod} holds, but (3) of op. cit.
might only hold at points $t$ which are schematically dense in some \'etale local
component.
\end{fact}
\begin{proof}
The conditions determine $\mathcal{T}$ uniquely, so, without loss
of generality we may suppose that $\mathcal{S}=S$ is affine, and,
similarly to the proof of \thref{flatP:fact4} we'll prove, inductively
in $d\geq 0$, that there is a Zariski open $\Sigma_d\hookrightarrow S$
with a co-dimension $d$ complement and $T_d\hookrightarrow S$ closed
such that \thref{flatP:fact6} holds for the pair $T_d\cap \Sigma_d\hookrightarrow S_d$.
As such, we can certainly start with $d=0$ and $\Sigma_d$ empty. 
Now, quite generally, although its intersection with the trace may
not decrease, \eqref{flatP103} still defines, albeit with $Z^0=S$, a strictly
decreasing chain of formal sub-schemes, so, we add a further
induction on the maximum $i\geq 0$, 
$Z^i\ni\zeta$,
calculated at points $\zeta$ of the
trace of $S\backslash \Sigma$ of co-dimension $d$.
Consequently, the start of this further induction is
$F\xrightarrow{\sim} S$ a priori in an \'etale neighbourhood,
since without loss of generality $X'$ is empty in applying
\eqref{quasi3},
but being an isomorphism evidently descends, so, 
a posteriori in a Zariski neighbourhood
$U\ni \zeta$. In particular, a fortiori $d=0$, so gluing to
$T_d$ is empty, and we take $\Sigma_1$, for $i=0$, to
be the union of such neighbourhoods $U$ with $T_1$ all the
irreducible components of $S$ that they meet.
As to the next step, by \thref{flatP:claim11}, either the pointed flatifier
at $\zeta$, is contained in $\mathfrak{Z}$, so $F$ itself factors
through $\mathfrak{Z}$ and we've decreased $i$, or, the pointed
flatifier contains an \'etale local component of $S$. Again, this must,
glue to the given $T_d\hookrightarrow \Sigma_d$, or it misses
$\Sigma_d$ altogether. In the latter case $S\backslash \Sigma_d$
is a union of components of $S$, by \eqref{rank55}, 
so, by  another application of the same and the irreducibility of $F$, 
the fibre over $\Sigma_d$ is empty, i.e. without loss of
generality $d=0$, and whence, \'etale locally, the pointed flatifier, 
$F_\zeta$ through $\zeta$ may, by \thref{fact:quasi2}, be identified
with the fibre $F_V$ over an \'etale neighbourhood. As such, it admits
a descent datum, so, again, it's actually a union of components of $S$.
Finally, therefore, we have the possibility that the
pointed flatifier glues non-vacuously to $T_d$, but then
\eqref{rank55} applies once more to conclude that $T_d$ is a union
of irreducible components containing the \'etale local
component through $\zeta$.
\end{proof}
%\tableofcontents

%\end{document}

%%%%%%%%%%%%%%%

%\tableofcontents

%\end{document}

%%%%%%%%%%%%%%%

%\tableofcontents

%\end{document}

%%%%%%%%%%%%%%%

\vglue 1cm

\section{Compactification in general}\label{S:flatC}

By a compactification, locally, is to be understood,

\begin{defn}\thlabel{flatC:defn1}
For $S = {\rm Spf} (A)$ a formal Noetherian affinoid, and $E \to S$ a quasi-finite formal space of formally finite type (e.g. the flatifier) by a compactification of $E$ is to be understood a commutative diagram,
\begin{equation}
\label{flatC.1}
\xymatrix{
E \ \ar[rd] \ar@{^{(}->}[rr]_j &&C \ar[ld]^{\pi} \\
&S
}
\end{equation}
in which $\pi$ is finite and $j$ is a Zariski dense open immersion. In the event that a compactification exists we will say that $E$ is compactifiable.
\end{defn}

Should the topology on $A$ be discrete, i.e. ${\rm Spec} (A) = {\rm Spf} (A)$, then, of course, Zariski's main theorem tells us that compactifications exist, but, in general, it is a highly non-trivial condition, since, for example:

\begin{fact}\thlabel{flatC:fact1}
Let everything be as in \thref{flatC:defn1} and suppose that $E$ is compactifiable then it is algebraisable, and is even the $I$-adic (i.e. the topology on $A$) completion of a scheme.
\end{fact}

\begin{proof}
$C$ is algebraisable. Indeed it's the $I$-adic completion of the scheme ${\rm Spec} \, \Gamma (C,$ ${\mathscr O}_C)$, which is, of course, finite over $S$, while $E$ is quasi compact, so its covered by finitely many open affinoids of $C$, each of which is algebraisable.
\end{proof}

In the formal category we require criteria for the existence of a compactification, and to this end we have,

\begin{lem}\thlabel{flatC:lem1}
Let $S = {\rm Spf} (A)$ be a Noetherian affinoid and $E \xrightarrow{\varphi} S$ a quasi-finite map of formally finite type such that there exists a (not necessarily fibred) commutative diagram,
\begin{equation}
\label{flatC.9}
\xymatrix{
G \ \ar[d]_{\gamma} \ar@{^{(}->}[r]_{\kappa} &P \ar[d]^{\pi} \\
E \ar[r]_{\varphi} &S
}
\end{equation}
with proper vertical arrows, $\kappa$ a dense open embedding, and $\gamma$ {\it dominant}, i.e. $\gamma^*$ is injective, then $E$ is compactifiable.
\end{lem}

\begin{proof}
Consider the at worst quasi-coherent equaliser,  
\begin{equation}
\label{flatC.10}
0 \to {\mathscr C} \to \pi_* {\mathscr O}_P \prod \varphi_* {\mathscr O}_E \overset{\kappa^*}{\underset{\gamma^*}{\rightrightarrows}} \varphi_* \gamma_* {\mathscr O}_G = \pi_* \kappa_* {\mathscr O}_G \, .
\end{equation}
Then since $\gamma$ is dominant, the projection of ${\mathscr C}$ to $\pi_* {\mathscr O}_P$ is certainly injective, so by \cite{ega3}[Th\'eor\`eme 3.4.2], ${\mathscr C}$ is coherent, and the resulting formal scheme $C := {\rm Spf} \, {\mathscr C} \to S$ is finite over $S$. It remains, therefore, to prove that the map $E \to C$ resulting from the second projection,
\begin{equation}
\label{flatC.11}
{\mathscr C} \to \varphi_* {\mathscr O}_E
\end{equation}
is open, which, we prove, following \cite{ega4}[Th\'eor\`eme 8.12.6], albeit
via the slightly different inductive statement on $d\geq 0$: there is an open embedding $\Sigma\rightarrow S$ with complement
of co-dimension $d$ in the trace of $S$ such that the map of fibres, $E_\Sigma\to C_\Sigma$, is open. As such in the inductive hypothesis we should suppose that $S$ is a formal scheme rather than just an affinoid, but otherwise everything including the diagram \eqref{flatC.9}, whose  defining characteristics respect arbitrary base change, is as above. Plainly, in co-dimension $0$ we can take $\Sigma$ empty,  
while to prove the statement for $d+1$, we take $s$ to be one of the
finitely many co-dimension $d$ points of the complement of $\Sigma$.
Now, by \thref{fact:quasi2}, there is an \'etale neighbourhood $V\to S$
of $s$, together with a decomposition,
\begin{equation}\label{flatC122}
E\mid_{V}= \overline{\Phi}\coprod \Phi'
\end{equation}
wherein $\overline{\Phi}\rightarrow V$ is finite, and $\Phi'$
factors through the fibre over $\Sigma$. 
In particular the fibre $G_{\bar{\Phi}}$ is proper over $S$, so 
$\kappa : G_{\bar{\Phi}} \to P$ is both open and closed. Consequently the functions on $G$ defined by the projection,
\begin{equation}
\label{flatC.13}
{\mathscr O}_{\overline{\Phi}}  \underset{\gamma^*}{\longrightarrow} \varphi_* \gamma_* {\mathscr O}_{G}
\end{equation}
are the image of functions on $P$, and the natural map,
\begin{equation}
\label{flatC.14}
{\mathscr C} \to {\mathscr O}_{\Phi}
\end{equation}
admits a retraction, i.e. $\overline{\Phi}$ is a connected component of $C$, while
by induction $\Phi'$ is open in $C\mid_{\Sigma}$, so $E\mid_V\to C_V$
is open, from which the (open) image of $V$ in $S$ 
achieves the next step in the induction.
\end{proof}

\bigskip

En passant this resolves any patching difficulties, since:

\bigskip

\begin{bonus}\thlabel{flatC:fact3}
Let everything be as in \thref{flatC:lem1}, and denote by a bar the compactification of op. cit. then if \eqref{flatC.9} respects a smooth, or indeed flat, base change $T \to S$, the natural map,
\begin{equation}
\label{flatC.5}
(\overline E) \times_S T \to \overline{E \times_S T}
\end{equation}
is an isomorphism.
\end{bonus}

\begin{proof}
By hypothesis we have the base change,
\begin{equation}
\label{fix251}
\xymatrix{
G \times_S T \ \ar[d]_{\gamma_T} \ar@{^{(}->}[r]_{\kappa_T} &P \times_S T \ar[d]^{\pi_T} \\
E \times_S T \ar[r]_{\varphi_T} &T
}
\end{equation}
of \eqref{flatC.9}, so that the right hand side of \eqref{flatC.5} is the base change of \eqref{flatC.10} whenever $T/S$ is flat.
\end{proof}

%\tableofcontents

%\end{document}

%%%%%%%%%%%%%%%

\vglue 1cm

\section{Compactifying the flatifier}\label{S:end}

It will emerge that the problem of compactifying the flatifier and that of functorial flattening by blowing up are one and the same, since the key operation is,

\begin{defn}\thlabel{end:defn1}
Let ${\mathscr F}$ on ${\mathcal X} \xrightarrow{f} S$ be as in \thref{flatG:fact2} with $S$ consistent in a Japanese way, \thref{con:defn1}, and suppose the flatifier $F_{\mathscr F} \to S$ exists (i.e. op. cit. the non-flatness of ${\mathscr F}$ is universally pure) 
and that the fitted flatifier, $E_{\mathscr{F}}\hookrightarrow F_{\mathscr{F}}$,
of \thref{flatP:fact4} also exists (e.g. 
the further condition of $Z_f(\mathscr{F})/S$ proper in the non-discrete case)
while admitting a, to be specified, compactification $E_{\mathscr{F}} \mapsto \overline E_{\mathscr{F}}$ respecting smooth base change, i.e. \eqref{flatC.5} holds, then the $0$th Fitting ideal, $J'$, of the aforesaid compactification, equally respects smooth base change and determines a functorial {\it blow up} (without support in a minimal prime),
\begin{equation}
\label{end.1}
S \overset{\rho_S}{\longleftarrow} S^{\#} = {\rm Bl}_J'(S)
\end{equation}
{\it in the fitted flatifier} with exceptional divisor $D$ and proper transform of ${\mathscr F}$ the co-kernel of the exact sequence,
\begin{equation}
\label{end.2}
0 \to {\mathscr H}_D^0 (S^{\#} , \rho_S^* \, {\mathscr F}) \to \rho_S^* \, {\mathscr F} \to {\mathscr F}^{\#} \to 0 \, .
\end{equation}
In particular, by \thref{flatC:fact3}, if the blow up in the fitted flatifier exists on some cover by affinoids of a  champ in the smooth topology, then it exists globally, so, for such more general $S$ we will say that the blow up in the fitted flatifier exists, if it exists everywhere locally.
\end{defn}

Now, from what we already know we may easily deduce,

\begin{fact}\thlabel{end:fact1}
Suppose, under the hypothesis of \thref{end:defn1} that the blow up in the fitted flatifier always exists then for ${\mathcal S}$ a formal quasi-compact locally Noetherian champ in the smooth topology, the sequence of blow ups and proper transforms,
\begin{equation}
\label{end.3}
{\mathcal S}_0 = {\mathcal S} \, , \ {\mathscr F}_0 = {\mathscr F} \, , \ {\mathcal S}_{i+1} = {\mathcal S}_i^{\#} \, , \ {\mathscr F}_{i+1} = {\mathscr F}_i^{\#} \, , \quad i \geq 0,
\end{equation}
wherein \thref{end:defn1} is supposed for every $\mathscr{F}_i/\mathcal{S}_i$,
terminates for $m \gg 0$ in a champ $\widetilde{\mathcal S} = {\mathcal S}_m$, and a sheaf $\widetilde{\mathscr F} = {\mathscr F}_m$ such that the flatifier $F_{\widetilde{\mathscr F}} \to \widetilde{\mathcal S}$ is an embedded closed sub-champ with the same reduction, or, equivalently, \cite{ega4}[Th\'eor\`eme 11.4.1] and \thref{con:defn1}, $\widetilde{\mathscr F}$ is flat over $\widetilde{\mathcal S}_{\rm red}$.
\end{fact}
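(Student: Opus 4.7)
The plan is a Noetherian induction: iterating \thref{end:defn1} is well-defined under our standing hypotheses, and a suitable invariant measuring the obstruction to flatness can be shown to strictly decrease under each blow up, forcing termination after finitely many steps.

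First I would verify that each $(\mathcal{S}_i, \mathscr{F}_i)$ falls under the scope of \thref{end:defn1}. Given $\mathcal{S}_0 = \mathcal{S}$ formal, quasi-compact, locally Noetherian, and the blow up in the fitted flatifier existing everywhere locally, the construction \eqref{end.1}--\eqref{end.2} produces $(\mathcal{S}_{i+1}, \mathscr{F}_{i+1})$; the consistency in a Japanese way, \thref{con:defn1}, is preserved by such blow ups since $J$ is coherent with Noetherian centre and the whole construction descends through the smooth cover by \thref{flatC:fact3}. Next I would observe that $J$ is the unit ideal outside the union of the non-flat components $(\mathcal{S}_i)_\bullet$, so on $\mathcal{S}_{i+1} \setminus E_i$ the map $\rho_{\mathcal{S}_i}$ is an isomorphism and $\mathscr{F}_{i+1}$ coincides with $\mathscr{F}_i$, which is already flat there; hence $(\mathcal{S}_{i+1})_\bullet \subseteq E_i$, a nowhere dense Cartier divisor.

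For termination I would seek a well-founded invariant that strictly decreases under each iteration. Natural candidates are the dimension of the non-flat locus refined, at equal dimension, by a Hilbert--Samuel type multiplicity of $J$ at the generic points of its components, or, following the Raynaud--Gruson scheme, a lexicographic Fitting-rank invariant. The crucial fact is that blowing up the $0$th Fitting ideal of the \emph{minimal compactification} of the flatificator genuinely improves flatness, rather than merely translating the obstruction: this reduces, via smooth base change and \thref{flatC:fact3}, to the classical Noetherian local calculation. \textbf{Establishing this strict decrease is the main technical obstacle}; once in hand, well-foundedness of the descent yields $(\mathcal{S}_m)_\bullet = \emptyset$ for some $m \gg 0$, at which point the iteration is forced to halt.

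At the terminal step $J$ is already the unit ideal, so the fitted flatifier fills all of $\widetilde{\mathcal{S}}$ and $F_{\widetilde{\mathscr{F}}} \hookrightarrow \widetilde{\mathcal{S}}$ is an everywhere dense embedded closed sub-champ. The asserted equivalence with flatness of $\widetilde{\mathscr{F}}$ over $\widetilde{\mathcal{S}}_{\rm red}$ then follows from \cite{ega4}[Th\'eor\`eme 11.4.1] combined with the characterisation of the flatifier recalled in \thref{con:defn1}.
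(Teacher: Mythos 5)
There is a genuine gap: the step you yourself flag as ``the main technical obstacle'' --- producing a well-founded invariant on the base that strictly decreases under one blow up in the fitted flatifier --- is precisely the content of the statement, and nothing in your proposal establishes it. It is moreover doubtful that any of your candidate invariants (dimension of the non-flat locus, a Hilbert--Samuel multiplicity of $J$ at its generic points, or a Raynaud--Gruson style lexicographic Fitting rank) actually drops after a single step: the paper makes no such claim, and your auxiliary assertion that $(\mathcal{S}_{i+1})_{\bullet}\subseteq E_i$ is unjustified, since away from $E_i$ the blow up is an isomorphism and $\mathscr{F}_{i+1}$ is just $\mathscr{F}_i$ there, which need not be flat --- the centre is the Fitting ideal of the compactified non-dominant part of the flatificator, not the whole non-flat locus. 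So the proposal reduces the fact to an unproved strict-descent statement whose truth, in the form you pose it, is unclear.

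The paper's argument is structured quite differently and the descending quantity lives on the fibres of $\mathcal{X}$, not on the base. After reducing to an affinoid by functoriality and quasi-compactness, one uses the \'etale-local decomposition of the minimal compactification \eqref{end.4}, hence of the Fitting ideal \eqref{end.5}, to see that the blow up factors locally through the blow up in the pointed flatifier; by functoriality the pointed flatifier becomes a Cartier divisor at every point $s^{\#}$ of the exceptional divisor, so \thref{flatP:prop1} gives that the surjection \eqref{end.6} on the fibre over $s^{\#}$ is a \emph{strict} quotient. Termination is then proved by contradiction: if the sequence \eqref{end.3} never stabilises, compactness of the Zariski--Riemann space (or Tychonoff applied to the inverse system) produces compatible points $s_{i+1}\in E_{i+1}$ with $s_{i+1}\mapsto s_i$; since field maps are injective the residue extensions are eventually algebraic, so over a common algebraic closure $K$ all the fibres are identified with one Noetherian $\mathcal{X}_K$, and \eqref{end.6} yields an infinite strictly decreasing chain of coherent quotients \eqref{end.8} on $\mathcal{X}_K$, contradicting Noetherianity and quasi-compactness of $\mathcal{X}$. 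If you wish to salvage your approach you would need to supply an actual proof of strict decrease of a base invariant, which the paper deliberately avoids; alternatively, adopt the fibrewise strictness of \eqref{end.6} plus the compactness/limit-point argument, which is what makes the proof go through with termination only asserted for $m\gg 0$.
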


\begin{proof}
By the functoriality of blowing up in the fitted flatifier and the quasi-compactness of ${\mathcal S}$, we may, without loss of generality, suppose that ${\mathcal S}$ is an affinoid $S = {\rm Spf} (A)$, or, indeed, a sufficiently small \'etale neighbourhood of the same. Furthermore if $C$ is  any compactification of $E_{\mathscr{F}}$ then, for $f(\vert Z\vert_f(\mathscr{F}))$ as in \thref{flatG:def2}, everywhere \'etale locally,
over a point, $s \in f(\vert Z\vert_f(\mathscr{F}))$, 
it decomposes as a disjoint union of open and closed pieces,
\begin{equation}
\label{end.4}
E_{\mathscr F}^* \coprod ({\rm stuff})
\end{equation}
where $E_{\mathscr{F}}^*$ is the pointed flatifier through a 
sufficiently small \'etale neighbourhood at $s \in S$ of 
the components $V'$, \thref{flatP:fact4}, where the pointed
flatifier isn't supported on a minimal prime. As such, \'etale locally around any $s$, the fitting ideal of the compactification decomposes as a product,
\begin{equation}
\label{end.5}
J_s^* \cdot ({\rm stuff})
\end{equation}
where $J_s^*$ is the ideal of the pointed flatifier in $V'$ through $s$. Thus, again, \'etale locally, the blow up in the fitted flatifier factors through the blow up in the pointed flatifier $J_s^*$. Better still,
\thref{flatP:fact5} applies to conclude that the surjection,
\begin{equation}
\label{end.6}
{\mathscr F} \mid_{{\mathcal X}_{s^{\#}}} \twoheadrightarrow {\mathscr F}^{\#} \mid_{{\mathcal X}_{s^{\#}}}
\end{equation}
is non-trivial at some point $x \in {\mathcal X}_{s^{\#}}$. Now suppose that \thref{end:fact1} is false, then by the compactness of the Zariski-Riemann surface of $S$ (or just Tychonoff for the product of the sequence \eqref{end.3}) there is a sequence of points $s_{i+1}$ in each exceptional divisor $D_{i+1}$, $i \geq 0$, on $S_{i+1}$ such that,
\begin{equation}
\label{end.7}
s_{i+1} \to s_i \, .
\end{equation}
Equally, since any map of fields is an injection, we can suppose that for all $i \gg 0$, $k(s_i) \to k(s_{i+1})$ is algebraic, so choosing an algebraic closure $K$ of some $k(s_m)$, $m \gg 0$, we can, for $i \geq m$, identify \eqref{end.7} with a sequence of $K$-points and the fibres ${\mathcal X}_{s_i}$ with the same fibre ${\mathcal X}_K$. Consequently by \eqref{end.6} we get a strict sequence of quotients of coherent sheaves,
\begin{equation}
\label{end.8}
{\mathscr F}_m \mid_{{\mathcal X}_K} \twoheadrightarrow {\mathscr F}_{m+1} \mid_{{\mathcal X}_K} \twoheadrightarrow {\mathscr F}_{m+2} \mid_{{\mathcal X}_K} \twoheadrightarrow \cdots
\end{equation}
which contradicts the local Noetherianity and quasi-compactness of ${\mathcal X}$.
\end{proof}

In order to apply this we will, plainly need,

\begin{fact}\thlabel{end:fact2}
Let things be as in \thref{end:defn1} 
and suppose moreover that $Z_f(\mathscr{F})/S$, \thref{flatG:defn100}, 
is proper over $S$, and the latter is
universally consistent in a Japanese way, \thref{con:defn1},
then the blow up in the fitted flatifier exists.
\end{fact}

More precisely we will prove by induction on 
$m\geq 0$,

\begin{fact}\thlabel{end:fact22}
Let everything be as in \thref{end:fact2}, with,
\begin{equation}\label{221}
\emptyset=E_0 \subset E_1 \subset \cdots E_n\subset E_{n+1}=E_{\mathcal{F}}
\end{equation}
the functorial filtration by closed subsets obtained by applying \thref{flatG:fact66}
to the structure sheaf, then for every $0\leq m\leq n$, each $E_i$, $0\leq i\leq m+1$ 
admits a compactification respecting smooth base change, \eqref{flatC.5}.
\end{fact}

\begin{proof}[proof of \thref{end:fact22}]
By induction on $\dim S$, so, to begin with, in dimension $0$, whence, 
$E_{\mathscr{F}}$ is empty, so this is ok. Otherwise, the 
properness condition on $Z_f(\mathscr{F})/S$ ensures the same 
whether on any modification of it, 
or any closed sub-champ thereof.
As such, with a view to doing the $m=0$ of \thref{end:fact22},
by downward induction on the dimension of $E_{\mathscr{F}}$,
\thref{end:fact1} ensures that there is a sequence of blow ups in centres which factor through 
$\mathfrak{Z}$, \thref{flatG:defn100}, and its proper transforms, are not supported in a minimal prime of the same, and result in a final modification $\widetilde S \xrightarrow{\pi} S$ together with closed sub-schemes,
\begin{equation}
\label{end.11}
\widetilde{\mathfrak{Z}} \hookrightarrow \widetilde S_{\rm red} \hookrightarrow \widetilde S
\end{equation}
where, a priori, only the proper transform of ${\mathscr F} \vert_{\mathfrak{Z}}$ 
is flat, but by a minor variation of \thref{flatP:fact5} \& \eqref{end.6} et seq.,
the proper transform $\widetilde{\mathscr F}$ on all of $\widetilde{S}$
restricted to $\widetilde{\mathfrak{Z}}$
is flat, and we assert, 
\begin{claim}\label{end:claim101}
The union, $P$, of the irreducible components of $(E_{\widetilde{\mathscr{F}}})_1$, \eqref{221},
containing $\widetilde{\mathfrak{Z}}$, \thref{rmk:jap1},
embed into  $\widetilde S$, has reduction $\widetilde{\mathfrak{Z}}$ and
for $G\hookrightarrow P$, the open where $\mathscr{F}\mid_P$ is flat, 
the natural map $G\rightarrow (E_{\mathscr{F}})_1$ dominates every component
whose reduction doesn't factor through the locus,
$\mathfrak{Z}_1\hookrightarrow\mathfrak{Z}$,
where $\widetilde{\mathfrak{Z}}$
modifies $\mathfrak{Z}$
\end{claim}
\begin{proof}[proof of \thref{end:claim101}]
Let $z\in\widetilde{\mathfrak{Z}}$ lie over $s\in\mathfrak{Z}$ with $V_{\mathrm{red}}=V'\cup V''$ the
components of \thref{flatP:fact4} where $\mathscr{F}_s$ isn't, resp. is, flat.
Thus, \thref{blow:fact3}, the \'etale neighbourhood $\widetilde{V}\to \widetilde{S}$
afforded by base change has reduction the union of the proper transforms,
$\widetilde{V'}$ and $\widetilde{V''}$. Better still: $\mathscr{F}$ is flat over
the latter, thus, inter alia, there is no torsion, so, cf. \eqref{end.2},
$\widetilde{\mathscr{F}}_{\widetilde{V''}}=\mathscr{F}_{\widetilde{V''}}$ is flat too,
and, in the obvious notation, $\widetilde{V}''\supseteq \widetilde{V''}$.
Similarly, at least in the discrete case, $\widetilde{\mathscr{F}}=\mathscr{F}$,
at every generic point of $\widetilde{\mathfrak{Z}}$, thus, at such points, 
the pointed flatifier of  $\widetilde{\mathscr{F}}_{\widetilde{V'}}$ factors through the
total transform of $\mathfrak{Z}$, which is nowhere dense, and whence
$\widetilde{V}'\supseteq \widetilde{V'}$, so, altogether,
$\widetilde{V}'= \widetilde{V'}$.
Now, a priori, this latter argument risks being meaningless in 
the non-discrete case, so, in general we argue as follows: 
supposing $V$ sufficiently fine, 
fix
an irreducible component $W$  of $V'$, 
then, by definition, the pointed flatifier of $\mathscr{F}_W$ isn't
schematically dense, so, cf. \eqref{flatP.8}, there is an $x\in \mathcal{X}$ such
that the flatifier, $F^x$, of $\mathscr{O}_{W,s}\to\mathscr{O}_{\mathcal{X},x}$
isn't schematically dense in $W$. 
As such if $\widetilde{W}$ is the proper transform of $W$,
then the pointed flatifier of $\mathscr{F}_{\widetilde{W}}$, factors
through the total transform of $F^x$, which, cf. \eqref{rank55}, cannot
be supported on a minimal prime, so, again $\widetilde{V}'= \widetilde{V'}$.
In particular, therefore, $\widetilde{\mathfrak{Z}}$ embeds in $E_{\widetilde{\mathscr{F}}}$
as a union of a subset of the irreducible components of $(E_{\widetilde{\mathscr{F}}})_{\mathrm{red}}$
by \thref{blow:fact3}, and we take $P$ to be resulting subspace of
$(E_{\widetilde{\mathscr{F}}})_1$ supported on $\widetilde{\mathfrak{Z}}$ defined by \thref{rmk:jap1}.
We know, however, that $(E_{\widetilde{\mathscr{F}}})_1\to\widetilde{S}$ is net, 
so $P$ embeds in $\widetilde{S}$ and defines a closed sub-scheme $\pi(P)\hookrightarrow S$
by \cite{ega3}[Th\'eor`eme 3.4.2]. Consequently, 
$\pi(P)_1$ is the Zariski closure in $S$ of $E_1$,
by \cite{ega4}[Th\'eor\`eme 11.4.1], and $E_1\to\pi(P)_1$
is almost a modification by op. cit. \& \thref{flatP:fact6}.
Finally, therefore, observe, that the proper transform of anything must, however, factor
through the proper transform of the closure of it's image, 
so we have an almost modification $\widetilde{E}_1\to P$,
which is an isomorphism over the open,
$G$, where $\mathscr{F}_P$ is flat, and, whence, $G$ dominates every component of $E_1$ which doesn't factor 
through the locus where 
$\widetilde{E}_1$ modifies $E_1$.
\end{proof}
As such, in the notation of \thref{end:claim101} we have a commutative square,
\begin{equation}
\label{end.12}
\xymatrix{
G \ \ar[d]_{\gamma} \ar@{^{(}->}[r]_{\kappa} &P \ar[d]^{\pi} \\
\varphi:E_1\backslash\{\text{components in $\mathfrak{Z}_1$}\}\ar[r]^{} &S
}
\end{equation}
in which $\gamma$ is dominant, $\kappa$ is open, and $\pi$ is proper, so we'll be able to conclude that the bottom left corner of  
\eqref{end.12}
is compactifiable  by \thref{flatC:lem1}, if $\gamma$ is proper. To this end observe,
\begin{claim}\thlabel{end:claim1}
Over the above open $G$, the natural surjective map,
\begin{equation}
\label{end.13}
{\mathscr F}\!\mid_G \, \twoheadrightarrow \widetilde{\mathscr F} \mid_G 
\end{equation}
is an isomorphism.
\end{claim}

\begin{proof}[Proof of \thref{end:claim1}]
We require to prove that the kernel of \eqref{end.13} is zero. It is, however, by a cursory inspection of the long exact sequence for ${\rm Tors}_{\bullet}^{{\mathscr O}_G} (M , \ )$, implied by \eqref{end.13}, for any ${\mathscr O}_G$ module $M$, flat over $G$ while being zero at every generic point, so it's zero.
\end{proof}

Consequently if we have a traite $T \xrightarrow{ \, \tau \, } P$ whose generic point is in $G$ and $\pi \tau$ factors through $E_{\mathscr{F}}$, then we have a surjection of flat $T$-sheaves,
\begin{equation}
\label{end.14}
{\mathscr F} \mid_T \ \twoheadrightarrow \widetilde{\mathscr F} \mid_T
\end{equation}
which is an isomorphism on the generic point, so $\tau$ factors through $G$ and $\gamma$ of \eqref{end.12} is proper. At this point the only issue in concluding to a a compactification
of all of $E_1$ is how to deal with any components whose reduction
might factor through $\mathfrak{Z}$. Certainly, we could just appeal
to \thref{flatP:fact6} and repeat \thref{end:claim101} with $\mathfrak{Z}$
replaced by the closure of their image, but the way to do it, which
respects smooth base change, is to flatten $\mathscr{F}_{\mathfrak{Z}_1}$,
by way of $\widetilde{\mathfrak{Z}}_1\to \mathfrak{Z}_1$, so that 
taking proper transforms picks up any components whose reduction
doesn't factor through the locus $\mathfrak{Z}_2\hookrightarrow
\mathfrak{Z}_1$ which supports the modification, then do \eqref{end.12}
et seq. again,  then flatten $\mathscr{F}_{\mathfrak{Z}_2}$ etc..

Now before attempting to conclude the induction in $m$ of \thref{end:fact22},
observe that the first stage of flattening $\mathscr{F}_{\mathfrak{Z}}$ achieves
not just the part of $E_1$ afforded by the left hand corner of \eqref{end.12},
but, by the same argument, also the embedded primes of the said components which 
don't factor through the aforesaid $\mathfrak{Z}_1$, or equivalently, since 
the formation of the latter respects smooth base change, we've actually functorially
compactified the larger sub-scheme, $E_1^+$ say, of \thref{rmk:jap1}, defined
by the associated primes not contained in $\mathfrak{Z}_1$. In the above notation,
we finish, therefore, by way of the following refinement of our inductive proposition,

\begin{claim}\label{end:fact23}
Let everything be as in \thref{end:fact22}, then for the above decreasing chain
of (everywhere properly included) closed sub-schemes,
$\mathfrak{Z}_{\bullet}$,
respecting smooth base change, with, 
\begin{equation}\label{222}
\emptyset=E_0^+ \subset E_1^+ \subset \cdots E_n^+\subset E_{n+1}^+=E_{\mathcal{F}}
\end{equation}
the functorial filtration it affords, \thref{rmk:jap1},
by the rule no associated prime of $E_i^+$ factors through $\mathfrak{Z}_i$;
then for every $0\leq m\leq n$, each $E^+_i$, $0\leq i\leq m+1$ 
admits a compactification respecting smooth base change, \eqref{flatC.5}.
\end{claim}
We've just done $m=0$, 
and, to go from $m$ to $m+1$, we initially follow the strategy of \thref{end:claim101}
in flattening $\mathscr{F}_{\mathfrak{Z}_{m}}$ by way of 
$S_m\hookleftarrow \widetilde{\mathfrak{Z}}_{m}\to
\mathfrak{Z}_{m}$, i.e. the restriction $\mathscr{F}_m\mid_{\widetilde{\mathfrak{Z}}_m}$
of the proper transform is flat. Consequently, \cite{ega4}[Th\'eor\`eme 11.4.1], applies as
in the proof of \thref{end:claim101} to conclude that
any associated primes of $E_{\mathscr{F}}$ which don't factor through $\mathfrak{Z}_m$
has a well defined Zariski closure which we'll employ in the form
that we have an open embedding,
\begin{equation}\label{223}
\widehat{E}_{\mathscr{F}}\backslash{\mathfrak{Z}_{m+1}}\hookrightarrow \widehat{E}_{\mathscr{F}_m},
\end{equation}
wherein $\widehat{\bullet}$ is completion in the trace of 
$\mathfrak{Z}_m$, resp. $\widetilde{\mathfrak{Z}}_m$.
Equally, if we apply the inductive proposition to construct a compactification
$P_m\to S_m$ 
of the fitted flatifier of $\mathscr{F}_m$, up to level $m$, then,
by construction the completion, $\widehat{P}_m$ of $P_m$ in the (obvious) section over
$\widetilde{\mathfrak{Z}}_m$ is,
\begin{equation}\label{224}
(\widehat{E}_{\mathscr{F}_m})_m^+
\end{equation}
which is a closed formal sub-scheme of the right hand side of \eqref{223}, 
so we can form (as sheaves over $S_m$ is fine) the push-out,
\begin{equation}\label{225}
(\widehat{E}_{\mathscr{F}_m})_m^+\rightrightarrows P_m\coprod \widehat{E}_{\mathscr{F}_m}
\rightarrow P
\end{equation}
which differs from $P_m$ in exactly the extra nilpotent structure afforded
by the difference between \eqref{224} and the right hand side of \eqref{223}.
In particular the open subset $G\hookrightarrow P$ where $\mathscr{F}$ is
flat now (left hand inclusion in \eqref{223}) dominates $E_{m+1}^+\hookrightarrow E_{\mathscr{F}}$ defined by
associated primes which don't factor through $\mathfrak{Z}_{m+1}$. As such
we can apply \thref{flatC:lem1} exactly as in \eqref{end.12} et seq. to
conclude.
\end{proof}
Putting this together we have, therefore, the obvious corollary,

\begin{cor}\thlabel{end:cor1}
Let $f : {\mathcal X} \to {\mathcal S}$ be a quasi-compact map of formal champ in the smooth topology,
with $\mathcal{S}$ universally consistent in a Japanese way, \thref{con:defn1},
and ${\mathscr F}$ a coherent sheaf on ${\mathcal X}$ enjoying $Z_f(\mathscr{F})/\mathcal{S}$ proper, then the sequence of blow ups in the fitted flatifier of \eqref{end.3} exists and leads to a modification $\rho : \widetilde{\mathcal S} \to {\mathcal S}$ by way of a finite sequence of blow ups (nowhere supported in a minimal prime) such that the flatifier $F_{\widetilde{\mathscr F}} \to \widetilde{\mathcal S}$ of the proper transform of ${\mathscr F}$ is an embedded closed sub-champ with the same reduction.
\end{cor}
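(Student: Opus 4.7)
The plan is to combine \thref{end:fact2} and \thref{end:fact1} directly: the former produces the individual blow up in the fitted flatifier whenever the hypotheses of \thref{end:defn1} are met, and the latter then guarantees that the iteration \eqref{end.3} terminates with the flatifier of the final proper transform everywhere dense. The work therefore reduces to checking that the hypotheses of \thref{end:defn1} hold at every stage of the sequence.

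First I would reduce to the affinoid case. By the locality clause at the end of \thref{end:defn1} together with the quasi-compactness of ${\mathcal S}$, existence of the blow up in the fitted flatifier needs only be tested on a finite smooth cover by affinoids $S = \mathrm{Spf}(A)$, each of which is consistent in a Japanese way via \thref{con:defn1}. This puts us precisely in the framework of \thref{end:fact2}, which then supplies the first blow up $\rho_{{\mathcal S}} : {\mathcal S}_1 \to {\mathcal S}_0 = {\mathcal S}$ and its proper transform ${\mathscr F}_1$.

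Next I would verify inductively that the hypotheses of \thref{end:defn1} propagate. At stage $i+1$ we have a nowhere dense modification $\rho_{{\mathcal S}_i} : {\mathcal S}_{i+1} \to {\mathcal S}_i$ and the proper transform ${\mathscr F}_{i+1}$ defined by \eqref{end.2}; universal purity of the non-flatness is hypothesised at the outset and is stable both under modification and under passage to the proper transform, so the flatifier $F_{{\mathscr F}_{i+1}}$ again exists, and the compactifiability required in \thref{end:defn1} is provided by re-applying \thref{end:fact2} to $({\mathcal S}_{i+1}, {\mathscr F}_{i+1})$. An easy induction then furnishes every blow up in \eqref{end.3}, whereupon \thref{end:fact1} terminates the sequence in finitely many steps and delivers $\widetilde{\mathcal S}, \widetilde{\mathscr F}$ with $F_{\widetilde{\mathscr F}} \hookrightarrow \widetilde{\mathcal S}$ everywhere dense.

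The main obstacle --- in so far as there is one, the corollary being advertised as obvious --- is to articulate the stability of universal purity under the combined operation of modification and proper transform, so that the inductive step actually closes; once that is in hand, the result is pure concatenation of previously established facts.
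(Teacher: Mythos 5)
Your proposal is correct and is essentially the paper's own argument: the proof there is literally ``Immediate by \thref{end:fact1} and \thref{end:fact2}.'' Your extra remarks on reducing to affinoids and on the stability of universal purity under modification and proper transform merely spell out what the paper leaves implicit in the word ``immediate,'' and do not constitute a different route.
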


\begin{proof}
Immediate by \thref{end:fact1} and \thref{end:fact2}.
\end{proof}

Of which a pleasing consequence is,

\begin{cor}\thlabel{end:cor2}
Let everything be as in \thref{end:cor1}, and suppose moreover that $f$ is
proper, then $f$ is surjective in the sense of \thref{A:mod}.(3) iff
$\mathscr{O}_{\mathcal{S}_{\mathrm{red}}}\to f_*\mathscr{O}_{\mathcal{X}_{\mathrm{red}}}$
is injective.
\end{cor}
\begin{proof}
The only if direction is immediate. Conversely, without loss of generlity,
everything is reduced by \thref{blow:fact3}, so, in
applying \thref{end:cor1}, we have a commutative diagram,
\begin{equation}\label{226}
\begin{CD}
{\mathcal{X}}@<\rho<<\widetilde{\mathcal{X}}\\
@V f VV @VV f V \\
{\mathcal{S}}@<\rho<<\widetilde{{\mathcal{S}}},
\end{CD}
\end{equation}
in which the horizontals are nowehere schematically dense blow ups, 
and the righmost vertical is flat. In particular,
whether by hypothesis or, since the $\rho$'s are algebraisable, by, 
\cite{knutson}, we have injections,
\begin{equation}\label{227}
\mathscr{O}_{\mathcal{S}}\hookrightarrow 
f_*\mathscr{O}_{\mathcal{X}}\hookrightarrow f_*\rho_*=\rho_*f_* \mathscr{O}_{\widetilde{\mathcal{X}}}
\end{equation}
As such, if the injectivity condition of \thref{end:cor2} fails
for the rightmost $f_*$ in \eqref{226}, then by the algebraisability
of the $\rho$'s and \cite{ega3}[Th\'eor\`eme 3.4.2], there is a
coherent sheaf on $\widetilde{\mathcal{S}}$ with proper support
which dominates $\mathcal{S}$. This is, however, plainly nonsense,
and since the lower, like the upper, $\rho$ is injective in the sense 
of \thref{A:mod}, we may, therefore suppose that $f$ is flat, and
$\mathcal{S}$ is the formal
spectrum of a D.V.R.. Now over the closed point, $s$, choose $x$ in the 
(non-empty open) locus where the fibre $\mathcal{X}_s$ is CM. As such,
the liftings, $t_i$, of a set of parameters from the fibre to $\mathscr{O}_{\mathcal{X},x}$
define a regular sequence, so that $t_i=0$ is flat and finite over $R$,
and is, therefore, the required lifting.
\end{proof}

%\tableofcontents

%\end{document}

%%%%%%%%%%%%%%%

\vglue 1cm

\section{Algebraisation of formal deformations}\label{S:alg}

Our goal is criteria for the algebraisation of formal deformations of algebraic spaces, and to which end we introduce our,

\begin{setup}\thlabel{alg:setup1}
Throughout this section $A$ will be a Noetherian ring complete in an $I$-adic topology which is universally consistent in a Japanese way, \thref{con:defn1}, with, 
\begin{equation}
\label{alg1}
A_m = A / I^{m+1}\, , \ S_m = {\rm Spec} \, A_m \hookrightarrow S = {\rm Spf} (A) \, , \quad m \geq 0 \, ,
\end{equation}
and $\pi : {\mathfrak X} \to S$ a proper formal algebraic space with special fibre $X_0$, while, more generally, for a coherent sheaf ${\mathscr F}$ on ${\mathfrak X}$ we put,
\begin{equation}
\label{alg1.bis}
{\mathscr F}_m = {\mathscr F} \times_S S_m \, .
\end{equation}
\end{setup}

Our goal is to establish algebraisation by counting sections of sheaves, and to this end we make,

\begin{defn}\thlabel{alg:defn1}
Let ${\mathfrak X} / S$ be as in \thref{alg:setup1} then a line sheaf on ${\mathfrak X}$ is a coherent sheaf ${\mathscr L}$ with generic rank $1$, in the sense of \thref{rank:fact1}.
\end{defn}

With this notation our initial goal is,

\begin{babyCrit}\thlabel{alg:baby1}
For each reduction of each irreducible component, there is a proper modification, \thref{A:mod}, $S_i \to S$ of the image such that for $\pi_i : {\mathfrak X}_i \to S_i$ the proper transform of the component and $d_i = \dim {\mathfrak X}_i - \dim S_i$ there is a line sheaf ${\mathscr L}_i$ on ${\mathfrak X}_i$ for which the generic rank, $r_i(n)$, of $\pi_* \, {\mathscr L}_i^{\otimes n}$ satisfies the growth estimate,
\begin{equation}
\label{alg1.new}
\liminf_{n \to \infty} \ n^{-d_i} r_i (n) > 0 \, .
\end{equation}
\end{babyCrit}

In this generality there is a slew of things that threaten not to have sense, so let's make;

\begin{CheckList}\thlabel{alg:list1}
The precise definition/demonstration of well definedness of the hypothesis of \thref{alg:baby1} are,

\smallskip

\noindent (A) For any closed subspace ${\mathfrak Y}$ of ${\mathfrak X}$, $\pi_* {\mathscr O}_{\mathfrak Y}$ is a finite ${\mathscr O}_S$-module by \cite{ega3}[Théo\-rème 3.4.2], whose support is, by definition, the image of ${\mathfrak Y}$.

\smallskip

\noindent (B) By \thref{A:mod}, a modification comes equipped with a closed subspace $T_i \hookrightarrow S_i$ which étale locally has no support in an irreducible component off which we have an isomorphism in the sense of \thref{A:artin}. However, by hypothesis, ${\mathfrak X}_i$ and $S_i$ are reduced and globally irreducible, so, by \thref{rank:lem2} checking ``everywhere non-denseness'' is easy, i.e. iff $T_i \ne S_i$.

\smallskip

\noindent (C) In particular the proper transform ${\mathfrak X}_i$ of the initial component ${\mathfrak Y}_i$ is by definition the fibre modulo $T_i$ torsion, and this is still a modification, by another application of \thref{rank:lem2}, of ${\mathfrak Y}_i$ because the fibre over $T_i$ cannot dominate ${\mathfrak Y}_i$.

\smallskip

\noindent (D) Finally, therefore, \thref{rank:bonus1} applies to conclude that $\pi_* \, {\mathscr L}_i^{\otimes n}$ has constant rank $r_i (n)$ on any cover of ${\mathfrak X}_i$ by affinoids, and, better still, this is even equal to the rank of its completion at a generic point of ${\mathfrak X}_i$.
\end{CheckList}

In this context, albeit independently of \thref{alg:defn1}, we have,

\begin{lem}\thlabel{alg:lem0}
Let ${\mathfrak X}/S$ be as in \thref{alg:setup1} then ${\mathfrak X}$ is algebraisable iff its reduction ${\mathfrak X}_{\rm red}$, \thref{con:fact1}, is algebraisable. 
\end{lem}

\begin{proof}
In the presence of the consistency conditions implied by \thref{alg:setup1}, ${\mathfrak X}_{\rm red}$ is defined, \thref{con:fact1}, by a coherent sheaf of ideals on ${\mathfrak X}$, so, the converse follows from \cite{knutson} irrespectively of whether reduction commutes with completion.

Otherwise ${\mathscr O}_{\mathfrak X}$ enjoys a finite filtration by ideals $F^p = (F^1)^p$ with successive quotients coherent ${\mathscr O}_{{\mathfrak X}_{\rm red}}$ modules, and $F^1$ the nil radical. As such let ${\mathfrak X}_p \hookrightarrow {\mathfrak X}$ be cut out by $F^p$, and proceed by induction on $p \geq 1$. In particular we have a short exact sequence,
\begin{equation}
\label{alg2.new}
0 \longleftarrow {\mathscr O}_{{\mathfrak X}_p} \longleftarrow {\mathscr O}_{{\mathfrak X}_{p+1}} \longleftarrow J \longleftarrow 0 \, , \quad J^2 = 0 \, ,
\end{equation}
which more likely than not fails to be a short exact of coherent ${\mathscr O}_{{\mathfrak X}_p}$-modules so we have some work to do to reduce to \cite{knutson}, to wit: 

\begin{claim}\thlabel{alg:claim0}
Let ${\mathfrak X}$ be a formal algebraic space, or a Deligne-Mumford champ, formally of finite type over a base $S$, $J$ a coherent sheaf of ${\mathscr O}_{\mathfrak X}$ modules, and $U \mapsto E(U)$ the pre-sheaf of sets whose elements are ring extensions of the form,
\begin{equation}
\label{alg21.new}
0 \longleftarrow {\mathscr O}_U \longleftarrow {\mathscr O}_Y \longleftarrow J \longleftarrow 0 \, , \quad J^2 = 0
\end{equation}
then,
\begin{enumerate}
\item[(1)] $E$ has the structure of an ${\mathscr O}_{\mathfrak X}$ module, and the associated sheaf, ${\mathscr E}$, is a coherent ${\mathscr O}_{\mathfrak X}$ module.
\item[(2)] The obstruction to an element of ${\rm H}^0 ({\mathfrak X} , {\mathscr E})$ defining a global section of $E$ is given by an ${\mathscr O}_S$-linear map,
\begin{equation}
\label{alg22.new}
{\rm obs} : {\rm H}^0 ({\mathfrak X},{\mathscr E}) \longrightarrow {\rm H}^2 ({\mathfrak X} , {\mathcal H}om_{{\mathscr O}_{\mathfrak X}} (\Omega_{{\mathfrak X}/S} , J)) \, .
\end{equation}
\item[(3)] The kernel of the obstruction group in \eqref{alg22.new} is formally (i.e. could be empty) a principal homogeneous space under,
\begin{equation}
\label{alg23.new}
{\rm H}^1 ({\mathfrak X} , {\mathcal H}om_{{\mathscr O}_{\mathfrak X}} (\Omega_{{\mathfrak X}/S} J)) \, .
\end{equation}
\end{enumerate}
\end{claim}

\begin{proof}[Proof of \thref{alg:claim0}]
The ${\mathscr O}_{\mathfrak X}$ module structure on $E$ is much the same as that for coherent sheaves, i.e. if ${\mathscr O}_{Y_1} , {\mathscr O}_{Y_2}$ are extensions then their sum is,
\begin{equation}
\label{alg24.new}
{\mathscr O}_{Y_1} \oplus {\mathscr O}_{Y_2} := {\mathscr O}_{Y_1} \times_{{\mathscr O}_\mathfrak{X}} {\mathscr O}_{Y_2} / \Delta^- (J) \, , \quad \Delta^- (x) = (x,-x)
\end{equation}
wherein $\times_{{\mathscr O}_{\mathfrak X}}$ is fibre product of rings, while the multiplication of an extension ${\mathscr O}_Y$ by $f$ is,
\begin{equation}
\label{alg25.new}
f \bullet {\mathscr O}_Y := {\mathscr O}_Y [J] / (j,-fj) \, .
\end{equation}
In order to check that the associated sheaf, ${\mathscr E}$, is coherent, one uses the finite type hypothesis to write ${\mathscr O}_{\mathfrak X}$ locally as a quotient of,
\begin{equation}
\label{alg26.new}
{\mathscr O}_{P} := {\mathscr O}_S \{T_1 , \cdots , T_d\} ({\rm modulo} \, I)
\end{equation}
so that any extension ${\mathscr O}_Y$ of the form \eqref{alg21.new} is the quotient of the trivial extension,
\begin{equation}
\label{alg27.new}
{\mathscr O}_P [J] / (x,\varepsilon (x) \mid x \in I) \, , \quad \varepsilon \in {\mathcal H}om_{{\mathscr O}_{\mathfrak X}} (I,J)
\end{equation}
while maps from ${\mathscr O}_{P}$ to ${\mathscr O}_{P}[J]$ are,
\begin{equation}
\label{alg28.new}
{\mathcal H}om_{{\mathscr O}_{\mathfrak X}} (\Omega_{P/S} \mid_{\mathfrak X} , J) \, .
\end{equation}

As such, locally, the ${\mathscr O}_{\mathfrak X}$ module structure on $E$ is exactly that of,
\begin{equation}
\label{alg29.new}
{\mathcal H}om_{{\mathscr O}_{\mathfrak X}} (I/I^2 , J) \mod {\mathcal H}om_{{\mathscr O}_{\mathfrak X}} (\Omega_{P/S} \mid_{\mathfrak X} , J) \, .
\end{equation}

As to item (2), by definition, a global section of ${\mathscr E}$ is an algebra of the form \eqref{alg21.new} over an open cover $U \to {\mathfrak X}$ together with an isomorphism,
\begin{equation}
\label{alg30.new}
g : p_1^* {\mathscr O}_Y \xrightarrow{ \ \sim \ } p_2^* {\mathscr O}_Y \, , \quad p_i : U \times_{\mathfrak X} U \to U
\end{equation}
of extensions of ${\mathscr O}_{U \times_{\mathfrak X} U}$. This will, however, only yield a global extension if we have the co-cycle condition,
\begin{equation}
\label{alg31.new}
p_{13}^* (g) = p_{12}^* (g) \, p_{23}^* (g) \, , \quad p_{ij} : U \times_{\mathfrak X} U \times_{\mathfrak X} U \to U \times_{\mathfrak X} U
\end{equation}
and the failure of \eqref{alg31.new} in, say, $p_1^* {\mathscr O}_Y$ is up to a choice in $\{1,2,3\}$ the obstruction \eqref{alg22.new}, while up to isomorphism, the possible solutions of \eqref{alg31.new} are a principal homogeneous space under \eqref{alg23.new}.
\end{proof}

Returning to the proof of \eqref{alg:lem0}, we know by \cite{knutson} that the category of coherent ${\mathscr O}_{{\mathfrak X}_p}$ modules is equivalent to that on the algebraisation, so ${\mathfrak X}_{p+1}$ is algebraisable by \eqref{alg2.new} and \thref{alg:claim0}.
\end{proof}

This will allow us to apply the main flattening theorem in establishing,

\begin{lem}\thlabel{alg:lem1}
Suppose there is a proper birational modification $f : {\mathfrak X}' \to {\mathfrak X}$, \thref{A:mod}, such that ${\mathfrak X}'$ is algebraisable, then ${\mathfrak X}$ is algebraisable.
\end{lem}

\begin{proof}
By \thref{alg:lem0} we may suppose that ${\mathfrak X}$ is reduced, and we proceed by induction on $\dim {\mathfrak X}$ with the case of dimension $0$ relative to $S$ being fairly trivial, i.e. anything finite over $S$ is algebraisable. Otherwise, by hypothesis, there is a closed formal subspace ${\mathfrak Z} \hookrightarrow {\mathfrak X}$ which does not contain any irreducible component and off which $\rho$ is \'etale and ``injective'' in the sense of Artin's criteria \thref{A:artin}. Now let ${\mathfrak Y} \hookrightarrow {\mathfrak X}'$ be the fibre over ${\mathfrak Z}$ then by \cite{knutson} not only is ${\mathfrak Y}$ algebraisable but so is,
\begin{equation}
\label{alg9.new}
{\mathfrak Y} \leftleftarrows {\mathfrak Y} \times_{\mathfrak Z} {\mathfrak Y} \hookrightarrow {\mathfrak X}' \times_{\mathfrak X} {\mathfrak X}' \hookrightarrow {\mathfrak X}' \times_A {\mathfrak X}' \, .
\end{equation}
As such there is a relation,
\begin{equation}
\label{alg10.new}
R \rightrightarrows Y
\end{equation}
in algebraic spaces over $A$ whose completion is the relation on the left of \eqref{alg9.new}. Plausibly, this doesn't lead to an algebraic space $R/Y$. We can however apply \thref{end:cor1}, and the fact that ${\mathcal Z}$ is reduced to get a diagram,
\begin{equation}
\label{alg11.new}
\xymatrix{
&{\mathfrak X}'_1 \ar[ld] \ar[d] & \ {\mathfrak Y}_1 \ar@{_{(}->}[l] \ar[ld] \ar[d] \\
{\mathfrak X}' \ar[d] &\Atop{\mbox{${\mathfrak Y}$}}{\mbox{${\mathfrak X}_1$}}\ar@{_{(}->}[l] \ar[ld] \ar[d] &{\mathfrak Z}_1 \ar@{_{(}->}[l] \ar[ld] \\
{\mathfrak X} &{\mathfrak Z} \ar@{_{(}->}[l]
}
\end{equation}
in which ${\mathfrak X}_1$ is ${\mathfrak X}$ blown up in the sequence of centres that lead to the flattening in the right most vertical face of \eqref{alg11.new}. In particular the relation ${\mathfrak Y}_1 \times_{{\mathfrak Z}_1} {\mathfrak Y}_1 \rightrightarrows {\mathfrak Y}_1$ is now not only algebraisable but flat so ${\mathfrak Z}_1$ is algebraisable by \cite{morikeel}, and whence ${\mathfrak Z}$ by our inductive hypothesis.

Consequently another application of \cite{knutson} tells us that the map ${\mathfrak Y} \to {\mathfrak Z}$ itself is algebraisable. To conclude, however, to the algebraicity of ${\mathfrak X}$ by \cite{artin} we need to know this for the fibres,
\begin{equation}
\label{alg12.new}
{\mathfrak Y}_m = \rho^{-1} ({\mathfrak Z}_m) \longrightarrow {\mathfrak Z}_m
\end{equation}
over every $m^{\rm th}$ thickening of ${\mathfrak Z} \hookrightarrow {\mathfrak X}$. Plainly we aim to simply repeat the above steps for ${\mathfrak Z}_0 = {\mathfrak Z}$, and the only point for care is that \eqref{alg12.new} can, \thref{end:cor1}, be flattened by blowing up in a sequence of nowhere dense centres iff it's flat over every generic point, which, indeed is the case since the limit of \eqref{alg12.new} is the fibre over the completion, $\widehat{\mathfrak Z}$, of ${\mathfrak X}$ in ${\mathfrak Z}$ which is reduced since all the local rings resulting from \thref{alg:setup1} are excellent.
\end{proof}

Which in turn we can employ to deduce,

\begin{lem}\thlabel{alg:lem2}
Let everything be as in \thref{alg:baby1}, then without loss of generality the line sheaf ${\mathscr L}$ is actually a line bundle satisfying \eqref{alg1.new}.
\end{lem}

\begin{proof}
We may apply \thref{end:cor1} to the line sheaf ${\mathscr L}$ of \thref{alg:defn1} and the identity map $f = {\rm id}_{\mathfrak X}$ to get a birational modification ${\mathfrak X}' \to {\mathfrak X}$ such that the proper transform ${\mathscr L}'$ of ${\mathscr L}$ is flat over ${\mathfrak X}'$ and whence, by item (B) of op. cit., it is a line bundle. Better still we have a natural injective map,
\begin{equation}
\label{alg6}
{\rm H}^0 ({\mathfrak X} , {\mathscr L}) \hookrightarrow {\rm H}^0 ({\mathfrak X}' , {\mathscr L}')
\end{equation}
so ${\mathscr L}'$ satisfies the estimate \eqref{alg1.new} because ${\mathfrak X}'$ and ${\mathfrak X}$ have the same irreducible components. As such, to conclude, we just resolve the base points of $\vert {\mathscr L}'\vert$ in the usual way.
\end{proof}

At which juncture we can give:

\begin{proof}[Proof of the \thref{alg:baby1}] 
The map,
\begin{equation}
\label{alg13.new}
\coprod_i {\mathfrak X}_i \longrightarrow {\mathfrak X}
\end{equation}
from the disjoint union of the irreducible components is a birational modification so by \thref{alg:lem0} and \thref{alg:lem1} we may suppose that ${\mathfrak X}$ is reduced and globally irreducible.

Similarly, we may, therefore also suppose that $S$ is a modification of ${\rm Spf} (A)$ of \thref{alg:setup1}. As such \thref{end:cor1} applies to find a flattening of $S \to {\rm Spf} (A)$ by a sequence of blow ups $\widetilde S \to {\rm Spf} (A)$ without support in a minimal prime, so the proper transform of $S$ along $\widetilde S \to {\rm Spf} (A)$ is $\widetilde S$ and $S$ is algebraisable by another application of \thref{alg:lem1}. Better still we might as well say $S = \widetilde S$, so, without loss of generality, it's even the $I$-adic completion of a scheme, while by a further application of \thref{end:cor1} we may, equally, suppose $\pi : {\mathfrak X} \to S$ is flat of relative dimension $d$.

In any case for each $n \geq 0$, the image of the natural maps,
\begin{equation}
\label{f251}
\pi_* \, L^n \otimes_{{\mathscr O}_S} {\mathscr O} \longrightarrow L^n
\end{equation}
define an ideal ${\mathscr I}_n$, so that for $\widetilde{\mathfrak X}_n \to {\mathfrak X}$ the blow up in the same we get maps,
\begin{equation}
\label{f252}
\xymatrix{
&\widetilde{\mathfrak X}_n \ar[ld] \ar[d] \\
{\mathfrak X} &P_n := {\mathbb P} (\widehat{\pi_* \, L^n})
}
\end{equation}
wherein by \cite{ega3}[Theorème 5.1.4] we can identify the universal widget for rank $1$ quotients of $\pi_* \, L^n$ in the formal category with the completion of the scheme, ${\mathbb P} (\pi_* \, L^n)$. Irrespectively the vertical arrow in \eqref{f252} is a proper map whose image has a well defined dimension $d_n \leq d$ and we assert,

\begin{claim}\thlabel{claim:f251}
The maximum $\delta$ over $n > 0$ of $d_n$ is $d$. 
\end{claim}

If we can prove \thref{claim:f251} then, more or less plainly, we're done. Indeed the base ${\mathscr I}_n$ of \eqref{f251} cannot vanish anywhere locally along a minimal prime by \thref{rank:lem2}, so the blow up in \eqref{f252} is a modification in the sense of \thref{A:mod}, thus we may apply \thref{alg:lem1} yet again to suppose that we have,
\begin{equation}
\label{alg14.new}
\xymatrix{
{\mathfrak X} \ar[d]_{\sigma} \\
{\mathcal Y} \ \ar@{^{(}->}[r] &P_n
}
\end{equation}
wherein, by \thref{claim:f251}, $\sigma$ is a map of relative dimension $0$ between reduced and irreducible varieties, so \thref{end:cor1} applies to find a flattening,
\begin{equation}
\label{alg15.new}
\xymatrix{
{\mathfrak X}' \ar[d]_{\sigma'} \ar[r] &{\mathfrak X} \ar[d]^{\sigma} \\
{\mathcal Y}' \ar[r] &{\mathcal Y}
}
\end{equation}
by a sequence of blowing ups. As such ${\mathcal Y}'$ is algebraisable, and $\sigma'$ is finite, so ${\mathfrak X}'$ is algebraisable, and whence ${\mathfrak X}$ by a final application of \thref{alg:lem1}.
\end{proof}

It therefore remains to prove \thref{claim:f251}, and despite appearances this isn't quite as obvious as it seems. The problem is that the scheme case of op. cit. appears to need closed points off the base locus, and, a priori we may not have any. As such we preceed by a series of simpler assertions beginning with,

\begin{subclaim}\thlabel{sub:f251}
Without loss of generality $d_n$ of \thref{claim:f251} is constant $<d$.
\end{subclaim}

\begin{proof}
Suppose \thref{claim:f251} is false, and replace $L$ by $L^n$ for some $n \gg 0$.
\end{proof}

\begin{subclaim}\thlabel{sub:f252}
If either we complicate the statement of \thref{alg:baby1} to allow not just $\pi_* \, L^n$ but any graded sub-algebra,
\begin{equation}
\label{261}
\coprod_n {\mathscr E}_n \longhookrightarrow \coprod_n \pi_* \, L^n
\end{equation}
with graded pieces of rank satisfying the growth estimate \eqref{alg1.new}, or, more conveniently, restrict ourselves to proving \thref{alg:baby1} rather than \thref{claim:f251}, we may suppose that the trace, $S_0 \hookrightarrow S$, is the support of a Cartier divisor which is everywhere of pure co-dimension 1.
\end{subclaim}

\begin{proof}
If the trace has support at a minimal prime somewhere then by \thref{rank:lem2} the topology of \thref{alg:setup1} is discrete, and we're done, while otherwise we may legitimately blow up in it by \thref{alg:lem1}. Plainly the growth estimate of \eqref{alg1.new} still holds after blowing up, albeit the original graded algebra may become a sub-algebra as in \eqref{261}.
\end{proof}

\begin{subclaim}\thlabel{sub:f253}
Without loss of generality none of the base loci, ${\mathscr I}_n$, of \eqref{f251} are supported on the trace $X_0$ of ${\mathfrak X}$.
\end{subclaim}

\begin{proof}
By \thref{sub:f252} and the flatness of ${\mathfrak X}/S$ we may identify $X_0$ not just with the Cartier divisor $\pi^* S_0$ but for every generic point $\xi$ of $X_0$, $X_0$ is reduced of pure co-dimension $1$. In particular, therefore, any minimal prime $D$ of ${\mathscr O}_{X,\xi}$ has a well defined order function whose value $v_D (X_0)$, resp. $v_D ({\mathscr I}_1)$, i.e. the minimum over a set of generators, satisfies
\begin{equation}
\label{262}
\infty > v_D (X_0) > 0 \, , \quad \mbox{resp.} \ \infty > v_D ({\mathscr I}_1) \geq 0
\end{equation}
since, otherwise, \thref{rank:lem2} would apply. Now by definition, ${\mathscr I}_1$ cannot be contained in ${\mathscr O}_{\mathfrak X} (-X_0)$, but it could have the same support, in which case there would be a minimal $e_1$ such that,
\begin{equation}
\label{263}
{\mathscr I}_1^{e_1} \subseteq {\mathscr O}_{\mathfrak X} (-X_0)
\end{equation}
and a corresponding maximal $n_1 \geq 1$ such that,
\begin{equation}
\label{264}
{\mathscr I}_1^{e_1} \subseteq {\mathscr O}_{\mathfrak X} (-n_1 \, X_0), \quad \mbox{but} \ {\mathscr I}_1^{e_1} \underset{\neq}{\subseteq} {\mathscr O}_{\mathfrak X} (-(n_1+1) \, X_0) \, .
\end{equation}
As such we'd have a submodule $\Gamma_2$ of $\pi_* \, L^{e_1}$ and an isomorphism,
\begin{equation}
\label{265}
\Gamma_1 := \pi_* \, L \longrightarrow \Gamma_2 (-n_1 \, X_0) : \ell \longmapsto \ell^{e_1} \, .
\end{equation}
In particular, therefore, we get a base locus defined by,
\begin{equation}
\label{266}
\pi^* \, \Gamma_2 \otimes {\mathscr O}_{\mathfrak X} \ -\!\!\!\twoheadrightarrow L^{e_1} \cdot \gamma_2
\end{equation}
which is either without support on $X_0$, or we get a new pair $e_2 , n_2$ satisfying,
\begin{equation}
\label{267}
\gamma_2^{e_2} \subseteq {\mathscr O}_{\mathfrak X}  (-n_2 \, X_0) \, , \ \mbox{etc.}
\end{equation}
As such were this situation to continue indefinitely, then we'd get a sequence of ideals $\gamma_q$, satisfying,
\begin{equation}
\label{268}
v_D (\gamma_q) = \frac{n_q}{e_q} \, v_D (X_0) + v_D (\gamma_{q+1})
\end{equation}
for any $v_D$ as found in \eqref{262}. By local Noetherianity and quasi-compactness, however, the $e_q$ are universally bounded, so, an infinite sequence satisfying \eqref{268} contradicts \eqref{262}, and whence on replacing $L$ by $L^{e_1 \cdots e_N}$, for some $N \gg 0$, we have that the initial base locus isn't supported on $X_0$.
\end{proof} 

\begin{subclaim}\thlabel{sub:f254}
Modulo the same caveat implied by \eqref{261} in the preliminaries to \thref{sub:f252} we can suppose that the vertical arrow in \eqref{alg14.new} (with $n=1$, by the way) is flat.
\end{subclaim}

\begin{proof}
Exactly as in \eqref{alg15.new} and with the notation therein, flatten the vertical in \eqref{alg14.new} along a sequence of blow ups $\rho : {\mathfrak Y}' \to {\mathfrak Y}$. In particular, therefore, there is a very ample divisor, $A$, on ${\mathfrak Y}'$ of the form,
\begin{equation}
\label{269}
{\mathscr O}_{P_1} (a) \mid_{{\mathfrak Y}'} (-E)
\end{equation}
wherein $a \gg 0$, and ${\mathscr O}_{{\mathfrak Y}'} (-E)$ is $\rho$-very ample. As such the pull-back of $A$ to ${\mathfrak X}'$ is,
\begin{equation}
\label{270}
{\mathscr O}_{{\mathfrak X}'} (aL - aB -E) 
\end{equation}
for $B$, the supposed, without loss of generality, Cartier base of the linear system $\pi_* \, L$. In particular, therefore, for $a \gg 0$, the sections of powers of,
\begin{equation}
\label{271}
L' := {\mathscr O}_{{\mathfrak X}'} (aL -E) 
\end{equation}
satisfy the growth estimate \eqref{alg1.new}. Plausibly it might well be true that the inclusion,
\begin{equation}
\label{272}
\pi_* \, A \longhookrightarrow \pi_* \, L'
\end{equation}
implied by \eqref{270} is strict, so $\sigma' : {\mathfrak X}' \to {\mathfrak Y}'$ is only a projection of the map to ${\mathbb P} (\pi_* \, L')$, but this is unimportant, since all we need is that ${\mathfrak Y}'$ achieves the maximal dimension in \thref{claim:f251}.
\end{proof}

\begin{subclaim}\thlabel{sub:f255}
Again with the caveat that the image ${\mathfrak Y}$ in \eqref{alg14.new}, may only be a sub-linear system of $\pi_* \, L$, albeit with the same dimension as the image of the latter, \eqref{272}, we may suppose that ${\mathfrak Y}/S$ is flat.
\end{subclaim}

\begin{proof}
Just apply the flattening theorem, \thref{end:cor1}.
\end{proof}

\begin{subclaim}\thlabel{sub:f256}
For any $n > 0$, let ${\mathfrak Y}_n$ be the closed image of $\widetilde{\mathfrak X}_n$ in \eqref{f252}, and ${\mathfrak Y}'_n \to {\mathfrak Y}_1$ the projection of the graph afforded by the Segre map,
\begin{equation}
\label{273}
{\rm Sym}^n (\pi_* \, L) \longrightarrow \pi_* \, L^n
\end{equation}
then the (possibly empty) locus where the fibres are positive dimensional never contains a component of any complete local ring of ${\mathfrak Y}_n$. Idem for the projection ${\mathfrak Y}_1 \to {\mathfrak Y}$ of \eqref{272}.
\end{subclaim}

\begin{proof}
By \cite{ega3}[Théorème 5.1.4] all of ${\mathfrak Y} , {\mathfrak Y}_n$ are algebraisable, so the locus where the fibres are positive dimensional is a well defined global subspace, and, whence, \thref{rank:lem2} applies to profit locally from global irreducibility.
\end{proof}

\begin{subclaim}\thlabel{sub:f257}
Quite plausibly with the caveat implied by \eqref{261} in the preliminaries to \thref{sub:f252} we may suppose that $S = {\rm Spf} (A)$ where $A$ is a complete local reduced 1-dimensional integral domain in it's ${\mathfrak m}$-adic topology.
\end{subclaim}

\begin{proof}
Our goal is the dimension calculation of \thref{claim:f251}, which we may safely perform at a generic point $\sigma$ of $S$. As ever, \thref{hensel:con1}, ${\mathscr O}_{S,\sigma}$ needn't be complete, but it's $I$-adic completion, $\widehat{\mathscr O}_{S,\sigma}$, is its ${\mathfrak m}(\sigma)$-adic completion, and the fact that this is reduced 1-dimensional follows from \thref{sub:f252} and \thref{con:defn1}. Now if replacing $S$ by ${\rm Spf} \, \widehat{\mathscr O}_{S,\sigma}$ were the only operation that we were performing then the linear systems wouldn't change by the theorem of formal functions. However, to achieve \thref{sub:f256} we may very well need to take fibres over,
\begin{equation}
\label{273bis}
{\rm Spf} \, \widehat{\mathscr O}_{S,\sigma} \longleftarrow \coprod_{\alpha} \, {\rm Spf} \, A_{\alpha} \, ,
\end{equation}
where $A_{\alpha}$ are the components which could well change the linear systems involved, so we should take ${\mathscr E}_n$ in \eqref{261} to be the restriction to (any) $A_{\alpha}$ of the global linear systems, $\pi_* \, L^n$, defined over $S$. Plainly this has no effect on the generic rank estimate of \eqref{alg1.new} by \thref{rank:bonus1}.
\end{proof}

With these rather extensive preliminaries we can now give,

\begin{proof}[Proof of \thref{claim:f251}]
Let $s \in S$ be the closed point in \thref{sub:f256}, $x$ a closed point of the trace missing the base point of the (sub)-linear system of \eqref{272} as guaranteed by \thref{sub:f253}, and $y$ its image in ${\mathfrak Y}$ of \thref{sub:f255}. Now observe that any projective variety over a field is generically CM (just taken a projection to a projective space), thus, so is any algebraic space by Chow's lemma. In particular, therefore, we can suppose that the trace $X_0 \hookrightarrow {\mathfrak X}$ is CM at ${\mathfrak X}$, and whence by \cite{matsumura}[Theorem 23.3] (albeit see the proof rather than the statement] and the flatness of ${\mathfrak X}/{\mathfrak Y}$, a system of parameters at $x$, defines ${\mathfrak Z} \hookrightarrow {\mathfrak X}$ flat over ${\mathfrak Y}$ which is proper over the $I$-adic completion $\widehat{\mathscr O}_{{\mathfrak Y},y}$, so, the composition,
\begin{equation}
\label{274}
\widehat{\mathscr O}_{{\mathfrak Y},y} \longrightarrow \widehat{\mathscr O}_{{\mathfrak X},x} \longrightarrow \widehat{\mathscr O}_{{\mathfrak Z},x}
\end{equation}
is finite into the bargain. Now suppose for some $n$,
\begin{equation}
\label{275}
\widehat{\mathscr O}_{{\mathfrak Y}_n,y_n} \longrightarrow \widehat{\mathscr O}_{{\mathfrak Z},x}
\end{equation}
isn't injective. The ring on the left of \eqref{275} is reduced, so this holds iff it holds for the reduction of the right hand side. Better still if ${\mathfrak p}$ is a minimal prime of the left hand side of \eqref{275}, then its intersection ${\mathfrak q}$ with $\widehat{\mathscr O}_{{\mathfrak Y},y}$ is also a minimal prime by \eqref{276}, over which there are finitely many minimal primes ${\mathfrak q}_i$ of ${\mathfrak Z}$. As such there are maps,
\begin{equation}
\label{276}
\xymatrix{
{\mathfrak q} \backslash \widehat{\mathscr O}_{{\mathfrak Y},y} \ar@{^{(}->}[d] \ar[r] &{\mathfrak p} \backslash \widehat{\mathscr O}_{{\mathfrak Y}_n,y_n} \ar[d] \\
\widehat{\mathscr O}_{\widehat{\mathfrak Z} , x / {\mathfrak q}_i} &\prod_i \widehat{\mathscr O}_{{\mathfrak Z},x} / {\mathfrak q}_i \ar@{->>}[l]
}
\end{equation}
so if even the composition of the rightmost vertical with the bottom horizontal wasn't injective, then the image of the left vertical would be a ring of smaller dimension rather than a finite subring. Finally therefore \eqref{275} can be understood as a sub-module of a finite $\widehat{\mathscr O}_{{\mathfrak Y},y}$ module, which for $n \gg 0$ must become constant. This means that the cost of a section of $\mathrm{H}^0 (X_0 , L^n)$ vanishing to order $m$ at $x$ is,
\begin{equation}
\label{277}
O(m^{\delta})
\end{equation}
independently of $n$. On the other hand $A$ in \thref{sub:f256} is a complete local ring, so it admits a resolution of singularities $\widetilde S \to S$ by a complete D.V.R., and we can replace ${\mathcal E}_n \otimes_{{\mathscr O}_S} {\mathscr O}_{\widetilde S}$ by its saturation in $\pi_* \, L^n$ to conclude that,
\begin{equation}
\label{278}
\mathrm{h}^0 (X_0 , L^n) \gg n^d
\end{equation}
by the growth condition \eqref{alg1.new}, from which $d=\delta$.
\end{proof}

Plainly, however, we require a verifiable alternative to the \thref{alg:baby1}, to wit

\begin{PractCrit}\thlabel{alg:real1}
Let ${\mathfrak X}/S$ be as in \thref{alg:setup1} and suppose that each irreducible component ${\mathfrak X}_i/S$ admits an open set $U_i$ which is $S$-flat of relative dimension $d_i$ while admitting a line sheaf ${\mathscr L}_i$, \thref{alg:defn1}, which further satisfies,

\medskip

(a) On some, and whence any, modification, \thref{A:mod}, $\varphi_i : \widetilde{\mathfrak X}_i \to {\mathfrak X}_i$ with exceptional locus $E$, $\varphi_i^* \, {\mathscr L}_i$ modulo $E$-torsion, cf. \eqref{end.2}, is a nef. line bundle $L_i$.

\medskip

(b) For $X_i \hookrightarrow {\mathfrak X}_i$ the trace, the map afforded by global sections,
\begin{equation}
\label{alg3}
\mathrm{H}^0 (X_i , {\mathscr L}_i \mid_{X_i}) \otimes_{{\mathscr O}_{S_0}} {\mathscr O}_{X_i} \longrightarrow {\mathscr L}_i
\end{equation}
embeds a generic point of the trace of $U_i$.
\end{PractCrit}

We may apply \thref{alg:lem1}, \thref{alg:lem2}, \eqref{alg3}, \thref{blow:fact3}, and, of course \thref{end:cor1} to obtain the following,

\begin{simplification}\thlabel{alg:simp1}
Without loss of generality,
\begin{enumerate}
\item[(A)] ${\mathfrak X} = {\mathfrak X}_i$, and whence drop the index $i$ from the notation.
\item[(B)] ${\mathscr L}$ is a line bundle, written $L$ for clarity.
\item[(C)] ${\mathfrak X}/S$ factors through a reduced algebraisable modification $S'$ and ${\mathfrak X}/S'$ is flat.
\item[(D)] ${\mathfrak X}$ is reduced.
\end{enumerate}
\end{simplification}

Plainly our goal is to verify \thref{alg:baby1} for $L$ of \thref{alg:simp1}, and in a minor variant of \thref{sub:f256} we can suppose that $S = {\rm Spf} (A)$ is a complete D.V.R. by \thref{rank:bonus1}. In particular at the closed point $s \in S$, we have control on the cohomology by,

\begin{revision}\thlabel{rev:f251}
The higher cohomology of a nef line bundle $L$ on the proper algebraic space $X_s$ of dimension $d$ admits the estimate,
\begin{equation}
\label{f281}
\mathrm{h}^q (X_s , L^n) \ll n^{d-1} \, , \quad \forall \, q > 0 \, .
\end{equation}
\end{revision}

Consequently we get the same estimate for the generic rank of $R^q \, \pi_* \, L^n$ by \thref{count:fact2}, while by flatness the Euler characteristic is constant in the sense of op. cit., so the generic rank of $\pi_* \, L^n$ admits the estimate,
$$
\mathrm{h}^0 (X_s , L^n) + O(n^{d-1})
$$
which by the flatness hypothesis in item (b) of \thref{alg:real1} remains of order $n^d$ even after the flattening operation of \thref{alg:simp1}.

%\tableofcontents

%\end{document}

%%%%%%%%%%%%%%%

\vglue 1cm

\appendix
\section{Appendix}\label{S:A}

\subsection{Generalities on formal schemes}\label{SS:formal}

A priori formal schemes are built from admissible rings, \cite{ega0}[7.1.2] or \cite{formal}[1.2]. For our immediate, and highly Noetherian considerations, however, such a level of generality is not appropriate, since:

\begin{fact}\thlabel{formal:fact1}
Let $A$ be a Noetherian admissible ring, with $I$ its largest ideal of definition, \cite{ega0}[7.1.7], then the following are equivalent,
\begin{enumerate}
\item[(1)] $A$ is, topologically, $I$-adic.
\item[(2)] The graded ring $\underset{n \geq 0}{\coprod} I^n / I^{n+1}$ in its induced topology is discrete.
\item[(3)] Every ideal, $J$, is topologically finite generated, i.e. it is a topological quotient of $A^{\oplus m}$ for some $m \in {\mathbb Z}_{\geq 0}$.
\end{enumerate}
\end{fact}

\begin{proof}
(3) $\Rightarrow$ (1) By definition for any $n$, $I^n$ is a topological quotient of some $A^{\oplus m}$, thus $I^n / I^{n+1}$ is a topological quotient of $(A/I)^{\oplus m}$, but $I$ is open, so the latter topology is discrete.

\smallskip

(2) $\Rightarrow$ (1) By definition of induced discrete topology; for every $n$, $\exists$ an open $V_n$ such that,
\begin{equation}
\label{formal1}
I^n \cap V_n \subseteq I^{n+1} 
\end{equation}
so by induction on $n$, $I^n$ is open for any $n$.

\smallskip

(1) $\Rightarrow$ (3)  This is another rephrasing of the Artin-Rees lemma.
\end{proof}

In particular it follows that there is a mistake in EGA, to wit:

\begin{warning}\thlabel{formal:warn1}
The assertion of \cite{ega1}[10.6.5] that every Noetherian affine formal is adic is wrong. The cross reference to the non-trivial point in the proof to op. cit. 10.5.1 leads to op. cit. 10.3.6, which is circular since the latter supposes adic. A specific counterexample over a field $k$ is,
\begin{equation}
\label{formal2}
A := \frac{k[[y]][x]}{(x^2)}
\end{equation}
with a basis of neighbourhoods of zero given by the ideals,
\begin{equation}
\label{formal3}
J_n := (xy^n) \, , \quad n \geq 0 \, .
\end{equation}
As such: $f$ is topologically nilpotent iff it is nilpotent iff $x \mid f$, and, whence, $(x)$ is the ideal of definition, but the topology on $(x) = (x) / (x^2)$ is not discrete.
\end{warning}

This oversight notwithstanding our main flattening theorem, \thref{end:cor1}, employs Noetherianity endemically, and, while it might not be deadly, the failure of (3) in non-adic topologies is asking for problems that are best avoided, and whence we will adopt,

\begin{convention}\thlabel{formal:con1}
Unless stated otherwise any formal scheme, algebraic space, or champs which may occur in this article is to be supposed adic, i.e. it admits an atlas of the form,
\begin{equation}
\label{formal4}
\coprod_{\alpha} {\rm Spf} (A_{\alpha})
\end{equation}
where $A_{\alpha}$ is a Noetherian adic admissible ring. In particular it is $I=I_{\alpha}$ adic where $I_{\alpha}$ is the maximal ideal of definition of $A_{\alpha}$.
\end{convention}

Another, albeit unrelated, convention that we will adopt is,

\begin{convention}\thlabel{formal:con2}
Following \cite{ega0}[7.6.15], for $S$ a multiplicative subset of an admissible ring,
\begin{equation}
\label{formal5}
A_{\{S\}} = \varinjlim_{f \in S} A_{\{f\}} \, , \quad A \{ S^{-1} \} = \underset{f \in S}{\widehat{\varinjlim}} \, A_{\{f\}}
\end{equation}
wherein $A_{\{f\}}$ is the completion of $A_f$, and $\widehat{\lim}$ is the direct limit in the admissible category, so the latter is the completion of the former in \eqref{formal5}, cf. \cite{formal}[2.5]. Irrespectively if ${\mathfrak p}$, or $x$, or whatever, is an open prime and $S = A \backslash {\mathfrak p}$ then we'll put,
\begin{equation}
\label{formal6}
A_{\{{\mathfrak p}\}} = A_{\{x\}} := A_{\{S\}} \, , \quad A\{{\mathfrak p}\} = A\{x\} = A \{S^{-1}\} \, .
\end{equation}
\end{convention}

The same, or similar, notation is employed for the analogue of polynomials over a ring, i.e.

\begin{revision}\thlabel{formal:rev1}
The ring of $m$-dimensional restricted power series, $A \{T_1 , \cdots , T_m\}$, over an admissible ring is the completion of the polynomial ring $A[T_1 , \cdots , T_m]$ considered as a direct limit of finite free topological $A$-module, \cite{ega0}[7.5.1], while a continuous $A$-algebra, $A \to B$ is of finite type, 
\cite{ega1}[Proposition 10.13.1],
if, for some $m$, there is a topological factorisation,
\begin{equation}
\label{formal7}
\xymatrix{
A \ar[r] \ar[rd]&A \{T_1 , \cdots , T_m\} \ar[d] \\
&B
}
\end{equation}
In particular, therefore, 
the restricted power series are the global functions on $\mathbb{A}^m_{\mathbb{Z}}\hat{\times}\mathrm{Spf}(A)$,
while $B$ has the induced topology afforded by the arrow on the right of \eqref{formal7}.
\end{revision}

\subsection{Formal Henselisation}\label{SS:hensel}

The difference between the localisation $A_f$ and its formal variant $A_{\{f\}}$ notwithstanding, the formal spectra of the latter are a basis of the topology of ${\rm Spf} (A)$. As such any open subset of an affine formal scheme is algebraisable, and, mercifully, this agreeable situation continues in the \'etale topology, to wit:

\begin{fact}\thlabel{hensel:fact1}
Let $\pi : V \to S$ be a quasi-finite map of finite type, \thref{formal:rev1}, between Noetherian formal schemes such that at $x$ over $s$, $\pi$ is flat and un-ramified then for a suitably small Zariski open neighbourhood $V'$ of $x$ there is an open affine neighbourhood $S' = {\rm Spf} (A)$ of $s$, a finite $A$-scheme $X$ and a Zariski open subset $U$ of $X$ \'etale over $S$ such that $V'$ is the completion of $U$.
\end{fact}

\begin{proof}
We don't need adic, just admissible. In any case let $I$ be an ideal of definition, then modulo $I$, $\pi$ is just an \'etale map of schemes and this is described locally by \cite{sga1}[Th\'eor\`eme 7.6]. Specifically for a sufficiently small affinoid neighbourhood $S' = {\rm Spf} (A)$ of $s$, there is a monic polynomial, $f (T)$, such that, around $x$, $V \mod I$ is isomorphic to an open subset of the spectrum of,
\begin{equation}
\label{hensel1}
(A/I) [T] / f(T)
\end{equation}
where of course $f'(T)$ is non-zero $\mod I$ at $x$. Now plainly we can just lift the datum \eqref{hensel1} to get a finite $A$-algebra $X$ and a Zariski open subset $U$ in the same whose reduction $\mod I$ is isomorphic to $V$. This is, however, sufficient, \cite{sga1}[Th\'eor\`eme 8.3], to conclude that the completion of $U$ is isomorphic to a Zariski open neighbourhood of $x$.
\end{proof}

At the risk therefore of an abus de language let us make,

\begin{factdef}\thlabel{hensel:factdef1}
By a formally \'etale map of formal schemes on algebraic spaces is to be understood a quasi-finite, flat and un-ramified map of finite type, \thref{formal:rev1}. In particular if $X = {\rm Spf} (A)$ is an affine formal scheme, and $x : {\rm Spec} \, \overline k \to X$ a map from the separable closure of a point of $X$, the direct limit,
\begin{equation}
\label{hensel2}
A^h_{\{x\}} := \varinjlim_{x \in B} B
\end{equation}
taken over formally \'etale adically complete $B$ through which $x$ factors is a (strictly) Henselian local ring.
\end{factdef}

\begin{proof}
For any $A$-algebra $B$ in the limit \eqref{hensel2}, the point $x$ is the algebraic closure of the residue field of some open prime ideal ${\mathfrak p}$. By definition however $A \to B_{\{{\mathfrak p}\}}$, \thref{formal:con2}, belongs to the limit \eqref{hensel2}, so, without loss of generality, op. cit is a direct limit of local rings whose residue fields have separable closure $\overline k$. As such $A^h_{\{x\}}$ is a local ring which contains the strict Henselisation of $A_{\{x\}}$ by \thref{hensel:fact1}, so it's residue field is $\overline k$. Now let $C$ be a finite $A^h_{\{x\}}$ algebra then for a sufficiently fine $B$ in the limit \eqref{hensel2}, $C$ is actually a finite $B$ algebra and we require to prove,
\begin{equation}
\label{hensel3}
\pi_0 (C \otimes_B \overline k) \longrightarrow \pi_0 (C \otimes_B A^h_{\{x\}})
\end{equation}
is an isomorphism, wherein we identify the corresponding rings with their spectra. Such an isomorphism already holds, however, after tensoring with the strict Henselisation of $B$, which is contained in $A^h_{\{x\}}$.
\end{proof}

The satisfactory nature of \thref{hensel:factdef1} notwithstanding, we have the same ambiguity as \thref{formal:con2}, which in turn we extend to our current situation, i.e.

\begin{convention}\thlabel{hensel:con1}
Let $X = {\rm Spf}(A)$ be an adic Noetherian formal scheme, with $x : {\rm Spec} \, \overline k \to X$ the separable closure of a point then for $B$ as in \eqref{hensel2}, and $\widehat{\lim}$ as immediately post \eqref{formal5},
\begin{equation}
\label{hensel4}
A^h_{\{x\}} := \varinjlim_{x \in B} B \hookrightarrow A^h \{x\} := \underset{x \in B}{\widehat{\varinjlim}} \, B
\end{equation}
i.e. the latter is the $I$-adic completion of the former. As such $A^h \{x\}$ is a local ring, which is Henselian because $A^h_{\{x\}}$ is $\mod I$, and $\pi_0$ of $I$-adically complete rings is determined $\mod I$, whence the obvious variant of \eqref{hensel3}.
\end{convention}

Before concluding we may usefully make,

\begin{rmk}\thlabel{hensel:rmk1}
In the notations of \thref{hensel:con1}, if we confuse $x$ with its image in $X$ then in the notations of \thref{formal:con2}, $A^h_{\{x\}}$ of \eqref{hensel4} will, almost inevitably, be much larger than the Henselisation, $(A_{\{x\}})^h$, of \eqref{formal6}. If, however,  $x$ is a generic point of the trace then they're the same because \'etale neighbourhoods of generic points may be supposed finite of their image when computing generic Henselisation, cf. \eqref{hensel1} et seq.
\end{rmk}

\subsection{Quasi-finite morphisms}\label{SS:quasi}

Formally of finite type, \thref{formal:rev1}, is often just as useful
as finite type, for example:

\begin{fact}\thlabel{quasi:fact1}
Let $A\to B$ be a map of finite type between $I$-adic Noetherian
rings, such that $A/I\to B/I$ is finite, then $A\to B$ is finite.
\end{fact}
\begin{proof}
Let $t_i\in B$ be the images in $B$ of the $T_i$ of \eqref{formal7},
then by hypothesis there is a $d>0$ and polynomials $Q_i$ of degree
at most $d-1$ such that,
\begin{equation}\label{quasi1}
t_i^d \,=\, Q_i(t_i)\quad\mathrm{mod}\, (I)
\end{equation}
and whence the monomials,
\begin{equation}\label{quasi2}
T^D:=T_1^{d_1}\cdots T_n^{d_n},\quad 0\leq d_i< d
\end{equation}
generate $B/I$ over $A/I$. As such, a straightforward induction modulo $I^k$,
shows that the submodule $M$ of $A\{T_1,\hdots , T_m\}$ generated by the $T^D$
of \eqref{quasi2} is onto. Now, \eqref{formal7} is a topological surjection,
so the surjection $M\twoheadrightarrow B$ is also topological, while the induced topology on
$M$ is that of a finite topologically free $A$-module.
\end{proof}

It's simplicity notwithstanding, we should make,

\begin{rmk}\thlabel{rmk:quasi1}
The proof of the related $\mathfrak{m}$-adic \cite{ega2}[Proposition 6.2.5] is
wrong, since the cross referenced \cite{ega0}[7.4.3] comes from op. cit. 7.2.12
which is missing the sufficient finiteness condition of op. cit. corollaire 7.1.14.
On the other hand, these latter assertions are about modules, whereas for adic rings,
finite type, \thref{formal:rev1},  is enough.
\end{rmk}

In turn this affords the best possible local description of,

\begin{fact}\thlabel{fact:quasi2}
Let $f:X\to S$ be a map of finite type, \thref{formal:rev1}, between $I$-adic Noetherian
formal schemes which is quasi-finite, i.e. its trace is quasi finite, then 
for every $s\in S$ there is an \'etale neighbourhood $V\to S$ of $s$ such 
that the fibre $f:X\mid_V\to V$ decomposes as,
\begin{equation}\label{quasi3}
\overline{X}\coprod X'
\end{equation}
where $\bar{f}:=(f\mid_{\overline{X}}):\bar{X}\to V$ is finite, and $(f\mid_{X'})^{-1}(s)$ is empty.
\end{fact}
\begin{proof}
The corresponding proposition is true at the level of the trace, so
denoting the same by the subscript $0$, say $V_0\to S_0$ an \'etale
neighbourhood of $s$ such that \eqref{quasi3} holds with subscript 0.
On the other hand, \eqref{hensel1}, $V_0$ certainly lifts to $V\to S$,
while the connected components of any formal scheme are identically
those of the trace, so we have \eqref{quasi3} with the further proviso
that $\bar{f}_0$ is finite, so $\bar{f}$ is too 
by \thref{quasi:fact1}.
\end{proof}

This allows us to clear up any ambiguity in,
\begin{factdef}\thlabel{fact:quasi3}
Let $f:X\to S$ be a map of finite type, \thref{formal:rev1}, between adic Noetherian
formal schemes then for $X\leftarrow x\mapsto s\rightarrow S$ separably
closed points, we say that $f$ is net at $x$ if any of the following 
equivalent conditions are satisfied:

a) There are \'etale neighbourhoods $U\ni x$, resp. $V\ni s$ such
that $f:U\to V$ is a closed embedding.

b) The map $f^*:\mathscr{O}_{S,s}\to \mathscr{O}_{X,x}$
of strictly Henselian local rings, \eqref{hensel2}, is a topological
surjection.

c) The map $f^*:\mathscr{O}_{S,s}\to \mathscr{O}_{X,x}$
of strictly Henselian local rings, \eqref{hensel2}, is a
surjection.

d )The map $f^*:\mathscr{O}_{S}\{s\}\to \mathscr{O}_{X}\{x\}$
of complete strictly Henselian local rings, \eqref{hensel4}, is a
topological surjection.

e)  The map $f^*:\mathscr{O}_{S}\{s\}\to \mathscr{O}_{X}\{x\}$
of complete strictly Henselian local rings, \eqref{hensel4}, is a
surjection,

f) The trace of $f$ is net at $x$.

g) $x$ is isolated in the fibre $f^{-1}(s)$.

\end{factdef}
\begin{proof}
Plainly (a) implies everything, while all of (a)-(f) imply (g),
so it suffices to do (g) implies (a). However by \thref{fact:quasi2},
or more correctly a minor variant thereof, we may, without loss of
generality, suppose that $X\to S$ is finite, so, inter alia,
algebraisable, with $x,s$ embeddings, and whence, by upper semi-continuity
of the fibres, $X$ is a closed embedding at $x$.
\end{proof}

Needless to say, therefore, it's important to guarantee 
finite type, but this is easy, i.e.

\begin{revision} (cf. \cite{ega1}[Proposition 10.13.1])\thlabel{fact:quasi4}
Let $A$ be complete $I$ adic Noetherian; $B$ an admissible
$A$-algebra with a countable basis of zero such that,
\begin{equation}\label{quasi6}
\begin{split}
A=&\varprojlim_n A_n=A/I^{n+1}\rightarrow \varprojlim_n B_n=B,\,\text{$B_n$ discrete, $n\geq 0$}\\
0\to & B_N I^{n+1} \longrightarrow B_N\longrightarrow B_n\to 0, \, N\geq n,\,\text{is exact,}
\end{split}
\end{equation}
with $A_0\to B_0$ of finite type then
$A\to B$ has finite type in the sense of \thref{formal:rev1}.
\end{revision}
\begin{proof}
For $m\gg 0$, put $C=A[T_1,\hdots T_m]$, and
choose $t_i\in B$, $1\leq i\leq m$, so that we have a
surjection,
\begin{equation}\label{quasi33}
C\otimes_A A_0\twoheadrightarrow B_0: T_i\mapsto t_i.
\end{equation}
Now for every $n\geq 1$ we have a pair of exact sequences,
\begin{equation}\label{quasi4}
\begin{CD}
0@>>> CI^n/CI^{n+1} @>>> C\otimes_A A_n @>>> C\otimes_A A_{n-1}@>>>0 \\
@. @VVV @V{T_i\mapsto t_i}VV @VVV @. \\
0@>>> BI^n @>>> B_n @>>> B_{n-1}@>>>0 
\end{CD}
\end{equation}
wherein the leftmost vertical is surjective by \eqref{quasi4},
which is equally the starting point of proving that the
central vertical is surjective by induction on $n$. As such,
if we
consider the resulting Mittag-Leffler condition,
for the short exact sequences,
\begin{equation}\label{quasi5}
0\to K_n \to C/I^n \xrightarrow{T_i\mapsto t_i}  B/J^n\to 0
\end{equation}
then putting together \eqref{quasi5} with \eqref{quasi4},
affords (ML).
\end{proof}

\subsection{Incoherence of reduction}\label{SS:nil}

Observe that if $X$ is a quasi-compact locally Noetherian formal scheme, adic or otherwise,

\begin{triviality}\thlabel{triv:nil1}
There is a sheaf of ideals,
\begin{equation}
\label{nil1}
U \mapsto {\mathcal N} (U) = \mbox{nil radical of} \ {\mathcal O}_X (U) \, .
\end{equation}
\end{triviality}

\begin{proof}
By definition, if we have a cover $\underset{\alpha}{\coprod} \, U_{\alpha} \to U$ then nilpotents over $U_{\alpha}$ will glue to a function, $g$, on $X$ which is everywhere locally nilpotent, and since $U$ itself is quasi-compact under our hypothesis on $X$, $g$ is globally nilpotent.
\end{proof}

Nevertheless, unlike the case of schemes,

\begin{fact}\thlabel{fact:nil1}
The sheaf ${\mathcal N}$ of \thref{triv:nil1} need not be coherent on adic Noetherian formal schemes.
\end{fact}

We will construct an explicit counterexample following the ideas in \cite{nagata}[A1. Example 3]. As such recall,

\begin{revision}\thlabel{rev:nil1}
(op. cit (E3.1)--(E3.2)) Let $k$ be a field of characteristic $p \ne 0$ such that $(k:k^p)$ is infinite, and $t$ an indeterminate then,
\begin{equation}
\label{nil2}
R := k^p [[t]][k] \hookrightarrow R^* := k[[t]]
\end{equation}
is a  D.V.R. with completion $R^*$. In particular a formal series,
\begin{equation}
\label{nil3}
c(t) = \sum_{i=0}^{\infty} c_i \, t^i \, , \quad c_i \in k
\end{equation}
belongs to $R$ iff $k^p (c_i \mid 0 \leq i \leq \infty)$ is a finite extension of $k^p$. As such if we choose the coefficients so that this extension is infinite, then,
\begin{equation}
\label{nil4}
A := R[c] \xrightarrow{\sim} R[Y]/(Y^p-c^p)
\end{equation}
is a domain whose integral closure $\widetilde A$ in its field of functions $K$ is not a finite $A$-module.
\end{revision}

The proof in op.cit. that $\widetilde A$ over $A$ is infinite is non-constructive. We can, however, usefully observe the matter explicitly. To wit, for every $n \geq 0$ we can write,
\begin{equation}
\label{nil5}
c(t) = g_n (t) + t^n \gamma_n (t)
\end{equation}
where $g_n$ is a polynomial of degree at most $n$, and $\gamma_n (0) = c_n$, so:
\begin{equation}
\label{nil6}
\gamma_n (t) \in K \qquad \mbox{and} \qquad \gamma_n (t)^p = \sum_{j \geq 0} c_{j+n}^p \, t^{pj} \in R \subseteq A \, .
\end{equation}
Better still if $r \in R$ such that $r \gamma_n \in A$ then $t^n \vert \, r$ in $R$. Indeed by \eqref{nil4}, $A$ is a free $R$ module on the basis $c^i$, $0 \leq i \leq p-1$, so an inspection of the coefficient of $c$ in \eqref{nil5} suffices.

\smallskip

Now to get  \thref{fact:nil1} let $d_n$ be a sequence going to infinity with a rate of growth to be determined, and consider the formal series,
\begin{equation}
\label{nil7}
\Gamma (s) := \sum_{n=0}^{\infty} \gamma_{d_n} (t) s^n \in K[[s]] \cap R^* [[s]] 
\end{equation}
in an indeterminate $s$. We assert,

\begin{claim}\thlabel{claim:nil1}
If $L$ is the quotient field of $A[[s]]$,
\begin{equation}
\label{nil8}
X^p - \Gamma (s)^p \in R[[s]][X]
\end{equation}
is an irreducible polynomial in $L[X]$ provided that for every $N > 0$, the sequence
\begin{equation}
\label{nil9}
n^{-N} d_n
\end{equation}
restricted to $n$ a $p^{\rm th}$ power is unbounded.
\end{claim}

\begin{proof}
Following \cite{nish}[5.2.4], suppose otherwise, and write,
\begin{equation}
\label{nil10}
\Gamma(s) = \frac{b (s)}{a(s)}
\end{equation}
where, without loss of generality, all of $\Gamma (0)$, $b(0)$, and $a(0)$ are non-zero. As such from,
\begin{equation}
\label{nil11}
\sum_{i+j=k} a_i \, \gamma_{d_j} = b_k
\end{equation}
for $a_j$, resp. $b_j$, the coefficients of $a(s)$, resp. $b (s)$, so that by induction
\begin{equation}
\label{nil12}
a(0)^n \gamma_{d_n} \in A \, , \quad \forall \, n \geq 0
\end{equation}
and whence, as remarked immediately post \eqref{nil6}, $t^{d_n}$ divides $a(0)^n$ in $R$ if $n$ is a $p^{\rm th}$ power.
\end{proof}

Better still, continuing to follow \cite{nish}, but now at 5.2.2,

\begin{claim}\thlabel{claim:nil2}
The element $X^p - \Gamma (s)^p$ under the hypothesis of \thref{claim:nil1} generates the prime ideal, $P$, defined by,
\begin{equation}
\label{nil13}
0 \to P \to A[[s]][X] \to \frac{L[X]}{X^p - \Gamma^p} \, .
\end{equation}
\end{claim}

\begin{proof}
As a subset of $R^* [[s]]$, $P$ is equally,
\begin{equation}
\label{nil14}
(X-\Gamma (s)) \cap A[[s]][X]
\end{equation}
so $P^p \subseteq (X^p - \Gamma (s)^p)$, and whence $P$ is the only minimal prime of $(X^p - \Gamma (s)^p)$, and is even the only associated prime since $A$ is Cohen Macaulay. The localisation,
\begin{equation}
\label{nil15}
A[[s]][X]_P
\end{equation}
is, however, a localisation of $L[X]$ by \thref{claim:nil1}, so, by op. cit. $X^p - \Gamma (s)^p$ is prime.
\end{proof}

From which it follows,

\begin{cor}\thlabel{cor:nil1}
Under the hypothesis of \thref{claim:nil1} the domain, in the notation of \thref{claim:nil2},
\begin{equation}
\label{nil16}
{\mathcal O} := A[[s]][X]/P
\end{equation}
is a finite $A[[s]]$ module, and whence ${\mathcal O} = \underset{n}{\varprojlim} \, {\mathcal O}/(s^n)$ is an adically complete Noetherian domain.
\end{cor}

At which point we may easily conclude,

\begin{cor}\thlabel{cor:nil2}
On the affine formal scheme ${\rm Spf} {\mathcal O}$ (in the $s$-adic topology) the nil radical, ${\mathcal N}$, of \eqref{nil1} is not coherent.
\end{cor}

\begin{proof}
If ${\mathcal N}$ were coherent then its zero by \thref{cor:nil1}, while by \eqref{nil5},
\begin{equation}
\label{nil17}
\gamma_n (t) = \frac{c - g_n (t)}{t^n} \in A_t \, , \quad n \geq 0
\end{equation}
and no element of $A$ is topologically nilpotent in the $s$-adic topology of ${\mathcal O}$, so we have a formal open affinoid,
\begin{equation}
\label{nil18}
{\rm Spf} {\mathcal O} \{t^{-1}\} \hookrightarrow {\rm Spf} \{{\mathcal O}\} \, .
\end{equation}
The former ring is, however,
\begin{equation}
\label{nil19}
\frac{A_t [[s]][X]}{(X^p - \Gamma (s))^p}
\end{equation}
which is not reduced.%STOP
\end{proof}

\subsection{Consistency}\label{SS:con}

Plainly we need to put hypothesis on the formal category in order to avoid un-geometric phenomenon such as \thref{fact:nil1}, and to this end we propose,  

\begin{defn}\thlabel{con:defn1}
Let $X$ be a formal algebraic space or Deligne Mumford champ, and $x$ a separably closed point of $X$ with local ring ${\mathcal O}_{X,x}$ (i.e. $A_{\{x\}}^h$ in the affine case of \thref{hensel:con1}) then we say $X$ is consistent at $x$ if for every (separably closed) generisation $\eta \leadsto x$ which we identify with the prime ideal of ${\mathcal O}_{X,x}$ it defines, every fibre of,
\begin{equation}
\label{con1}
({\mathcal O}_{X,x})_{\eta} \longrightarrow {\mathcal O}_{X,\eta}
\end{equation}
is a geometrically regular ring. Similarly (and it's all we'll actually need) we'll say that $X$ is consistent in a Japanese way at $x$ if every fibre of \eqref{con1}, for every generisation, is geometrically $R_0$ and $S_1$, i.e. reduced. Plainly $X$ is consistent, resp. consistent in a Japanese way if it is so everywhere, 
and we say that $X$ is universally consistent, resp. universally consistent in a Japanese way, 
if every $X'/X$ of finite type, \thref{formal:rev1}, is consistent, resp. consistent
in a Japanese way.
\end{defn}

Before preceding we may usefully give some,

\begin{examples}\thlabel{con:ex1}
If the topology is discrete, i.e. an algebraic space or champ in the usual sense, then the right hand side of \eqref{con1} is the strict Henselisation of the left and $X$ is a lot better than consistent since the geometric fibres are (separable) points.

\smallskip

In the same vein if all the (\'etale) local rings of $X$ are $G$-rings, e.g. excellent, then $X$ is consistent. Indeed by definition, in the diagram
\begin{equation}
\label{con2}
\xymatrix{
({\mathcal O}_{X,x})_{\eta} \ar[rr]^{\rm strict}_{\rm Hensel} &&({\mathcal O}_{X,x})_{\eta}^h \ar[r] \ar[rd] &{\mathcal O}_{X,\eta} \ar[d] \\
&&&\widehat{({\mathcal O}_{X,x})_{\eta}}
}
\end{equation}
where $\wedge$ is completion in $\eta$, the diagonal arrow is regular, so the composition of horizontals is too by \cite{matsumura}[Theorem 32.1].

\smallskip

Similarly if all the (\'etale) local rings are universally Japanese, then $X$ is consistent in a Japanese way -- just change the reference to 23.7 in the proof of op. cit. to 23.9.
\end{examples}

As such consistency covers all practical examples, and better still,

\begin{fact}\thlabel{con:fact1}
Let $X$ be a quasi-compact locally Noetherian formal algebraic space, or Deligne Mumford champ, which is consistent in a Japanese way then the sheaf ${\mathcal N}_X$ of nil radicals, \thref{triv:nil1}, is coherent.
\end{fact}

\begin{proof}
Denoting by the trace by the subscript $0$ we will prove by induction on $d$ that there exists an open $U^{(d)} \subseteq X$ with the following properties,

\begin{enumerate}
\item[(1)] The co-dimension of $Z_0^{(d)} := X_0 \backslash U_0^{(d)}$ is everywhere locally at least $d$.
\item[(2)] There are finitely many, depending on $d$, points $\eta_i \in U_0^{(d)}$ of co-dimension everywhere locally at most $d$ such that, for all $V \to U^{(d)}$ open,
\begin{equation}
\label{con3}
{\mathcal O}_X (V) \hookrightarrow \prod_{\eta_i \in V} {\mathcal O}_{X,\eta_i}
\end{equation}
wherein, and throughout, $\in$ is a shorthand for factors through.
\item[(3)] The sheaf ${\mathcal N}_{U^{(d)}}$ is coherent.
\end{enumerate}
Let $I$ be the ideal of $X_0$ and consider the cone,
\begin{equation}
\label{con4}
\pi : C = {\rm Spec} \coprod_{n \geq 0} I^n / I^{n+1} \to X_0
\end{equation}
then for $\eta_i$ the generic points of $X_0$ we can find an open $U_0 \subseteq X_0$ such that for any open $V$ in $U_0$,
\begin{equation}
\label{con5}
{\mathcal O}_C(V_0) \hookrightarrow \prod_{\eta_i \in V_0} {\mathcal O}_{C,\eta_i}
\end{equation}
which in turn implies that we have an injection,
\begin{equation}
\label{con6}
{\mathcal O}_X (V) \hookrightarrow \prod_{\eta_i \in V_0} {\mathcal O}_X \{ \eta_i \}
\end{equation}
for any open $V$ with trace in $U_0$, so \eqref{con2} holds a fortiori. Plainly, we can, on a sufficiently small open $U \to X$, find a coherent sheaf of ideals ${\mathcal M}$, whose image under \eqref{con3} is the nil radical of the right hand side, and since op. cit. is injective, ${\mathcal M} \hookrightarrow {\mathcal N}_U$. Conversely, ${\mathcal N}$ is sheaf, so ${\mathcal N} (V)$ maps to ${\mathcal M}$ under \eqref{con3}, whence, again by injectivity of op. cit., they're the same, and we've done the case $d=0$.

To go from $d+1$ to $d$ is similar. Specifically let $\eta_i$ be the points of co-dimension at most $d$ which we've already found in item (2), and let $\zeta_j$ be the generic points of $Z_0^{(d)}$. Now for each $j$, reason as above per \eqref{con4}--\eqref{con6} to find open neighbourhoods $V_j$ of each $\zeta_j$ and a coherent sheaf of ideals ${\mathcal M}_j \hookrightarrow {\mathcal N}_{V_j}$ whose image in,
\begin{equation}
\label{con7}
V \longmapsto \prod_{\zeta_j \in V} {\mathcal O}_{X,\zeta_j}
\end{equation}
is the nil radical. In particular, therefore, ${\mathcal M}_j$ generates the nil radical of each,
\begin{equation}
\label{con8}
({\mathcal O}_{X,\zeta_j})_{\eta_i}
\end{equation}
wherein we confuse $\eta_i$ with the prime ideal it defines in ${\mathcal O}_{X,\zeta_j}$. By hypothesis, however, $X$ is consistent in a Japanese way, so ${\mathcal M}_j$ equally generates the nil radical of ${\mathcal O}_{X,\eta_i}$ whenever $\eta_i \leadsto \zeta_j$, so by yet another application of the injectivity of \eqref{con3}, ${\mathcal M}_j$ is equal to ${\mathcal N}$ on $V_j \times_X U^{(d)}$, and whence ${\mathcal M}_j = {\mathcal N}_{V_j}$ is coherent on $V_j$ not just in the Zariski, but also \'etale topology because the consistency condition is \'etale local.
\end{proof}

To which we have a useful,

\begin{cor}\thlabel{con:cor1}
Let $\pi : X \to S$ be a smooth map of $I$-adic affine Noetherian formal schemes (i.e. $\pi$ is of finite type, \thref{formal:rev1}, and modulo $I^{n+1}$ each $\pi_n$ is smooth) then ${\mathcal N}_X = \pi^* {\mathcal N}_S$, so, in particular, if $S$ is consistent in a Japanese way then ${\mathcal N}_X$ is coherent.
\end{cor}

In so much as the topology isn't discrete, we need the obvious,

\begin{lem}\thlabel{con:lem1}
Let $\pi : X \to S$ be as above in \thref{con:cor1}, with $s : {\rm Spec} \, \overline k \to S$ a separably closed point and $x \in X_s (\overline k)$ then there is a formally \'etale, \eqref{hensel:factdef1}, $V \to S$ enjoying a section of $\pi_Y : X_V = X \times_S Y \to V$.
\end{lem}

\begin{proof}
By \cite{sga1}[Th\'eor\`eme 8.5] there is such a neighbourhood $V$ such that the reduction mod $I$, $V_0$, enjoys a section $\sigma_0$ through $x=x_0$. As such proceeding by induction on $n$, the obstruction, cf. \thref{alg:claim0}, to lifting $\sigma_n$ to a section $\sigma_{n+1}$ lies in,
\begin{equation}
\label{con9}
{\rm Ext}_X^1 (\Omega^1_{X/S} , I^{n+1})
\end{equation}
which is zero since $X$ is affine and $\Omega_{X/S}$ is a bundle, so we get a section $\underset{n}{\varprojlim} \, \pi_n$ over our $I$-adically complete $V$.
\end{proof}

We can therefore give,

\begin{proof}[Proof of \thref{con:cor1}] 
Passing to the limit, the section defines a quotient rendering commutative,
\begin{equation}
\label{con10}
\xymatrix{
{\mathcal O}_{S,s} \ar[r] \ar[rd]_{\rm id} &{\mathcal O}_{X,x} \ar[d] \\
&{\mathcal O}_{S,s}
}
\end{equation}
Consequently if the section is cut out by functions $T_1 , \cdots , T_d$ then completing in the same we get a commutative diagram,
\begin{equation}
\label{con11}
\xymatrix{
{\mathcal O}_{X,x} \ar@{^{(}->}[r] &{\mathcal O}_{S,s} [[T_1 , \cdots , T_d]] \\
{\mathcal O}_{S,s} \ar[u] \ar[ru]
}
\end{equation}
in which the nil radical of the biggest ring is generated by the smallest, and the horizontal arrow is flat so ${\mathcal N}_{X,x} = (\pi^* {\mathcal N}_S)_x$, is coherent as soon as $S$ is consistent in a Japanese way by \thref{con:fact1}.
\end{proof}

Unsurprisingly, this is wholly pertinent to the
global behaviour of primary decomposition, to wit:

\begin{fact}\thlabel{flatG:fact66}
Let $\mathscr{M}$ be a coherent sheaf of modules on a formal champ in the smooth
topology which is consistent in a Japanese way, then there is a filtration,
\begin{equation}\label{flatG.21}
\mathscr{M}=\mathscr{M}^0\supset \mathscr{M}^1\supset \cdots \mathscr{M}^n\supset \mathscr{M}^{n+1}=0
\end{equation}
such that everywhere locally: every  associated prime of $\mathscr{M}^i/\mathscr{M}^{i+1}$ is minimal amongst
the same, which in turn define the support of $\mathscr{M}^i$, while their union over $i$ is the 
set of associated primes of $\mathscr{M}$.
\end{fact}
\begin{proof}
Consider first the case of a Noetherian ring $A$ with $\mathscr{M}$ a finite $A$-module $M$ 
enjoying minimal associated primes $\mathfrak{p}_i$, $1\leq i\leq r$, then we have a submodule
defined by the exact sequence,
\begin{equation}\label{ass1}
0\rightarrow M'\rightarrow M\twoheadrightarrow M'' \hookrightarrow \prod_i M_{\mathfrak{p}_i}.
\end{equation}
In particular for any $j$, $M'_{\mathfrak{p}_j}=0$, so no $\mathfrak{p}_j$
is an associated prime of $M'$. Similarly if $\mathfrak{q}\in \mathrm{Ass}(M'')$,
then, from the rightmost inclusion in \eqref{ass1}, there exists $i$ such that
$\mathfrak{q}\subseteq \mathfrak{p}_i$, so, \cite{matsumura}[Theorem 6.3], 
$\mathrm{Ass}(M')$, resp.
$\mathrm{Ass}(M'')$, are the non-minimal, resp. minimal, elements of $\mathrm{Ass}(M)$,
and we've
done the affine scheme case by induction. 
To do the affine adic case, however, we have to ensure that the formal
localisation, $M\to M_{\{S\}}$,  cf. \eqref{formal5}, for a finite set $S$,
preserves the filtration \eqref{flatG.21}, albeit, since completion is
faithfully flat around the trace, it will suffice to do the case $n=0$.
To this end, observe that \cite{matsumura}[Theorem 6.4] admits a small
generalisation, to wit:
\begin{claim}\thlabel{flatG:claim66}
If $A$ is a Noetherian ring, and $M$ a finite module with $n=0$ in
\eqref{flatG.21}, then, non-canonically, $M$ admits a filtration with
successive quotients of the form $A/\mathfrak{p}$, $\mathfrak{p}\in\mathrm{Ass}(M)$.
\end{claim}
\begin{proof}
Start as in op. cit., i.e. choose a sub-module $N\xrightarrow{\sim} A/\mathfrak{p}$,
for some $\mathfrak{p}\in\mathrm{Ass}(M)$, but saturate it, i.e. define $M'$ to be
the fibre,
\begin{equation}\label{ass2}
\begin{CD}
\, @. 0 @. 0 \\
@. @VVV @VVV \\
0@>>> M'@>>> N_{\mathfrak{p}} \\
@. @VVV @VVV \\
0@>>> M @>>> \prod_{\mathfrak{q}} M_{\mathfrak{q}} 
\end{CD}
\end{equation}
so, exactly as above, the quotient, $M''$, of the leftmost row in \eqref{ass2}
still has $n=0$, with $\mathrm{Ass}(M'')\subseteq \mathrm{Ass}(M)$, and, whence,
we conclude by Noetherian induction.
\end{proof}
Now, if $A$ is universally consistent in a Japanese way, then, in the above notation,
$A_{S}/\mathfrak{p}$ is reduced, so, from \thref{flatG:claim66} and \cite{matsumura}[Theorem 6.3],
every associated prime of $M_{\{S\}}$ is a minimal prime over $\mathfrak{p}_{\{S\}}$,
for some $\mathfrak{p}\in\mathrm{Ass}(M)$ as soon as $n=0$ in \eqref{flatG.21}.
After which,  the rest is easy: again it suffices to check that the $n=0$ case is preserved 
by base change, which if
$A\rightarrow B$
were \'etale in the discrete topology, is clear
by \thref{flatG:claim66}, so, the general adic
\'etale case follows from what we've just done
and \thref{hensel:fact1}, while
a general smooth base change reduces to 
a restricted power series ring,
\thref{formal:rev1}, over a sufficiently small 
\'etale neighbourhood.
\end{proof}

Before concluding we may usefully make,

\begin{rmk}\thlabel{rmk:jap1}
When in addition to the hypothesis of \thref{flatG:fact66} we can, functorially,
exclude a subset of minimal primes, we can take the right hand side of \eqref{ass1}
to be all associated primes not containing an excluded one, so that the associated
primes of the resulting $\mathscr{M'}\hookrightarrow \mathscr{M}$ are exactly those
lying over an excluded one, while the resulting quotient admits the filtration of 
\eqref{flatG.21}, but now for the remaining associated primes.
\end{rmk}

\subsection{Remarks on flatness}\label{SS:flatR}

Consistent with \thref{formal:con2} and \thref{hensel:con1} there is an a priori ambiguity in the definition of flatness in the formal category. Mercifully, however, under natural hypothesis appropriate to our current considerations, all ambiguity disappears, i.e.

\begin{fact}\thlabel{fact:flatR1} 
Let $A$ be an $I$-adically complete Noetherian ring, and $M$ an $I$-adically complete $A$-module, i.e. $M = \underset{n}{\varprojlim} \, M / I^n M$,  then $M$ is $A$-flat iff the functor $M \widehat{\otimes}_A$ (topological tensor-product) is exact.
\end{fact}

\begin{proof}
On $A_n = A/I^{n+1}$ modules, $\widehat{\otimes}_A$ and $\otimes_A$ coincide so if $\widehat{\otimes}_A$ is exact, $M_n = M/I^{n+1} M$ is flat over $A_n$ and whence $M$ is $A$-flat by \cite{matsumura}[Theorem 22.3].

Similarly if $M$ is $A$-flat, then $M_n$ is $A_n$ flat. Further, \cite{formal}[Claim 2.6], by definition, short exact sequences of topological $A$-modules always satisfy Artin-Rees, i.e. they're equivalent to a system of short exact sequences of $A_n$ modules,
\begin{equation}
\label{flatR1}
0 \to N'_n \to N_n \to N''_n \to 0
\end{equation}
wherein the transition maps from $n+1$ to $n$ are surjective. In particular therefore,
\begin{equation}
\label{flatR2}
0 \to M_n \otimes_{A_n} N'_n \to M_n \otimes_{A_n} N_n \to M_n \otimes_{A_n} N''_n \to 0
\end{equation}
is an exact inverse system with surjective transition maps, whose limit, $M \widehat{\otimes}_A$, is topologically exact.
\end{proof}

This said the only issue that could have arisen was on infinite $A$-modules since,

\begin{rmk}\thlabel{fact:flatR2}
By Artin-Rees finitely generated $A$-modules are topologically finitely presented, so $M \widehat{\otimes}_A$ and $M \otimes_A$ always coincide on the same even if $M$ weren't adic.
\end{rmk}

Now certainly flat modules are torsion free, but conversely,

\begin{factdef}\thlabel{factdef:flatR1}
Let $M$ be a module over a Noetherian ring $A$ then $M$ is universally torsion free, i.e. $M \otimes_A A''$ is $A''$ torsion free for all quotients $A \twoheadrightarrow A''$, iff $M$ is $A$-flat.
\end{factdef}

\begin{proof}
Flat implies universally torsion free is clear. Conversely, starting with the case that $A$ is Artinian then, cf. post \eqref{flatP.6}, there is a largest quotient $A''$ over, which $M$ is flat. Now suppose $A \ne A''$, then the latter is the quotient by a finitely generated ideal $(f_1 , \cdots , f_t) \ne 0$. In particular replacing $A$ by $A/(f_2 , \cdots , f_t)$ we may, without loss of generality suppose $t=1$. However by \cite{matsumura}[Theorem 22.3 (3)], $M$ is $A$-flat iff,
\begin{equation}
\label{flatR3}
M \otimes_A J \xrightarrow{ \ \sim \ } JM
\end{equation}
which, since $t=1$, is iff $M$ is $f_1$-torsion free.

We can, therefore, conclude by induction on the dimension. Indeed, without loss of generality $A$ is a local ring, and we've just done the case of dimension $0$, while to go from dimension $d$ to $d+1$ it suffices to choose a regular element $x$, so $M$ is $A$-flat iff it is $A/x$ flat and $x$-torsion free.
\end{proof}

\subsection{Purity and flatness}\label{SS:pure}

A scheme or algebraic space $X/S$ which isn't pure may well become formally pure after completion, i.e.

\begin{defn}\thlabel{A:formalpure}
Let ${\mathcal F}$ be a coherent module on a formal algebraic space or Deligne-Mumford champ of formally finite type, \thref{formal:rev1}, $X/S$ with trace $X_0/S_0$ then we say that ${\mathcal F}/S$ is formally pure if ${\mathcal F}_0 := {\mathcal F} \times_X X_0 / S_0$ is pure, \cite{ray}[Definition 3.3.3], and we say ${\mathcal F}/S$ is formally universally pure iff every base change (equivalently every base change of ${\mathcal F}_0/S_0$) is pure.
\end{defn}

While, \thref{flatG:rmk1}, there are plenty of flat modules which aren't pure,

\begin{fact}\thlabel{A:formalpureT}
Let everything be as in \thref{A:formalpure}, then in the sense of \thref{flatG:def2}, the non-flatness of ${\mathcal F}$ is pure.
\end{fact}

\begin{proof}
For $T/S$, let $Z(T) \hookrightarrow X_T = X \times_S T$ be the locus, \thref{flatG:bonus1}, where the base change isn't flat, and observe,
\begin{equation}
\label{pure1}
Z(S) \times_S T \hookleftarrow Z(T) \, .
\end{equation}
Consequently if the non-flatness of ${\mathcal F}$ weren't pure, then without loss of generality in establishing the absurd, there is a point $\sigma \in S$ specialising to $s$ such that for all generic points (of which there is at least one) $\zeta$ of $Z(s)$, $\overline{\zeta} \cap X_s = \varphi$.

This said we follow the logic of the proof of \thref{factdef:flatR1}, and aim to prove the absurd by induction on the dimension $d$ of $S$ at $\sigma$. In particular ${\mathcal O}_{S,\sigma}$ is Artinian if $d=0$, while by \eqref{pure1} we can always replace $S$ by something smaller, so, exactly as in op. cit. the flatifier is given by a principle ideal $J$, and $Z(S)$ is equally the support of the sub-module,
\begin{equation}
\label{pure2}
{\rm ann} (J) \hookrightarrow {\mathcal F} \otimes {\mathcal O}_{S,\sigma}
\end{equation}
as such every generic point $\zeta$ of $Z(S)$ belongs to $Ass ({\mathcal F} \otimes {\mathcal O}_{S,\sigma})$, so no $\overline\zeta \cap X_s$ is empty.

Now suppose the dimension is $d+1 > d \geq 0$, and, as per op. cit, choose a regular element $x \in {\mathcal O}_{S,\sigma}$. We thus have 2 cases,
\begin{enumerate}
\item[(1)] If $S' \hookrightarrow S$ denotes the closed sub-scheme $x=0$, and $Z(S') \ne \varphi$, then we're done by induction and \eqref{pure1}.
\item[(2)] As above, but $Z(S') = \varphi$, so again $Z(S)$ is the support of the annihilator in ${\mathcal F}$ of the principle ideal $(x)$, and we conclude as in the $d=0$ case.
\end{enumerate}
\end{proof}

\subsection{Formal modification}\label{SS:mod}

We follow \cite{artin}[\S1] almost to the letter, but apply the motivation of op. cit. Prop. 1.10 slightly differently, to wit:

\begin{defn}\thlabel{A:mod}
Let $\rho : {\mathcal X}' \to {\mathcal X}$ be a map of formal algebraic spaces of finite type, \thref{formal:rev1}, then we say that $\rho$ is a birational modification if there is a closed subspace ${\mathcal Z} \hookrightarrow {\mathcal X}'$ which contains no irreducible component of the latter such that, cf. \cite{artin}[Definition 1.7],
\begin{enumerate}
\item[(1)] The radicals of the Cramer, $C(\rho)$, and Jacobian ideals, $J(\rho)$, op. cit. 1.3--1.6, contain the ideal, ${\mathcal I}_Z$, defining $Z$.
\item[(2)] The ideal, ${\mathcal I}_{\Delta}$, of the diagonal ${\mathcal X}' \to {\mathcal X}' \times_{\mathcal X} {\mathcal X}'$ is annihilated by $p_1^* {\mathcal I}_Z^N$ for some $N \gg 0$.
\item[(3)] Any map $t : T = {\rm Spf} (R) \to {\mathcal X}$ of finite type at its closed point, wherein $R$ is a complete D.V.R. understood as an ${\mathfrak m}$-adic formal scheme lifts to ${\mathcal X}'$.
\end{enumerate}
\end{defn}

In order to clarify the relation with op. cit. Definition 1.7 let us make,

\begin{rmk}\thlabel{A:modRmk}
By op. cit. Prop 1.10, if $\rho$ were the completion of a map of finite type between algebraic spaces $r : X' \to X$ in an ideal $J$ of $X$, then $r$ is birational in an \'etale neighbourhood of the support of $J$ iff $\rho$ satisfies (1)--(3) of \thref{A:mod}. In the context of \cite{artin}, however, ${\mathcal Z}$ of \thref{A:mod} is always an algebraic space whose reduction is the trace of ${\mathcal X}'$. This latter condition is necessary in proving the main algebraisation theorem of op. cit., but isn't relevant to the definition of birational modification per se.
\end{rmk}

Finally for case of reference, we can usefully repeat the remark of op. cit. prior to 1.8 therein, i.e.

\begin{revision}\thlabel{A:artin}
The meaning of the conditions (1)--(3) of \thref{A:mod} are,

\begin{enumerate}
\item[(1)] $\varphi$ is \'etale off ${\mathcal Z}$.
\item[(2)] $\varphi$ is injective off ${\mathcal Z}$.
\item[(3)] $\rho$ is surjective.
\end{enumerate}
\end{revision}

\subsection{Formal blowing up}

Locally, for $A$, as it will be throughout this section, a complete $I$-adic Noetherian ring, 
this is just the $I$-adic completion of the blow up
of the affine scheme $\mathrm{Spec}(A)$. There are,
however, some pleasant peculiarities that merit further
discussion, beginning with,

\begin{factdef}\thlabel{blow:defn1}
By the formal affine cone over $S=\mathrm{Spf}(A)$, is to be 
understood $\mathbb{A}^n_S$ with the $\mathbb{G}_{m,S}$ action,
\begin{equation}\label{blow1}
\mathbb{A}^n_S\widehat{\times}_S\mathbb{G}_{m,S}:T_i\times\lambda\mapsto \lambda T_i
\end{equation}
or, equivalently, the ring of restricted power series
$A\{T_1,\hdots, T_m\}$ of \thref{formal:rev1} in the above
$A\{\lambda,\lambda^{-1}\}$ action, while by an affine formal
cone $C\to S$ is to be understood an $I$-adic $\mathbb{G}_{m,S}$ equivariant
formal sub-scheme. In particular, it must be defined by
finitely many homogeneous polynomials, so, equivalently
it's the $I$-adic completion of a cone (generated in degree $1$)
over $\mathrm{Spec}(A)$.
\end{factdef}

Now what's important about cones is that, like restricted
power series, they have simple topology, i.e.

\begin{factdef}\thlabel{blow:fact1}
Let $A\to B$ be the algebra of a cone, with $B_n$ its (necessarily
complete finite over $A$) homogeneous piece 
of degree $n$, then $B$ is the subspace of,
\begin{equation}\label{blow2}
\prod_{n\geq 0} B_n
\end{equation}
consisting of sequences $(b_n)$ such that for every $k\geq 0$,
$b_n\in I^kB_n$ for $n\gg 0$, depending on $k$.
\end{factdef}
\begin{proof}
\eqref{blow2} is exactly what one gets on completing $\coprod_n B_n$.
\end{proof}

Applying this to the case of blow ups, we have,

\begin{factdef}\thlabel{blow:fact3}
For $J$ an ideal of $A$, the blow up of $S$ in $J$ is the quotient
by $\mathbb{G}_{m,S}$ of the formal cone with $n$th graded piece $J^n$.
In particular, it is the $I$-adic completion of the blow up of $\mathrm{Spec}(A)$
in $J$, and if $S$ is universally consistent in a Japanese way,
\thref{con:defn1}, then any blow up  is reduced as soon as $A$ is.
\end{factdef} 
\begin{proof}
Rather than the blow up, consider first the
corresponding cone $C\to S$, then, by hypothesis, \thref{con:cor1}, both
of these have a coherent nil-radical, and since they're both affine, this
is nil, in either case as soon as $A$ is reduced by \thref{blow:fact1}. Better,
the affines $T_i\neq 0$, in the notation of \eqref{blow1}, are a
$\mathbb{G}_{m,S}$ equivariant cover of the cone without the zero
section, the global sections of each of which has a nil-radical
because $C$ (qua formal scheme) does. As such, the blow up is covered by reduced affinoids,
and $S$ is universally consistent in a Japanese way, so it's reduced.
\end{proof}

\subsection{Formal rank}\label{SS:rank}

A similar problem to that of \thref{A:mod} presents itself when discussing generic freeness, where we have,

\begin{defrev}\thlabel{rank:def1}
Let $A$ be a Noetherian ring, $M$ a finite $A$-module and $r \in {\mathbb Z}_{\geq 0}$ then the following are equivalent,
\begin{enumerate}
\item[(1)] For every minimal prime $x$ of $A$, $M_x \xrightarrow{ \, \sim \, } A_x^{\oplus r}$.
\item[(2)] There is a non-zero divisor $g \in A$, such that $M_g \xrightarrow{ \, \sim \, } A_g^{\oplus r}$.
\item[(3)] ${\rm Fit}_{r-1} (M) = 0$ and ${\rm Fit}_r (M)$ is not supported on a minimal prime.
\end{enumerate}
\end{defrev}

\begin{proof}
It's clear that (2) $\Rightarrow$ (3) $\Rightarrow$ (1), but to go backwards we need,

\begin{lem} \thlabel{rank:lem1}
Let $X_i$, $1 \leq i \leq n$, be the irreducible components of the spectrum, $X$ of a Noetherian ring $A$, and $Z \hookrightarrow X$ a nowhere dense closed set then the (radical) ideal of $Z$ contains a non-zero divisor.
\end{lem}

\begin{proof}
By induction on $n$, with the case $n=1$ being trivial. As such say $n \geq 2$ and $x_i$ non-zero divisors on 
\begin{equation}\label{rank1}
X^i := \bigcup_{j \ne i} X_j \ \mbox{vanishing on} \ Z^i := \bigcup_{j \ne i} (X_j \cap Z)
\end{equation}
then for $I( \ )$ the ideal in $X$ vanishing on whatever,
\begin{equation}\label{rank2}
I(Z^i) \supseteq (x_i) + I(X^i)
\end{equation}
so that, in particular,
\begin{equation}\label{rank3}
I(Z) \supseteq I(Z^1) \, I(Z^2) \supseteq ((x_1) + I(X^1))((x_2) + I(X^2)) \, .
\end{equation}
Now if everything in the right hand side is a zero divisor then $x_2 \mid_{X_2} = 0$ or $x_1 \mid_{X_1} = 0$. However we can always replace $x_i$ by $x_i + y_i$, where $y_i \mid_{X_i} \ne 0$ but $y_i \mid_{X_j} = 0$ for $j \ne i$, so, this is nonsense.
\end{proof}

Returning, therefore, to the proof of \thref{rank:def1}, (1) affords a closed subset $Z_x \underset{\ne}{\subset} \overline{\{x\}} \hookrightarrow {\rm Spec} (A)$ on whose complement $M$ is trivial while \thref{rank:lem1} affords a non-zero divisor, $g$, such that $D(g) \cap Z_x = \varphi$, $\forall \, x$.
\end{proof}

In consequence we therefore have,

\begin{cor}\thlabel{rank:cor1}
If a finite module satisfies the equivalent conditions of \thref{rank:def1} then so does any flat base change by a Noetherian ring.
\end{cor}

\begin{proof}
Being a non-zero divisor is stable under flat base change.
\end{proof}

However, what's better still is,

\begin{cor}\thlabel{rank:cor2}
If $M$ is a finite module over a Noetherian ring $A$ and $B$ a faithfully flat Noetherian $A$-algebra such that $M \otimes_A B$ satisfies the equivalent conditions of \thref{rank:def1} then so does $M$. In particular if $I$ is an ideal of $A$ and the completion $\widehat M / \widehat A$ satisfies the aforesaid conditions, then so does $M_x / A_x$ for every prime in the support of $I$.
\end{cor}

\begin{proof}
Given minimal primes ${\mathfrak p}_i$ of $A$ we have a finite map,
\begin{equation}
\label{rank4}
A \longrightarrow \prod_i A/{\mathfrak p}_i
\end{equation}
which stays finite after tensoring with $B$, so the good property is (3) of \thref{rank:def1}, i.e. if ${\rm Fit}_{r} (M)$ were supported on a component of $A$, it would be supported on a component of $B$.
\end{proof}

We have, therefore, the following largely satisfactory,

\begin{factdef}\thlabel{rank:fact1}
Let ${\mathscr F}$ be a coherent sheaf on a formal algebraic space ${\mathfrak X}$ then the following are equivalent,
\begin{enumerate}
\item[(A)] For every étale cover $U \to {\mathfrak X}$ by affinoids, $\Gamma (U,{\mathscr F}) / \Gamma (U, {\mathscr O}_{\mathfrak X})$ satisfies the equivalent conditions of \thref{rank:def1}.
\item[(B)] There is some étale cover $U \to {\mathfrak X}$ by affinoids such that $\Gamma (U,{\mathscr F})/\Gamma(U,{\mathscr O}_{\mathfrak X})$ satisfies the equivalent conditions of \thref{rank:def1}.
\item[(C)] For every separably closed point $x$ of the trace, the ${\mathfrak m} (x)$-adic completion of ${\mathscr F}_x / {\mathscr O}_{{\mathfrak X},x}$ satisfies the equivalent conditions of \thref{rank:def1}.
\end{enumerate}
and should any of these equivalent conditions occur, we say that ${\mathscr F}$ is generically free of rank $r$.
\end{factdef}

\begin{proof}
(A) $\Rightarrow$ (B) is clear, (B) $\Rightarrow$ (C) by \thref{rank:cor1}, while for any $U \to {\mathfrak X}$ (C) implies, by \thref{rank:cor1} \& \thref{rank:cor2}, that ${\rm Fit}_{r-1} \Gamma (U , {\mathscr F})$, resp. ${\rm Fit}_r (U , {\mathscr F})$ is zero, resp. without support in a component, after localising at every open prime, and since $\Gamma (U,{\mathscr O}_{\mathfrak X})$ is $I$-adically complete these respective conditions hold everywhere.
\end{proof}

Before progressing let us observe,

\begin{bonus}\thlabel{rank:bonus1}
If ${\mathfrak X}$ is reduced and globally irreducible then the equivalent conditions of \thref{rank:fact1} hold iff for some generic point, $\xi$, of the trace they hold for the $I$-adic (equivalently ${\mathfrak m}(\xi)$) completion of ${\mathscr F}_{\xi} / {\mathscr O}_{{\mathfrak X} , \xi}$, where, indeed they do hold for some $r$.
\end{bonus}

To address this we'll need another,

\begin{lem}\thlabel{rank:lem2}
If ${\mathfrak X}$ is a globally irreducible formal algebraic space and ${\mathfrak Y} \hookrightarrow {\mathfrak X}$ a closed subspace such that at some point $x$, ${\mathfrak Y}$ contains an irreducible of ${\mathfrak X}$ then ${\mathfrak X} = {\mathfrak Y}$.
\end{lem}

\begin{proof}
Exactly as in the proof of \eqref{rank4} we identify reducibility with a finite modification, \thref{A:mod},
\begin{equation}
\label{rank5}
{\mathfrak X}' \coprod {\mathfrak X}'' \longrightarrow {\mathfrak X}
\end{equation}
for closed proper subspaces ${\mathfrak X}' , {\mathfrak X}'' \hookrightarrow {\mathfrak X}$. Quite generally, however, for $J$ an ideal in a reduced ring, $A$, we have an injection,
\begin{equation}
\label{rank55}
A \longhookrightarrow A/J \prod A/{\rm ann} (J)
\end{equation}
while for ${\mathscr I}$ the ideal sheaf of ${\mathfrak Y}$, the conditions of the theorem amounts to $({\mathscr I}_x)_{\mathfrak p} = 0$ for ${\mathfrak p}$ a minimal prime of ${\mathscr O}_{{\mathfrak X},x}$, so ${\rm ann} ({\mathscr I})_x \ne 0$, and whence defines a (possibly empty) closed subspace. However by \eqref{rank5} \& \eqref{rank55}, ${\mathscr I} = 0$ or ${\rm ann} ({\mathscr I}) =0$ everywhere.
\end{proof}

We are now in a position to prove,

\begin{proof}[Proof of \thref{rank:bonus1}]
By hypothesis ${\rm Fit}_{r-1} ({\mathscr F})$ is $0$ at $\xi$ so it's zero everywhere by \thref{rank:lem2}. Similarly ${\rm Fit}_r({\mathscr F})$ is non-zero at $\xi$, so it can't be identically zero on any component of any local ring. Better still we only need these hypothesis on one component through $\xi$ to arrive to the same conclusion.
\end{proof}

All of which, more or less, admits an infinitesimal interpretation, to wit:

\begin{defrev}\thlabel{rank:def2}
Let $M$ be a finite module over a (${\mathfrak m}$-adically) complete local ring with maximal ideal $A$, and $F^0$ a good, \cite{ega3}[0.13.7.7], fibration, i.e.
\begin{equation}
\label{alg9}
F^0 M = M \, , \ {\mathfrak m} \, F^p \subseteq F^{p+1} \, , \ \forall \, p \geq 0 \, , \ \mbox{and} \ \exists \, q \geq 0 \ni {\mathfrak m} \, F^p = F^{p+1} \, , \ p \geq q
\end{equation}
then, \cite{matsumura}[Theorem 13.4], its length as an $A_m = A / {\mathfrak m}^{m+1}$ module enjoys asymptotics,
\begin{equation}
\label{rank6}
\ell_{A_m} (M/F^m M) = a m^{d(M)} \left( 1+0 \left( \frac1m \right)\right)
\end{equation}
where $d(M)$ is the dimension of $M$, $a \ne 0$. As such the limit,
\begin{equation}
\label{rank7}
\widehat r_A (M) := \varinjlim_{m \to \infty} \, \frac{\ell_{A_m} (M/F^m M)}{\ell_{A_m} (A_m)}
\end{equation}
exists, and should $A$ be reduced and irreducible, or more generally has constant rank on each component, is exactly the rank at the generic point(s).
\end{defrev}

\subsection{Specialisation and cohomology}\label{SS:count}

These considerations can be applied to prove analogues of the upper semi-continuity and invariance of the Euler characteristic in situations where they don't a priori have sense. Specifically, consider,

\begin{setup}\thlabel{setup:count1}
Let $\pi : {\mathfrak X} \to S$ be a proper map of formal algebraic spaces, ${\mathscr F}$ a coherent sheaf on ${\mathfrak X}$, $s$ a point of $S$, and $\widehat\pi : \widehat{\mathfrak X} \to \widehat S$, $\widehat{\mathscr F}$ their completion in $s$, then, for $q \geq 0$ we define,
\begin{equation}
\label{count1}
\widehat h^q ({\mathfrak X},{\mathscr F}) := \widehat r_{{\mathscr O}_{\widehat S,s}} (R^q \, \widehat \pi_* \, {\mathscr F}) \, , \ \widehat X ({\mathfrak X},{\mathscr F}) = \sum_{q=0}^{\infty} (-1)^q \, \widehat h^q ({\mathfrak X},{\mathscr F}) \, .
\end{equation}
\end{setup}

Plainly we can immediately conclude from \thref{rank:def2},

\begin{fact}\thlabel{count:fact1}
If all $R^q \, \pi_* \, {\mathscr F}$ have generically constant rank $r^q$, \thref{rank:fact1}, e.g. $S$ reduced and globally irreducible, \thref{rank:bonus1}, then, $\widehat h^q ({\mathfrak X},{\mathscr F}) = r^q$.
\end{fact}

\begin{proof}
By the theorem on formal functions $R^q \, \widehat\pi_* \, {\mathscr F}$ is $(R^q \, \pi_* \, {\mathscr F}) \otimes_{{\mathscr O}_S} \widehat{\mathscr O}_{S,s}$, so item (C) of \thref{rank:fact1} applies.
\end{proof}

To fully profit from this observe,

\begin{lem}\thlabel{alg:lem4}
Let everything be as in \thref{setup:count1} with ${\mathscr F}$ a coherent sheaf on ${\mathfrak X}$, then, for all $i \geq 0$,
\begin{equation}
\label{alg31}
\widehat h^q ({\mathfrak X} , {\mathscr F}) = \lim_{n \to \infty} \ \frac{\ell_{A_m} ({\rm H}^i (X_m , {\mathscr F} \otimes_A A_m))}{\ell (A_m)} 
\end{equation}
for $A = \widehat{\mathscr O}_{S,s}$, and $X_m = \widehat{\mathfrak X} \times_{\widehat S} {\rm Spec} \, A/{\mathfrak m}^m =: A_m$.
\end{lem}

\begin{proof}
By \cite{ega3}[corollaire 4.7.1] the filtration defined by,
\begin{equation}
\label{alg31.new.new}
0 \longrightarrow F^{m+1} {\rm H}^i (X,{\mathscr F}) \longrightarrow {\rm H}^i (X,{\mathscr F}) \longrightarrow {\rm H}^i (X_m , {\mathscr F} \otimes_A A_m) \, , \quad i \geq 0
\end{equation}
is ${\mathfrak m}$-good, \thref{rank:def2}. As such if following \cite{ega3}[4.1.7.5] we introduce the quotients,
\begin{equation}
\label{alg32.new}
{\rm H}^i (X,{\mathscr F}) \longrightarrow {\rm H}^i (X_m , {\mathscr F} \otimes_A A_m) \longrightarrow Q_m \longrightarrow 0
\end{equation}
then our goal, \eqref{alg31}, is equivalent to,
\begin{equation}
\label{alg33.new}
\lim_{m \to \infty} \, \frac{\ell (Q_m)}{\ell (A_m)} = 0 \, .
\end{equation}
However by \cite{ega3}[4.1.7.15] the module,
\begin{equation}
\label{alg34.new}
\coprod_{m \geq 0} \, Q_m
\end{equation}
is finite over some $A_r$, $r \gg 0$, so, in fact, the $Q_m$ are eventually zero.
\end{proof}

The utility of this is that it makes $\widehat h^q$ easy to estimate, e.g.

\begin{fact}\thlabel{count:fact2}
Suppose ${\mathscr F}/S$ is flat then,
\begin{enumerate}
\item[(1)] $\widehat{\chi} ({\mathfrak X},{\mathscr F}) = \chi (X_0 , {\mathscr F}_0)$
\item[(2)] $\widehat h^q ({\mathfrak X},{\mathscr F}) \leq h^q (X_0 , {\mathscr F}_0)$.
\end{enumerate}
\end{fact}

\begin{proof}
For every $m \geq 0$, flatness gives an exact sequence,
$$
0 \longrightarrow {\mathscr F}_0 \otimes \frac{{\mathfrak m}^m}{{\mathfrak m}^{m+1}} \longrightarrow {\mathscr F} \otimes_A A_{m+1} \longrightarrow {\mathscr F} \otimes_A A_m \longrightarrow 0
$$
so (1) is immediate, while the long exact sequence in cohomology gives,
\begin{eqnarray}
\ell_A (H^q (X_{m+1} , {\mathscr F} \otimes_A A_{m+1})) &\leq &\ell_A (H^q (X_m , {\mathscr F} \otimes_A A_m)) \nonumber \\
&+ &\left( \dim_{k(s)} \, \frac{{\mathfrak m}^m}{{\mathfrak m}^{m+1}} \right) \cdot h^q (X_0 , {\mathscr F}_0)\nonumber
\end{eqnarray}
so summing over $m$ gives (2).
\end{proof}

%\end{document}

\vglue 1cm

\bibliography{elvis}{}
\bibliographystyle{Gamsalpha}

\end{document}